\definecolor{red}{rgb}{1,0,0}
\newtheorem{theorem}{Theorem}[section]
\newaliascnt{definition}{theorem}
\newtheorem{definition}[definition]{Definition}
\newaliascnt{conj}{theorem}
\newtheorem{conj}[conj]{Conjecture}
\newaliascnt{question}{theorem}
\newaliascnt{claim}{theorem}
\newtheorem{claim}[claim]{Claim}
\newaliascnt{fact}{theorem}
\newtheorem{fact}[fact]{Fact}
\newaliascnt{note}{theorem}
\newaliascnt{exercise}{theorem}
\newaliascnt{lemma}{theorem}
\newtheorem{lemma}[lemma]{Lemma}
\newaliascnt{prop}{theorem}
\newtheorem{prop}[prop]{Proposition}
\newaliascnt{corollary}{theorem}
\newtheorem{corollary}[corollary]{Corollary}
\newaliascnt{obser}{theorem}
\newaliascnt{remark}{theorem}
\newtheorem{remark}[remark]{Remark}
\newtheorem*{theorem*}{Theorem}
\newcommand{\RR}{\mathbb{R}}
\newcommand{\flags}{\usym{1F3F3}}
\NewDocumentCommand\transnormdual{ggg}
{%
    \ensuremath{\IfNoValueTF{#1}%
        {\|J\|_{(AQ)^\circ \to AQ}}%
        {\IfNoValueTF{#3}%
            {\|J\|_{(#1)^\circ \to #2}}%
            {\|#1\|_{(#2)^\circ \to #3}}}%
    }%
}
\NewDocumentCommand\faces{ggg}
{%
    \ensuremath{\IfNoValueTF{#1}%
        {\mathcal{F}}%
        {\mathcal{F}_{#1}}%
    }%
}
\newcommand{\innerp}[2]{\left\langle #1,#2 \right\rangle}
\newcommand{\setdef}[2]{\left\{ #1 \ \middle| \ #2 \right\}}
\DeclareMathOperator{\Ima}{Im}
\DeclareMathOperator{\relint}{relint}
\DeclareMathOperator{\vol}{vol}
\DeclareMathOperator{\proj}{proj}
\DeclareMathOperator{\aff}{aff}
\DeclareMathOperator{\lin}{lin}
\DeclareMathOperator{\supp}{supp}
\DeclareMathOperator{\sign}{sign}
\DeclarePairedDelimiter{\parens}()
\crefname{conj}{conjecture}{conjectures}
\crefname{theorem}{theorem}{theorems}
\crefname{definition}{definition}{definitions}
\crefname{claim}{claim}{claims}
\crefname{question}{question}{questions}
\crefname{fact}{fact}{facts}
\crefname{note}{note}{notes}
\crefname{exercise}{exercise}{exercises}
\crefname{lemma}{lemma}{lemmata}
\crefname{prop}{proposition}{propositions}
\crefname{corollary}{corollary}{corollaries}
\crefname{obser}{observation}{observations}
\newaliascnt{fig}{theorem}
\crefname{fig}{figure}{figures}
\newaliascnt{sec}{theorem}
\crefname{sec}{section}{sections}
\newcommand{\graph}[1]{{\mathcal{G}_{#1}}}
\newcommand{\signedflags}[1]{{\flags^{#1}}}
\newcommand{\phist}{{\Phi_{st}}}
\newcommand{\calc}{\mathcal{C}}
\title{Kalai's flag conjecture for locally anti-blocking polytopes}
\author{Arnon Chor}
\date{\today}
\begin{document}

\maketitle
\begin{abstract}
We prove Kalai's full flag conjecture for the class of locally anti-blocking polytopes, and show that there is equality if and only if the polytope is a (generalized) Hanner polytope.
\end{abstract}

\section{Introduction and main theorem}

For polytopes, central symmetry plays a central role in the combinatorial structure. For example, the Figiel-Lindenstrauss-Milman inequality \cite{10.1007/BF02392234} tells us that a centrally symmetric polytope cannot have few facets and also few vertices. But what about faces of other dimensions? The question of counting all faces of centrally symmetric polytopes is the subject of a famous conjecture by Kalai \cite{kalai1989number}.

\begin{conj}[$3^d$ conjecture]
    Any centrally symmetric $d$-dimensional polytope has at least $3^d$ non-empty faces, with equality if and only if the polytope is a linear image of a Hanner polytope.
\end{conj}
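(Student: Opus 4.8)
The statement above is Kalai's $3^d$ conjecture, still open in general; the plan I would follow --- together with an honest account of where it stalls --- is the following. The natural approach is induction on $d$ via the Euler--Poincar\'e relation. Fix a centrally symmetric $d$-polytope $P$ with $0$ in its interior and a facet $F$; then $-F$ is also a facet, $F$ and $-F$ are disjoint (a common point would force $0 \in F$), and sorting the nonempty faces of $P$ according to whether they lie in $F$, in $-F$, or in neither gives $s(P) = 2\,s(F) + e(P)$, where $s(\cdot)$ is the number of nonempty faces and $e(P)$ counts the ``equatorial'' faces in neither. Since $3^d = 3 \cdot 3^{d-1}$, the dream is to prove $s(F) \ge 3^{d-1}$ and $e(P) \ge 3^{d-1}$, closing the induction, and to track the slack for the equality case.

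The dream fails, and how it fails is exactly what keeps the conjecture open: a facet of a centrally symmetric polytope need not be centrally symmetric, so the inductive hypothesis does not apply to $F$ --- for the cross-polytope $C_d^{\ast}$ every facet is a $(d-1)$-simplex with only $2^d - 1 < 3^{d-1}$ nonempty faces (for $d \ge 3$), the deficit being absorbed by an unusually large equatorial part. One route that does go through, but only under extra hypotheses, is the homological one: when $P$ is simplicial (or dually simple) the quantity $s(P)$ is a linear function of the $h$-vector of $\partial P$, and Stanley's lower bound on the $g$-vector of a centrally symmetric simplicial polytope forces the $f$-vector of $P$ to dominate that of the cross-polytope coordinatewise, hence $s(P) \ge s(C_d^{\ast}) = 3^d$. \textbf{The obstacle to dropping simpliciality --- and the reason the general conjecture is open --- is that for a non-simplicial $P$ the number $s(P)$ is a flag-$f$-vector quantity not recorded by the toric $h$-vector}, and there is no Lefschetz-type machinery producing sharp flag-$f$-vector lower bounds for centrally symmetric polytopes: Karu's theorem gives nonnegativity of the $cd$-index of any polytope, but even knowing that the $cd$-index of $P$ dominates those of the $d$-dimensional Hanner polytopes is strictly weaker than the $3^d$ bound, since the passage from the $cd$-index to $s(P)$ loses too much. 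To push past this I would try to introduce the antipodal involution into Karu's hard-Lefschetz argument on the combinatorial intersection cohomology of the fan over $P$ --- splitting the intersection-cohomology sheaf into $\pm 1$-eigensheaves and seeking an equivariant hard Lefschetz statement forcing Hanner-sized ranks --- but the antipodal map is not an automorphism of any projective toric variety, so no geometric Lefschetz is available and the combinatorial equivariant version is unknown; this is the step I expect to remain stuck on.

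For the equality case the template, confirmed in every case where the conjecture is settled, is again induction on $d$: if $s(P) = 3^d$ then the bound can be tight only when some antipodal facet pair $F, -F$ has $F$ a linear image of a Hanner $(d-1)$-polytope with every equatorial face lying over $F$, so that $P$ is recognized either as the prism $F \times [-1,1]$ or, dually, as the bipyramid $\operatorname{conv}(F' \cup (-F'))$ over a Hanner facet $F'$ of $P^{\ast}$, and the inductive hypothesis applied to $F$ or $F'$ closes the loop; the converse is the elementary count $s(H_1 \times H_2) = s(H_1)\,s(H_2)$ with $s([-1,1]) = 3$. Finally, as for actually reaching \emph{all} centrally symmetric polytopes, my realistic expectation is that this remains out of reach, and the concrete program is the one the literature follows: the conjecture is a theorem for $d \le 4$, for simplicial and for simple polytopes, for polytopes with large symmetry groups, and --- as the present paper establishes --- for locally anti-blocking polytopes, where intersecting $P$ with the closed orthants exhibits it as a controlled gluing of anti-blocking pieces and makes both $s(P)$ and the equality reconstruction an explicit combinatorial computation. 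Enlarging the supply of such structured classes, while checking that the equality locus never escapes the Hanner family, is the plan; closing the gap to an arbitrary centrally symmetric $P$ is where I expect to get stuck.
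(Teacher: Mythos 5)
The statement you were given is stated in the paper as a \emph{conjecture}, not a theorem: the paper offers no proof of the $3^d$ conjecture and only cites it as motivation (the result actually proved there is the flag conjecture for locally anti-blocking polytopes, by a sign/fan induction unrelated to your sketch). You correctly recognize that the conjecture is open and decline to claim a proof, and your survey of the known partial results and obstructions (the failure of the naive facet induction on the cross-polytope, the simplicial case via central symmetry of the $h$/$g$-vector, the $cd$-index and the lack of an equivariant Lefschetz machinery, the locally anti-blocking case) is accurate; there is nothing in the paper to compare it against, so no further assessment is needed.
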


Hanner polytopes, the conjectured equality cases, are recursively defined to be either the symmetric segment $[-1,1]$ in $\RR^1$, the polar dual $P^\circ = \setdef{x}{\forall y \in P: \innerp{x}{y} \leq 1}$ of some previously defined Hanner polytope $P$, or the convex hull $P \vee Q$ of two previously defined lower-dimensional Hanner polytopes embedded in some two orthogonal subspaces. Intriguingly, Hanner polytopes are also the conjectured minimizers for the famous Mahler conjecture \cite{mahler1939ubertragungsprinzip}:

\begin{conj}[Mahler's conjecture]
    For any centrally symmetric $d$-dimensional convex body $K$,
    \[
        \vol(K) \cdot \vol(K^\circ) \geq \frac{4^d}{d!} ,
    \]
    with equality if and only if $K$ is a linear image of a Hanner polytope.
\end{conj}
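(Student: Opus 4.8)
The full conjecture is of course wide open, so the realistic target here --- and the one that matters for this paper --- is the case of a locally anti-blocking body $K$. Recall that $K$ is \emph{locally anti-blocking} if for every sign pattern $\epsilon\in\{\pm1\}^d$ the set $K^\epsilon$ obtained from $K\cap\epsilon\RR^d_{\geq 0}$ by taking coordinatewise absolute values is an anti-blocking (i.e.\ down-closed) convex body in $\RR^d_{\geq 0}$. The plan is to reduce, orthant by orthant, to Saint-Raymond's volume-product inequality for anti-blocking bodies.

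The first step is a duality and decomposition lemma: local anti-blocking is preserved under polarity, and for each $\epsilon$ one has $(K^\circ)^\epsilon=(K^\epsilon)^{*}$, where $A^{*}:=\setdef{y\in\RR^d_{\geq 0}}{\innerp{x}{y}\le 1\ \forall x\in A}$ denotes the anti-blocking polar. Indeed, for $w\geq 0$ the functional $x\mapsto\innerp{\epsilon x}{w}$ on $K$ attains its maximum on $K\cap\epsilon\RR^d_{\geq 0}$ by down-closedness, so the linear constraints cutting out $(K^\circ)^\epsilon$ are precisely those cutting out $(K^\epsilon)^{*}$. Combined with the elementary identity $\vol(K)=\sum_\epsilon\vol(K^\epsilon)$ (the sum over all $2^d$ sign patterns), and the same for $K^\circ$, this gives
\[
\vol(K)\cdot\vol(K^\circ)=\Bigl(\sum_\epsilon\vol(K^\epsilon)\Bigr)\Bigl(\sum_\epsilon\vol\bigl((K^\epsilon)^{*}\bigr)\Bigr).
\]
The second step applies Cauchy--Schwarz to the right-hand side, bounding it below by $\bigl(\sum_\epsilon\sqrt{\vol(K^\epsilon)\,\vol((K^\epsilon)^{*})}\bigr)^{2}$, and then invokes Saint-Raymond's inequality $\vol(A)\,\vol(A^{*})\geq 1/d!$, valid for every anti-blocking body $A\subset\RR^d_{\geq 0}$. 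With $2^d$ orthants this yields $\vol(K)\,\vol(K^\circ)\geq(2^d/\sqrt{d!})^{2}=4^d/d!$, the desired bound.

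The delicate part --- and the step I expect to be the main obstacle --- is the equality analysis. Equality forces simultaneously (i) tightness in Cauchy--Schwarz, which together with (ii) below forces $\vol(K^\epsilon)=\vol((K^\epsilon)^{*})$ for every $\epsilon$, and (ii) tightness in Saint-Raymond's inequality in each orthant, so that every $K^\epsilon$ is one of its extremal bodies --- the anti-blocking bodies built recursively from coordinate intervals by free sums and direct products, in the Meyer--Reisner description of the equality case. It then remains to show that a centrally symmetric locally anti-blocking polytope whose $2^d$ orthant pieces are all of this extremal form must itself be a generalized Hanner polytope: one has to verify that the recursive decompositions of adjacent pieces are compatible along the separating coordinate hyperplanes and can be threaded into a single global Hanner-type decomposition of $K$. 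This gluing and compatibility argument is exactly where the combinatorial machinery developed in this paper for the flag conjecture should enter, and I expect it --- rather than the inequalities --- to carry the weight of the proof.
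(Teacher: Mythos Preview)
The paper does not prove this statement. Mahler's conjecture is listed only as background (Conjecture~1.2), alongside the $3^d$ conjecture and Kalai's flag conjecture, to motivate why Hanner polytopes and the locally anti-blocking class are interesting. The paper's actual theorem is about Kalai's \emph{flag} conjecture for locally anti-blocking polytopes; Mahler is not touched, and the paper merely cites \cite{artstein2023geometric,saint1980volume} for the fact that the locally anti-blocking case of Mahler is already known. So there is no ``paper's own proof'' to compare your proposal against --- you have targeted the wrong statement.

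That said, your sketch of the inequality is essentially the known argument (orthant decomposition, $(K^\circ)^\epsilon=(K^\epsilon)^*$, Cauchy--Schwarz, then Saint-Raymond in each orthant), and would be a fair summary of what happens in the references the paper cites. Your expectation that ``the combinatorial machinery developed in this paper for the flag conjecture should enter'' the Mahler equality analysis, however, is unfounded: the paper's machinery --- flag signs, the flip lemma, the injections $\chi_C^D$ --- is built to count flags, not to control volume products, and nowhere in the paper is it connected to Mahler. A minor technical point: your equality claim (i), that Cauchy--Schwarz tightness together with Saint-Raymond tightness forces $\vol(K^\epsilon)=\vol((K^\epsilon)^*)$, is not quite right; what you actually get is that $\vol(K^\epsilon)$ and $\vol((K^\epsilon)^*)$ are each constant in $\epsilon$ with product $1/d!$, not that they are equal to one another.
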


A (full) flag of a polytope $P$ is a chain of faces of $P$, one in each dimension. A later conjecture, also attributed to Kalai in \cite{schmitt2012ten} has to do with flags of centrally symmetric polytopes.

\begin{conj}[Kalai's (full) flag conjecture]
    Any centrally symmetric $d$-dimensional polytope has at least $2^d \cdot d!$ flags, with equality if and only if the polytope is a linear image of a Hanner polytope.
\end{conj}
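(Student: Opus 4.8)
Following the abstract, the goal is to prove the conjecture for locally anti-blocking polytopes; so let $K\subseteq\RR^d$ be such a polytope, which after passing to the coordinate subspace it spans we may take full-dimensional, so that $0\in\inter K$ by central symmetry. The plan is an induction on $d$ resting on two facts. The first is the behaviour of flag numbers under the Hanner operations: if $P,Q$ are polytopes of dimensions $p,q$ placed in complementary coordinate subspaces, then interleaving maximal chains of the two face lattices gives
\[
\varphi(P\times Q)=\varphi(P\oplus Q)=\binom{p+q}{p}\,\varphi(P)\,\varphi(Q),
\]
where $\varphi(\cdot)$ is the number of complete flags. Since $\varphi([-a,a])=2$, induction yields $\varphi(H)=2^d\,d!$ for every $d$-dimensional generalized Hanner polytope $H$, and all such $H$ are visibly locally anti-blocking; this is the easy direction of the equality statement, so only the bound $\varphi(K)\ge 2^d\,d!$ and its rigidity remain.

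The second, harder, fact is a sharp section estimate. Because a locally anti-blocking polytope contains, with each point $x$, the whole coordinate box between $0$ and $x$, the section $K_i:=K\cap\{x_i=0\}$ coincides with the coordinate projection of $K$ and is again a centrally symmetric locally anti-blocking polytope in $\RR^{d-1}$. The claim is that
\[
\varphi(K)\ \ge\ 2d\cdot\varphi(K_i)\qquad\text{for every coordinate }i,
\]
which by induction (base case $d\le1$, $K$ a segment with $\varphi=2$) gives $\varphi(K)\ge 2d\cdot 2^{d-1}(d-1)!=2^d\,d!$. To prove the estimate I would expand $\varphi(K)=\sum_F\varphi(F)$ over the facets of $K$ and sort them by their relation to the $i$-th coordinate. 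The ``vertical'' facets --- those whose outer normal lies in $\{x_i=0\}$ --- are exactly the polytopes $K|_G:=\{x\in K:\pi_i(x)\in G\}$ for facets $G$ of $K_i$ with nondegenerate $x_i$-fibre; the remaining ``roof'' facets are the maximal linearity cells of the two concave piecewise-linear functions $f_+,f_-$ whose subgraphs form $K\cap\{\pm x_i\ge0\}$, and central symmetry pairs these up. It then suffices to prove (i) $\sum_G\varphi(K|_G)\ge 2(d-1)\,\varphi(K_i)$ and (ii) that the roof cells form a regular --- hence shellable --- subdivision of $K_i$, so that $\sum_R\varphi(R)\ge\varphi(K_i)$; adding these yields $\varphi(K)\ge\bigl(2(d-1)+2\bigr)\varphi(K_i)=2d\,\varphi(K_i)$.

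For the rigidity statement, suppose $\varphi(K)=2^d\,d!$. Then the section estimate is tight for every $i$, which should force each $f_\pm$ to be affine (trivial subdivision in (ii)) and each vertical facet extremal, so that up to a coordinate shear $K$ splits as $K_i\times[\text{segment}]$ or $K_i\oplus[\text{segment}]$; tightness also gives $\varphi(K_i)=2^{d-1}(d-1)!$, so by the inductive hypothesis $K_i$ is a generalized Hanner polytope, whence $K=K_i\star[\text{segment}]$ is one too.

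The main obstacle is estimate (i) together with its equality case. A vertical facet $K|_G$ is not centrally symmetric and its affine hull is not a coordinate subspace, so the inductive hypothesis does not apply to it --- indeed a pyramid-like $K|_G$ can by itself have fewer than $2(d-1)\,\varphi(G)$ flags, so (i) cannot be proved facet by facet and must be an aggregate statement. What makes it true is the anti-blocking structure, which I expect has to be fed in through the combinatorial (graph / stable-set-polytope) model of the anti-blocking pieces of $K$, counting the corresponding signed flags; the whole bound then becomes a combinatorial optimization whose only minimizers reassemble into generalized Hanner polytopes. A further technical nuisance, in both the inequality and the rigidity step, is the bookkeeping for degenerate fibres --- facets $G$ of $K_i$ over which $f_\pm$ vanishes identically, so that $G$ drops out of the list of facets of $K$ and its contribution must be recovered from the roof cells.
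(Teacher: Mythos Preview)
Your outline contains two concrete errors, not merely unfinished steps.

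\textbf{The split into (i) and (ii) is not just hard, it is false as stated.} Take $K=\lozenge^d$. Every facet normal lies in $\{-1,1\}^d$, so there are no vertical facets at all, and the left side of (i) is zero. You flag the degenerate-fibre bookkeeping as a nuisance, but for the cross-polytope \emph{every} facet of $K_i$ has degenerate fibre, so the entire quota $2(d-1)\varphi(K_i)$ would have to be recovered from the roof cells. Your (ii), namely $\sum_R\varphi(R)\ge\varphi(K_i)$ from shellability, is far too weak for that: you would need $\sum_R\varphi(R)\ge d\,\varphi(K_i)$ on each side, and nothing in the subdivision picture supplies that factor of $d$. So the decomposition $2(d-1)+2$ does not reflect the geometry and cannot be rescued by bookkeeping alone.

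\textbf{The rigidity step is wrong.} Take $K=\square^2\times\lozenge^2$ and $i=4$. Then $K_i=\square^3$, but $K$ is neither $\square^3\times[-1,1]=\square^4$ nor $\square^3\oplus[-1,1]$ (a bipyramid with ten vertices, while $K$ has sixteen). The roof function $f_+(x_1,x_2,x_3)=1-|x_3|$ is genuinely piecewise affine, so equality in your section estimate does \emph{not} force a trivial subdivision. A Hanner polytope splits off a segment only along certain coordinates (those appearing last in its recursive construction), not along every $e_i$; an argument that tries to peel off an arbitrary coordinate cannot close the equality case.

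The paper's route is entirely different and bypasses both problems. To each flag $F$ of $P$ it assigns a \emph{sign}: the minimal orthant meeting every $\relint F_i$, which is always full-dimensional. The core is a system of injections $\chi_C^D$ from flags of $P\cap\lin C$ with sign $C$ to flags of $P\cap\lin D$ with sign $D$, one for each facet $C$ of each cone $D$ in the standard fan, with pairwise disjoint images; disjointness comes from a ``flip lemma'' (there is a unique edge through $F_0$ along which a given flag projects to a flag). Induction on the cone gives at least $(\dim D)!$ flags per orthant, hence $2^d\cdot d!$ in total. What this actually proves at the section level is $\varphi(K)\ge 2\sum_{i}\varphi(K_i)$, not your per-coordinate $\varphi(K)\ge 2d\,\varphi(K_i)$; that averaged estimate is what the induction really needs. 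For equality the paper does not attempt a geometric splitting at all: it shows every coordinate section is again a minimizer, encodes the two-dimensional sections in a graph $\graph{P}$, proves this graph has no induced path of length three (hence is a cograph), and then reads off that $P$ is Hanner from the cograph structure.
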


Recall that a polytope $P$ is called \textit{locally anti-blocking} if for any $x \in P$ and coordinate subspace $H$ of $\RR^d$, $\proj_H P = P \cap H$ for $\proj_H$ the orthogonal projection on $H$. Note that a locally anti-blocking polytope need not be centrally symmetric. 
Locally anti-blocking polytopes and their various subclasses are natural families for testing conjectures: for example, Saint-Raymond \cite{saint1980volume} proved Mahler's conjecture for 1-unconditional polytopes (i.e. polytopes symmetric w.r.t. reflection about any coordinate hyperplane), and Artstein-Avidan, Sadovsky and Sanyal \cite{artstein2023geometric} generalized the result to locally anti-blocking polytopes; in the same paper \cite{artstein2023geometric} the authors also prove Godbersen's conjecture for the so-called anti-blocking bodies, later generalized by Sadovsky \cite{sadovsky2023godbersen} to locally anti-blocking bodies. The Hadwiger-Boltyanski illumination conjecture has been settled by Tikhomirov \cite{tikhomirov2017illumination} in another subclass, namely 1-symmetric polytopes; see also the works of Sun and Vritsiou \cite{sun2024illuminating,sun2024illumination}.

Recently, Sanyal and Winter, and independently Chambers and Portnoy \cite{sanyal2023kalai,chambers2022note}, proved the $3^d$ conjecture for locally anti-blocking polytopes; and Faifman, Vernicos and Walsh \cite{faifman2023volume} proved the flag conjecture for 1-unconditional polytopes (albeit without the equality case). The proof in \cite{faifman2023volume} uses highly non-elementary tools from Funk geometry. The main theorem of this paper is a strengthening of this result, and equally importantly, has an elementary inductive proof. Recall that generalized Hanner polytopes are defined similarly to Hanner polytopes, with the addition that in $\RR^1$ any segment with $0$ in its interior is also defined to be generalized Hanner. A locally anti-blocking polytope is called \textit{proper} if the origin is in its interior.

\begin{theorem}\label{thm:main}
    Any $d$-dimensional proper locally anti-blocking polytope has at least $2^d \cdot d!$ flags, with equality if and only if the polytope is a generalized Hanner polytope.
\end{theorem}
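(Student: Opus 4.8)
The plan is to induct on the dimension $d$. The base case $d=1$ is immediate: a proper locally anti-blocking polytope in $\RR^1$ is a segment $[a,b]$ with $a<0<b$, which has exactly $2=2^1\cdot 1!$ flags and is by definition a generalized Hanner polytope.

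Before the inductive step I would record three elementary facts. First, the class of proper locally anti-blocking polytopes is stable under the operations building generalized Hanner polytopes: the direct product $P_1\times P_2$ and the direct sum $P_1\vee P_2=\mathrm{conv}(P_1\cup P_2)$, where $P_1,P_2$ lie in complementary coordinate subspaces; it is also stable under polar duality $P\mapsto P^\circ$, since $0\in\inter P$ forces $0\in\inter P^\circ$ and the projection condition defining the class is self-dual. Second, flags are multiplicative: a complete flag of $P_1\times P_2$ is determined by a complete flag of $P_1$, a complete flag of $P_2$, and a shuffle of the two, so $\phi(P_1\times P_2)=\phi(P_1)\,\phi(P_2)\binom{d}{d_1}$ with $d=d_1+d_2$; since $(P_1\times P_2)^\circ=P_1^\circ\vee P_2^\circ$ and polarity reverses the face lattice (so $\phi(P^\circ)=\phi(P)$), the same identity holds for $P_1\vee P_2$. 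Third, among all $k$-dimensional polytopes the simplex is the unique minimizer of the number of flags, $\phi(\Delta^k)=(k+1)!$; this will be the relevant lower bound for the ``non-anti-blocking'' pieces that arise.

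With these in hand, if $P$ is a nontrivial coordinate-subspace product or direct sum $P=P_1\star P_2$, then each $P_i=P\cap V_i$ is again proper locally anti-blocking of dimension $d_i<d$, so the inductive hypothesis gives $\phi(P)=\phi(P_1)\,\phi(P_2)\binom{d}{d_1}\ge 2^{d_1}d_1!\cdot 2^{d_2}d_2!\cdot\binom{d}{d_1}=2^d d!$, with equality iff both $\phi(P_i)=2^{d_i}d_i!$, i.e.\ (by induction) iff both $P_i$ are generalized Hanner, i.e.\ iff $P$ is generalized Hanner. Thus everything reduces to the case where $P$ is \emph{irreducible} --- neither a coordinate product nor a coordinate direct sum of lower-dimensional polytopes --- where one must prove the \emph{strict} inequality $\phi(P)>2^d d!$ (note that for $d\ge 2$ such a $P$ is never generalized Hanner, so this is consistent with the claimed equality cases).

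This strict inequality is the crux, and the step I expect to resist a short argument. I would attack it through the orthant decomposition $P=\bigcup_{\epsilon\in\{\pm1\}^d}P_\epsilon$ with $P_\epsilon=P\cap\overline{O}_\epsilon$ a reflection of an anti-blocking polytope, together with the $d$ coordinate sections $P\cap\{x_i=0\}$, which are proper locally anti-blocking of dimension $d-1$ and hence covered by the inductive hypothesis. The aim is to sort the flags of $P$ by the sign patterns occupied by their successive faces, express the count orthant-by-orthant in terms of anti-blocking polytopes and the $(d-1)$-dimensional sections, and then show that irreducibility forces some orthant piece --- or some section, or the link of some vertex or facet --- to be strictly richer than the corresponding piece of any Hanner polytope of the same combinatorial type, producing at least one extra flag. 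Making the bookkeeping tight enough both to deliver the strict surplus in the irreducible case and to confirm that no surplus occurs precisely when $P$ is generalized Hanner is the delicate part; the rest is the routine induction above.
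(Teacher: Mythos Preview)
Your reduction to the irreducible case is sound, and the multiplicativity identity $\phi(P_1\times P_2)=\phi(P_1)\phi(P_2)\binom{d}{d_1}$ together with $\phi(P^\circ)=\phi(P)$ is correct. But the reduction leaves the entire content of the theorem unproven: you explicitly concede that the strict inequality for irreducible $P$ ``is the delicate part'' and then offer only a heuristic (``sort flags by sign patterns'', ``irreducibility forces some orthant piece to be strictly richer'') with no mechanism behind it. Your third ``elementary fact'' about simplices is never actually deployed. As written this is a plan, not a proof.

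The paper does not use your reduction at all. It proves the inequality directly, by assigning to every flag a \emph{sign}---the minimal orthant of the standard fan meeting the relative interiors of all faces in the flag---and showing that each of the $2^d$ orthants $D$ carries at least $d!$ flags. This last bound is obtained inductively by building explicit injections $\chi_C^D$ from flags of $P\cap\lin C$ with sign $C$ (for each facet $C$ of $D$) into flags of $P$ with sign $D$, and then proving that the images of the $\chi_C^D$ for the $\dim D$ different facets $C$ are pairwise disjoint. Disjointness is the non-obvious step, and it rests on a ``flip lemma'': for any flag $G$ of a polytope there is a \emph{unique} edge through $G_0$ along which projecting $G$ yields a flag, namely $(r_1r_2\cdots r_{d-1}G)_1$. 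This is the combinatorial input your sketch is missing; without it there is no way to compare flag counts of $P$ with those of its coordinate sections tightly enough to run an induction.

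For the equality case the paper again does not argue ``irreducible $\Rightarrow$ strict surplus''. Instead it uses the same $\chi_C^D$ machinery to show that flag-minimality of $P$ forces flag-minimality of every coordinate section. This lets one define the graph $\graph{P}$ and, by a single explicit computation ($\calc(\Pi_3)$ has $448>384=2^4\cdot 4!$ flags), rule out the path $\Pi_3$ as an induced subgraph on any four coordinates. Hence $\graph{P}$ is a cograph and $P$ is Hanner. Your hoped-for ``some orthant piece is strictly richer'' argument would, at minimum, need a way to push a surplus in a section up to a surplus in $P$; the paper's injections are exactly what make such a transfer possible, and you have proposed no substitute.
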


Let us make a simple reduction to an equivalent statement that is more convenient to work with.
A locally anti-blocking polytope $P$ is called \textit{normalized} if for any $e_i$ a standard basis vector of $\RR^d$, the support function $h_P(v) = \max_{x \in P} \innerp{x}{v}$ satisfies $h_P(\pm e_i) = 1$ (see \cite{sanyal2023kalai}). A normalized locally anti-blocking polytope is always contained in $\square^d$, the unit cube $[-1,1]^d$, and always contains $\lozenge^d$, its polar dual. Therefore, any normalized locally anti-blocking polytope is automatically proper. Note that a normalized generalized Hanner polytope is just a Hanner polytope. 

Given a locally anti-blocking polytope $P$, consider the map $\RR^d \to \RR^d$ given by
\[
    x \mapsto \sum_{i \in [d]: x_i \neq 0} \frac{x_i}{h_P(\sign x_i \cdot e_i)} e_i .
\]
The image of $P$ under this map is always a normalized locally anti-blocking polytope, and also has the same combinatorial structure as $P$ itself (see the notion of \textit{halfspace scaling} in \cite{sanyal2023kalai}). Thus, \Cref{thm:main} follows from the following statement, which is the version we proceed to prove.

\begin{theorem}\label{thm:main_equiv}
    Any $d$-dimensional normalized locally anti-blocking polytope has at least $2^d \cdot d!$ flags, with equality if and only if the polytope is a Hanner polytope.
\end{theorem}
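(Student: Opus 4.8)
The plan is to prove \Cref{thm:main_equiv} by induction on the dimension $d$. The base case $d=1$ is immediate: the only normalized locally anti-blocking polytope in $\RR^1$ is $[-1,1]$, which has $2 = 2^1\cdot 1!$ flags and is the $1$-dimensional Hanner polytope. For the inductive step, fix a coordinate, say $e_d$, and consider the \emph{waist} $M := P\cap\{x_d = 0\}$. Because $P$ is locally anti-blocking, $M = \proj_{\{x_d=0\}}P$; since $0\in\inter P$ this is a genuine $(d-1)$-polytope, and since zeroing the last coordinate of a point attaining $h_P(\pm e_i)=1$ (for $i<d$) keeps it in $P$, we still have $h_M(\pm e_i)=1$. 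Thus $M$ is a $(d-1)$-dimensional normalized locally anti-blocking polytope, so by the inductive hypothesis it has at least $2^{d-1}(d-1)!$ flags, with equality iff $M$ is Hanner. The theorem in dimension $d$ then reduces to the key estimate
\[
  \#\mathrm{flags}(P)\;\geq\;2d\cdot\#\mathrm{flags}(M),
\]
since $2d\cdot 2^{d-1}(d-1)! = 2^d d!$.

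To attack the key estimate I would use $\#\mathrm{flags}(P) = \sum_F \#\mathrm{flags}(F)$ over the facets $F$ of $P$, organized by the $e_d$-direction. Write an outer facet normal as $u=(u',u_d)$. If $u_d=0$ then $F = \widehat G := P\cap(G\times\RR)$ for a facet $G$ of $M$ (a ``vertical'' facet, occurring precisely when $P$ is full-dimensional over $G$). If $u_d\neq 0$ then $F$ is graphical over its projection $\bar F\subseteq M$, and the projections of the upper (resp.\ lower) such facets, together with the flat part of the upper (resp.\ lower) roof of $P$, form a polyhedral subdivision of $M$ induced by the graph of the concave roof function $y\mapsto\max\{t : (y,t)\in P\}$. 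Hence
\[
  \#\mathrm{flags}(P) = \Phi(\text{upper subdiv.\ of }M) + \Phi(\text{lower subdiv.\ of }M) + \sum_{G}\#\mathrm{flags}(\widehat G),
\]
where $\Phi$ of a subdivision is the sum of $\#\mathrm{flags}$ over its cells and $G$ runs over facets of $M$ over which $P$ is full-dimensional. The two Hanner models $\square^d = [-1,1]\times\square^{d-1}$ and $\lozenge^d = [-1,1]\vee\lozenge^{d-1}$ attain equality by opposite mechanisms: for the cube the subdivisions are trivial and the vertical facets carry the count, while for the cross-polytope there are no vertical facets and the nontrivial roof subdivisions carry it. So a term-by-term comparison is hopeless and the estimate must be proved globally.

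I expect the cleanest route to be a ``balanced'' inequality of the form
\[
  \Phi(\text{upper subdiv.\ of }M) + \tfrac12\sum_G\#\mathrm{flags}(\widehat G)\;\geq\;d\cdot\#\mathrm{flags}(M),
\]
together with its mirror image for the lower roof, which add up to the key estimate. Proving it is the main obstacle: one must show that the roof of a locally anti-blocking polytope over its waist is forced to be combinatorially rich enough, and this is exactly where the structure of local anti-blocking enters --- the nestedness of the slices $\{y : (y,t)\in P\}$ as $t$ varies, the anti-blocking property inside each orthant, and the interaction of each $\widehat G$ with the cells of the roof subdivision adjacent to $G$. A plausible auxiliary tool is the statement that $\Phi$ of any polyhedral subdivision of a polytope $Q$ is at least $\#\mathrm{flags}(Q)$, refined to account for cells meeting $\partial Q$ along facets of $Q$.

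Finally, for the equality characterization: if $\#\mathrm{flags}(P) = 2^d d!$, then running the above for each coordinate $i$ shows that every waist $M_i$ satisfies $\#\mathrm{flags}(M_i) = 2^{d-1}(d-1)!$ and hence is Hanner by induction, while equality also holds in the key estimate for every $i$. Analyzing the equality case of the balanced inequality should force $P$ to split as a product $P_1\times P_2$ or a join $P_1\vee P_2$ with $\dim P_1,\dim P_2<d$ (the cube case giving a product with a segment, the cross-polytope case a join with a segment, intermediate cases a genuine product or join); the factors are again normalized locally anti-blocking and, being forced to equality, are Hanner by induction, so $P$ is Hanner. The converse is the easy direction of the same induction, using $\#\mathrm{flags}(P_1\times P_2) = \binom{\dim P_1+\dim P_2}{\dim P_1}\#\mathrm{flags}(P_1)\#\mathrm{flags}(P_2)$ and, for joins, the self-duality of the flag count together with $(P_1\vee P_2)^\circ = P_1^\circ\times P_2^\circ$.
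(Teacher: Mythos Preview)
Your proposal is a plan rather than a proof: you correctly reduce the theorem to the ``key estimate'' $\#\mathrm{flags}(P)\ge 2d\cdot\#\mathrm{flags}(M)$, reformulate it as the ``balanced inequality'', and then explicitly say that proving it is the main obstacle. Everything past that point is heuristic. The auxiliary tool you propose --- that $\Phi$ of a subdivision of $Q$ dominates $\#\mathrm{flags}(Q)$ --- even if true, only gives $\Phi(\text{upper})\ge\#\mathrm{flags}(M)$, which in the cross-polytope regime (no vertical facets) is off from the needed $d\cdot\#\mathrm{flags}(M)$ by a factor of $d$; the unspecified ``refinement for cells meeting $\partial Q$'' is exactly the missing content. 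The equality analysis has the same status: you assert that equality in the balanced inequality forces a product/join splitting, but give no argument, and there is no evident mechanism by which tightness of a roof inequality over one coordinate produces a global decomposition of $P$.

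The paper proceeds quite differently and, in particular, never isolates a single coordinate. It assigns to each flag its \emph{sign}, the minimal orthant $D\in\phist$ meeting all $\relint F_i$, and for every facet $C$ of $D$ constructs an injection $\chi_C^D:\signedflags{C}(P)\to\signedflags{D}(P)$; the images for distinct facets $C$ are shown to be disjoint via a flip lemma identifying, for any flag $G$, the \emph{unique} edge through $G_0$ along which projecting $G$ yields a flag. This gives $|\signedflags{D}(P)|\ge(\dim D)!$ for every cone $D$, hence $\#\mathrm{flags}(P)\ge 2^d d!$ orthant by orthant --- a statement strictly finer than your single-waist estimate, and one that uses all $d$ coordinate directions at once. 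For equality the paper again avoids forcing a splitting directly: the same injections show every coordinate section is a minimizer, the two-dimensional sections are therefore $\square^2$ or $\lozenge^2$, these are encoded in the graph $\graph{P}$, minimality rules out an induced path of length $3$, and the cograph characterization of Hanner polytopes finishes the argument. None of these ingredients appears in your outline, and the single-waist induction you sketch does not obviously manufacture them.
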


\subsection{Acknowledgements}
The author would like to thank Professors Shiri Artstein-Avidan and Yaron Ostrover for their supervision and guidance. The author also thanks Boaz Slomka and Rei Henigman for listening to half-formed ideas, and Boaz Guberman for insightful talks about Hanner polytopes. The author was partially supported by the ERC under the European Union’s Horizon 2020 research and innovation programme (grant agreement no. 770127), by ISF grant Number 784/20, and by the Binational Science Foundation (grant no. 2020329).

\section{Notations and preliminaries}\label{sec:preq}

We denote $[d] = \{1,...,d\}$ and $[k,d] = \{k,...,d\}$ for $k \leq d$ integers, and the convex hull of a set $X \subseteq \RR^d$ by $\bigvee X$. The relative interior $\relint X$ of some set $X \subseteq \RR^d$ is defined as the interior of $X$ in the topology induced by $\aff X$, the affine span of $X$.

For a closed convex set $K \subseteq \RR^d$ and $\nu \in \RR^d$, let $K(\nu) = \setdef{x \in K}{\innerp{x}{\nu} = \sup_K \innerp{\cdot}{\nu}}$. A set of this form is called a face of $K$, and by convention the empty set $\emptyset$ is also a face of $K$. Denote by $\faces(K)$ the set of faces of $K$, including the empty face (which is of dimension -1 by convention) and $K$ itself. The faces $\emptyset$ and $K$ are called \textit{non-proper faces}. Denote by $\faces{i}(K)$ the set of faces of $K$ of dimension $i$.
A polyhedron is a closed convex set with a finite number of faces; equivalently, it is the intersection of a finite number of half-spaces. A polytope is a compact polyhedron. We call faces of a polyhedron of codimension 1 \textit{facets}, faces of dimension 1 \textit{edges}, and faces of dimension 0 \textit{vertices}. For a point $x \in P$, the support of $x$ in $P$, $\supp_P x$, is the unique face $F \in \faces(P)$ with $x \in \relint F$, or equivalently, the unique minimal face of $P$ that contains $x$. Similarly, for a set $A \subseteq P$ we denote by $\supp_P A$ the minimal face of $P$ that contains $A$. For a non-empty face $F \in \faces(P)$, denote by $N_P F = \setdef{\nu \in \RR^d}{P(\nu) \supseteq F}$ the \textit{normal cone} of $F$ in $P$.

Recall that a (polyhedral) cone is a set which is positively spanned by a finite number of vectors in $\RR^d$. A \textit{fan} in $\RR^d$ is a finite family $\Phi$ of polyhedral cones such that $\bigcup_{C \in \Phi} C = \RR^d$, and for any $C,D \in \Phi$, $C \cap D$ is a face of both $C$ and $D$, and is also a member of $\Phi$. We denote the set of $k$-dimensional cones of a fan $\Phi$ by $\Phi_k$.
For any fan $\Phi$ and point $p \in \RR^d$ denote by $\supp_\Phi p \in \Phi$ the minimal (w.r.t. inclusion) cone in $\Phi$ that contains $p$. For $J \subseteq [d]$ denote by $\RR^J$ the subspace of $\RR^d$ linearly spanned by the standard Euclidean basis vectors $\{e_j\}_{j \in J}$; a subspace of this form is called a \textit{coordinate subspace of $\RR^d$}.

A flag of a polyhedron $P$ of dimension $d$ is a collection $F = (F_{-1}, F_0, F_1, ..., F_d)$ with $F_i \in \faces{i}(P)$ for all $i \in [-1,d]$ and $F_i \subsetneq F_j$ for $i < j$. The set of flags of $P$ will be denoted $\flags(P)$. We note that this notion is sometimes referred to in the literature as a \textit{full} or \textit{complete} flag.

\subsection{Sketch of the proof and plan of the paper}

Let $P$ be a proper locally anti-blocking polytope and consider the standard fan $\phist$ in $\RR^d$, which consists of cones positively spanned by all subsets of $\{\pm e_i\}_{i=1}^d$ which don't contain both $\pm e_i$ for any $i$ (that is, $\phist$ is the minimal fan containing all the orthants in $\RR^d$). We start by introducing the notion of a \textit{sign} of a flag $F \in \flags(P)$ in \Cref{sec:signs}. The sign of $F$, denoted $\sign F$, is defined as the minimal cone in $\phist$ that intersects all the relative interiors of the faces composing $F$, and it is shown that $\dim \sign F \geq \dim P$. Therefore for the full-dimensionsional polytope $P$, the signs of all its flags are full dimensional cones in $\phist$.
For a cone $C \in \phist$, we denote by $\signedflags{C}(P)$ the set of flags of the intersection $P \cap \lin C$ that have sign exactly $C$, where $\lin C = \aff C - \aff C$ is the linear span. Note that these are not flags of $P$ itself, but of a section, see \Cref{fig:raise} for an example. In \Cref{sec:flip_lemma}, we state and prove \Cref{lem:unique_proj}, which will later be used in \Cref{sec:inequality}.

We count the flags of $P$ inductively as follows. For any cone $D \in \phist$ and one of its facets $C$ (which is also a cone of $\phist$ of course), we build an injection $\chi_C^D: \signedflags{C}(P) \to \signedflags{D}(P)$, and we show that these injections have disjoint images, for different facets $C$ of $D$. Since a cone $D \in \phist$ has exactly $\dim D$ facets, an inductive argument gives $|\signedflags{D}(P)| \geq \dim D !$ for all $D \in \phist$. Specifically for full-dimensional cones $D$, we get $|\signedflags{D}(P)| \geq d!$, and since there are $2^d$ full-dimensional cones (i.e. the orthants of $\RR^d$) and
\[
    \flags(P) = \bigsqcup_{\substack{D \in \phist \\ \dim D = d}} \signedflags{D}(P) ,
\]
we conclude that $|\flags(P)| \geq 2^d \cdot d!$ as needed.
This is carried out in \Cref{sec:inequality}.

For the equality case, we follow the strategy of Sanyal and Winter in \cite{sanyal2023kalai}: assume $P$ is a normalized locally anti-blocking polytope in $\RR^d$ and assume it has the correct number of flags. First, in \Cref{prop:min_downwards}, we show that for any coordinate hyperplane $H$, $P \cap H$ also has the correct number of flags. This allows us to define the graph $\graph{P}$, defined in \Cref{sec:properties}, which encodes the combinatorial structure of $P$ itself: in fact, $P$ can be recovered from $\graph{P}$ in this case, see \Cref{cor:graph_determines_P}. We use the classification in \Cref{cl:hanner_characterization} to assert that since all coordinate sections of $P$ are also minimizing, $P$ must be a Hanner polytope. This is done in \Cref{sec:equality}.

Let us give some more details.

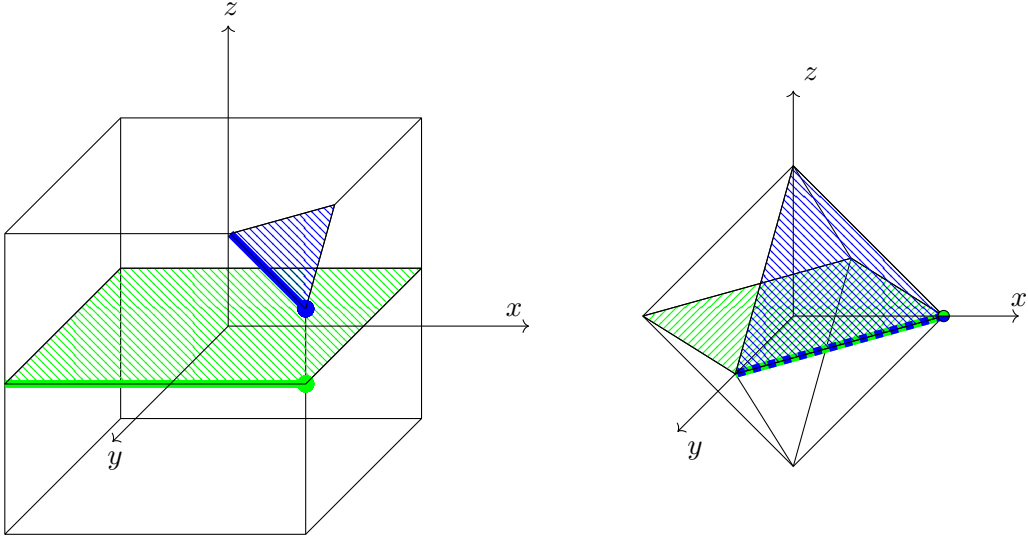
\begin{figure}
    \centering
    \begin{minipage}{.5\textwidth}
        \centering
        \begin{tikzpicture}[scale=2,line join=bevel]
            \coordinate (A1) at (1,-1,1);
            \coordinate (A2) at (1,-1,-1);
            \coordinate (A3) at (-1,-1,-1);
            \coordinate (A4) at (-1,-1,1);
            \coordinate (B1) at (1,0,1);
            \coordinate (B2) at (1,0,-1);
            \coordinate (B3) at (-1,0,-1);
            \coordinate (B4) at (-1,0,1);
            \coordinate (C1) at (1,0.5,1);
            \coordinate (C2) at (1,1,0.5);
            \coordinate (C3) at (0.5,1,1);
            \coordinate (D2) at (1,1,-1);
            \coordinate (D3) at (-1,1,-1);
            \coordinate (D4) at (-1,1,1);
            \coordinate (E1) at (0.1,0,0);
            \coordinate (E2) at (0,2,0);
            \coordinate (E3) at (2,0,0);
            \coordinate (E4) at (0,0,2);
            
            \draw (A1) -- (A2) -- (A3) -- (A4) -- cycle;
            \draw (B1) -- (B2) -- (B3) -- (B4) -- cycle;
            \draw (A1) -- (C1);
            \draw (A2) -- (D2);
            \draw (A3) -- (D3);
            \draw (A4) -- (D4);
            \draw (C1) -- (C2) -- (C3) -- cycle;
            \draw (C2) -- (D2) -- (D3) -- (D4) -- (C3);

            \draw[green] plot[mark=*, mark size=1.5] (B1);
            \draw[green,line width=3] (B1) -- (B4);
            \draw[pattern=north west lines, pattern color=green] (B1) -- (B4) -- (B3) -- (B2) -- cycle;

            \draw[blue] plot[mark=*, mark size=1.5] (C1);
            \draw[blue,line width=3] (C1) -- (C3);
            \draw[pattern=north west lines, pattern color=blue] (C1) -- (C2) -- (C3) -- cycle;

            \draw[->] (E1) -- ++(E2);
            \draw[->] (E1) -- ++(E3);
            \draw[->] (E1) -- ++(E4);
            \node[anchor=south] at (E3) {$x$};
            \node[anchor=north west] at (E4) {$y$};
            \node[anchor=south west] at (E2) {$z$};
        \end{tikzpicture}
    \end{minipage}%
    \begin{minipage}{.5\textwidth}
        \centering
        \begin{tikzpicture}[scale=2,line join=bevel]
            \coordinate (A1) at (1,0,0);
            \coordinate (A2) at (-1,0,0);
            \coordinate (B1) at (0,1,0);
            \coordinate (B2) at (0,-1,0);
            \coordinate (C1) at (0,0,1);
            \coordinate (C2) at (0,0,-1);
            
            \coordinate (E1) at (0,0,0);
            \coordinate (E2) at (0,1.5,0);
            \coordinate (E3) at (1.5,0,0);
            \coordinate (E4) at (0,0,2);
            
            \draw (A1) -- (B1) -- (A2) -- (B2) -- cycle;
            \draw (A1) -- (C1) -- (A2) -- (C2) -- cycle;
            \draw (B1) -- (C1) -- (B2) -- (C2) -- cycle;

            \draw[green,line width=3] (A1) -- (C1);
            \draw[pattern=north east lines, pattern color=green] (A1) -- (C1) -- (A2) -- (C2) -- cycle;
            
            \draw[blue,line width=3,dashed] (A1) -- (C1);
            \draw[pattern=north west lines, pattern color=blue] (A1) -- (C1) -- (B1) -- cycle;

            \node at (A1) [draw, circle, minimum size=0.15cm, inner sep=0pt, path picture={
                \clip (path picture bounding box.center) circle (0.15cm);
                \fill[green] (-0.2,0) rectangle (0.2,0.2);
                \fill[blue]  (-0.2,-0.2) rectangle (0.2,0);
            }] {};
            
            \draw[->] (E1) -- ++(E2);
            \draw[->] (E1) -- ++(E3);
            \draw[->] (E1) -- ++(E4);
            \node[anchor=south] at (E3) {$x$};
            \node[anchor=north west] at (E4) {$y$};
            \node[anchor=south west] at (E2) {$z$};
        \end{tikzpicture}
    \end{minipage}
    \caption{Two examples of some flag $F$ (in green) and $\chi_C^D F$ (in blue), where $C$ is the cone spanned by $x,y$ and $D$ is spanned by $x,y,z$.}
    \label{fig:raise}
\end{figure}

\subsection{Flags of polytopes and their signs}\label{sec:signs}

In this subsection, fix some fan $\Phi$. We note that in the rest of the paper, we will only use the standard fan $\phist$, nevertheless the definition makes sense for any fan. Given a flag $F \in \flags(P)$, we first show that there is a cone in $\Phi$ that intersects the relative interiors of all the faces $F_i$, and then show that the set of cones that have this property is closed under intersections. The intersection of all these cones will be denoted $\sign_\Phi F$, the \textit{sign} of $F$. 

We start with a simple fact about simplices. 

\begin{fact}\label{fact:face_directions}
    Let $S$ be a $k$-dimensional simplex in $\RR^d$ and let $x,y \in \RR^d$ be points such that $H = \aff (S \vee \{x,y\})$ is $(k+1)$-dimensional, and that $x,y$ lie in the same connected component of $H \setminus \aff S$. Then $\relint \parens*{x \vee S} \cap \relint \parens*{y \vee S} \neq \emptyset$.
\end{fact}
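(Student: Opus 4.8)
The plan is to work entirely inside the $(k+1)$-dimensional affine space $H$, using the orthogonal splitting of $H$ transverse to $\aff S$. First I would note that $\aff S$ is an affine hyperplane of $H$, so $H \setminus \aff S$ has exactly two connected components, the two open half-spaces it bounds; since $x$ and $y$ lie in the same component, in particular neither lies on $\aff S$, so both $x \vee S$ and $y \vee S$ are genuine $(k+1)$-dimensional simplices in $H$ on the same side of $\aff S$.

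Next, fix $o \in \relint S$, put $V = \aff S - o$ (a $k$-dimensional linear subspace), and choose a unit vector $n \in H - o$ orthogonal to $V$ and pointing into the half-space containing $x$ and $y$. Write $x - o = v_x + a n$ and $y - o = v_y + b n$ with $v_x, v_y \in V$ and $a, b > 0$. The bookkeeping input is the standard description of the relative interior of a join: a point of $x \vee S$ has the form $o + (1-\lambda) w + \lambda(v_x + a n)$ with $w \in S - o$ and $\lambda \in [0,1]$, and it lies in $\relint(x \vee S)$ precisely when $\lambda \in (0,1)$ and $w \in \relint(S - o)$, and similarly for $y \vee S$.

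Then I would exhibit an explicit common point. Pick a small $c > 0$ and set $\lambda = c/a$, $\mu = c/b$, so that $\lambda a = \mu b = c$ and $\lambda, \mu \in (0,1)$ once $c$ is small; fix $w_0 \in \relint(S - o)$ (one may take $w_0 = 0$) and consider $p = o + (1-\mu) w_0 + \mu(v_y + b n)$, which lies in $\relint(y \vee S)$. Because the $n$-components agree, $p$ also equals $o + (1-\lambda) w + \lambda(v_x + a n)$ with $w = \frac{1}{1-\lambda}\big((1-\mu) w_0 + \mu v_y - \lambda v_x\big) \in V$. As $c \to 0^+$ we have $\lambda, \mu \to 0$ and $w \to w_0$, so since $\relint(S - o)$ is relatively open in $V$, for all sufficiently small $c$ we also have $w \in \relint(S - o)$, giving $p \in \relint(x \vee S) \cap \relint(y \vee S)$.

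The argument is essentially a computation once the transverse coordinate is in place; the hypotheses enter only through the sign condition $a, b > 0$ — which is exactly the assumption that $x, y$ lie in the same component of $H \setminus \aff S$ and is what lets us solve $\lambda a = \mu b$ with $\lambda, \mu \in (0,1)$ — and through $\relint S \neq \emptyset$, which provides $o$ and $w_0$. I do not expect a real obstacle here; the only delicate point is getting the relative-interior conditions on $\lambda, \mu, w$ right, so I would be careful to cite the join/relative-interior description explicitly rather than argue by hand.
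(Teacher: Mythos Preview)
The paper states this result as a bare fact without proof, labeling it ``a simple fact about simplices'' and using it as input to \Cref{cl:meet_closed}. Your argument is correct and supplies exactly the kind of routine verification the paper omits: the transverse coordinate $n$ reduces the relative-interior condition to matching the heights $\lambda a = \mu b$, and the continuity argument as $c \to 0$ handles the tangential component. There is nothing to compare against in the paper, but your write-up would serve perfectly well as the missing justification.
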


\begin{claim}\label{cl:sign}
    Let $P$ be a $k$-dimensional polytope in $\RR^d$ and $F = (F_{-1},F_0,...,F_k) \in \flags(P)$. Then there is a cone $C \in \Phi$ such that $C \cap \relint F_i \neq \emptyset$ for all $i \in [0,k]$. Additionally, for any cone $C \in \Phi$ which satisfies this condition, $\dim C \geq k$.
\end{claim}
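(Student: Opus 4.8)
The statement has two halves, which I would prove separately: first that some cone $C\in\Phi$ meets $\relint F_i$ for every $i\in[0,k]$, and second that any such cone has $\dim C\ge k$. For the existence half, the plan is to build $C$ by a finite sequence of small perturbations. Fix arbitrary points $p_i\in\relint F_i$ (so $p_0$ is the vertex $F_0$). The only thing I need from the fan is an elementary consequence of its finiteness: for any $a,b\in\RR^d$ there is $t_0>0$ with $a\in\supp_\Phi(a+tb)$ for all $t\in(0,t_0)$ — because, $\Phi$ being finite, the map $t\mapsto\supp_\Phi(a+tb)$ takes only finitely many values on $(0,\infty)$, the preimage of a cone $D$ being the interval $\{t>0:a+tb\in\relint D\}$, so the map is constant, say $\equiv C$, on some initial interval $(0,t_0)$, whence $a+tb\in C$ there and $a=\lim_{t\to0^+}(a+tb)\in\overline{C}=C$; since $\supp_\Phi(a)$ is the minimal cone of $\Phi$ containing $a$, this yields $\supp_\Phi(a)\subseteq\supp_\Phi(a+tb)$. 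Now set $y_0:=p_0$, $C_0:=\supp_\Phi(y_0)$, and for $j=1,\dots,k$ let $y_j:=(1-t_j)y_{j-1}+t_jp_j$ with $t_j\in(0,1)$ small enough — a threshold depending only on the already-fixed $y_{j-1}$ and $p_j$ — that $y_{j-1}\in\supp_\Phi(y_j)=:C_j$. Then $y_j\in\relint F_j$, being a convex combination with positive weights of $y_{j-1}\in F_{j-1}\subseteq F_j$ and $p_j\in\relint F_j$; and $C_{j-1}=\supp_\Phi(y_{j-1})\subseteq C_j$. Hence $C:=C_k$ contains every $C_i$, so $y_i\in C_i\subseteq C$ with $y_i\in\relint F_i$ for all $i$, as desired.

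For the dimension bound, let $C\in\Phi$ meet every $\relint F_i$ and pick $x_i\in C\cap\relint F_i$. I would show that $x_0,\dots,x_k$ are affinely independent; granting that, $\aff\{x_0,\dots,x_k\}$ is $k$-dimensional and is contained in $\lin C$ (all $x_i$ lie in the cone $C$), so $\dim C=\dim\lin C\ge k$. Affine independence follows by induction on $j$: for $i<j$ we have $x_i\in F_i\subseteq F_{j-1}$, so $\aff\{x_0,\dots,x_{j-1}\}\subseteq\aff F_{j-1}$; since $F_{j-1}$ is a proper face of the polytope $F_j$, we have $\aff F_{j-1}\cap F_j=F_{j-1}$ and $F_{j-1}\cap\relint F_j=\emptyset$; as $x_j\in\relint F_j\subseteq F_j$, these force $x_j\notin\aff F_{j-1}$, and hence $x_j\notin\aff\{x_0,\dots,x_{j-1}\}$.

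The one genuinely delicate point is the non-uniformity in the existence half: the tempting shortcut of writing down a single ``flag-generic'' point, e.g.\ $\sum_i\epsilon^ip_i$, requires controlling $\supp_\Phi$ of \emph{all} of its truncations $\sum_{i\le j}\epsilon^ip_i$ simultaneously as $\epsilon\to0$, and the naive estimates degenerate as those truncations approach the relative boundaries of the cones containing them; choosing the perturbation parameters one at a time, each after the preceding point is pinned down, sidesteps this. Incidentally, I did not need \Cref{fact:face_directions} above; I would expect it to enter elsewhere in this subsection — in particular for the complementary fact that the cones meeting every $\relint F_i$ are closed under intersection, so that $\sign_\Phi F$ is well defined — and for the injections $\chi_C^D$ constructed later.
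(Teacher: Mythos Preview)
Your proof is correct and follows essentially the same approach as the paper's: both build a nested chain $C_0\subseteq\cdots\subseteq C_k$ with $C_j=\supp_\Phi(y_j)$ for some $y_j\in\relint F_j$, stepping from $y_{j-1}$ to a nearby point of $\relint F_j$ using the local structure of the fan (you perturb along the segment toward $p_j$, the paper uses a small ball around $y_{j-1}$), and both derive the dimension bound from the affine independence of points $x_i\in C\cap\relint F_i$. Your observation that \Cref{fact:face_directions} is not needed here is accurate --- the paper invokes it only in the subsequent intersection-closure claim.
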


\begin{proof}
    We prove the following stronger statement: for any $k \in [0,d]$, $k$-dimensional polytope $P$ and a flag $F \in \flags(P)$, there is a nested sequence of cones $C_0 \subseteq C_1 \subseteq ... \subseteq C_k$ in $\Phi$ such that for any $i \in [0,k]$, $\relint C_i \cap \relint F_i \neq \emptyset$ (the cones $C_i$ do not have to be strictly nested). This is indeed stronger than the original statement: the cone $C_k$ satisfies it. The proof is by induction on $k$. If $k=0$, note that $\supp_\Phi p \in \Phi$ satisfies the statement, for $p$ the unique point in $P$.
    
    Assume the statement holds for any $(k-1)$-dimensional polytope and its flags, we will show it holds for $P$, a $k$-dimensional polytope, and a given flag $F = (F_{-1},F_0,...,F_k) \in \flags(P)$. Since $F' = (F_{-1},F_0,...,F_{k-1})$ is a flag of $F_{k-1}$, which is $(k-1)$-dimensional, by induction there is a nested sequence of cones $C_0 \subseteq ... \subseteq C_{k-1}$ as above. Pick some $x \in \relint C_{k-1} \cap \relint F_{k-1}$.
    
    As $x \in \relint C_{k-1}$, there is some $r>0$ such that
    \begin{equation}\label{eq:1}
        x + rB \subseteq \bigcup_{C_{k-1} \subseteq D \in \Phi} D ,
    \end{equation}
    where $B$ is the unit Euclidean ball.
    Since $x \in F_{k-1}$ is in the relative boundary of $F_k$, there is some $y \in \relint F_k \cap (x + rB)$.
    Consider $C_k = \supp_\Phi y$. By \eqref{eq:1}, $C_k \supseteq C_{k-1}$, and by construction, $y \in \relint C_k \cap \relint F_k$.
    
    Additionally, let $C \in \Phi$ be a cone that has $C \cap \relint F_i \neq \emptyset$ for all $i \in [0,k]$. Choose points $x_i \in C \cap \relint F_i$. As the points $x_i$ lie in the relative interiors of their corresponding faces $F_i$, they must be in general position, and so do not all belong to any $(k-1)$-dimensional affine subspace. Since they all belong to $C$, it must hold that $\dim C \geq k$.
\end{proof}

\begin{claim}\label{cl:meet_closed}
    Let $P$ be a $k$-dimensional polytope in $\RR^d$ and let $F \in \flags(P)$. The set of cones $C \in \Phi$ for which $C \cap \relint F_i \neq \emptyset$ for all $i \in [0,k]$ is closed under intersection. That is, if the above condition holds for $C,D \in \Phi$, then it also holds for $C \cap D$. 
\end{claim}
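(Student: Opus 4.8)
The plan is to strengthen the statement so that the induction can carry a witnessing simplex along. Precisely, I would prove by induction on $k$ that \emph{for every $k$-dimensional polytope $P$, every flag $F=(F_{-1},\dots,F_k)\in\flags(P)$, and every pair of cones $C,D\in\Phi$ each meeting all of $\relint F_0,\dots,\relint F_k$, there is a $k$-simplex $S\subseteq C\cap D$ with a chain of faces $S_0\subsetneq S_1\subsetneq\dots\subsetneq S_k=S$, $\dim S_i=i$, such that $\relint S_i\subseteq\relint F_i$ for all $i\in[0,k]$.} Granting this, \Cref{cl:meet_closed} follows at once: since $\Phi$ is a fan, $C\cap D$ is again a cone of $\Phi$, and the inclusions $\relint S_i\subseteq (C\cap D)\cap\relint F_i$ exhibit $C\cap D$ as a cone meeting every $\relint F_i$.

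The base case $k=0$ is trivial (the hypothesis forces the single point of $P$ to lie in $C\cap D$). For the step, $F'=(F_{-1},\dots,F_{k-1})$ is a flag of the $(k-1)$-dimensional polytope $F_{k-1}$, and $C,D$ still meet $\relint F_0,\dots,\relint F_{k-1}$; the inductive hypothesis supplies a $(k-1)$-simplex $S'=S_{k-1}\subseteq C\cap D$ with $\relint S_i\subseteq\relint F_i$ for $i\le k-1$. Note $\aff S'=\aff F_{k-1}$, since $S'$ is $(k-1)$-dimensional with $\relint S'\subseteq\relint F_{k-1}$. Pick $x\in C\cap\relint P$ and $y\in D\cap\relint P$ (possible since $F_k=P$). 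Because $F_{k-1}=P(\nu)$ for some $\nu\ne 0$, we have $\aff F_{k-1}\cap P\subseteq F_{k-1}$, so $\relint P$ — which misses the proper face $F_{k-1}$ — is disjoint from the hyperplane $\aff S'=\aff F_{k-1}$; in particular $x,y\notin\aff S'$, so $x\vee S'\subseteq C$ and $y\vee S'\subseteq D$ are $k$-simplices, and (using that a relative-interior point of $P$ combined with any point of $P$ under positive weight stays in $\relint P$) $\relint(x\vee S'),\relint(y\vee S')\subseteq\relint P$.

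The crucial step is to fuse the two pyramids $x\vee S'$ and $y\vee S'$ over their common base $S'$, and this is exactly what \Cref{fact:face_directions} does. Apply it to the $(k-1)$-simplex $S'$ and the points $x,y$: the affine hull $H=\aff(S'\vee\{x,y\})$ equals $\aff P$, hence is $k$-dimensional, and $x,y$ lie in the same connected component of $H\setminus\aff S'$ because $\relint P$ is connected and disjoint from that hyperplane. The Fact yields $q\in\relint(x\vee S')\cap\relint(y\vee S')$; then $q\in(x\vee S')\cap(y\vee S')\subseteq C\cap D$, while $q\in\relint(x\vee S')\subseteq\relint P$, so $q\notin\aff S'$. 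Setting $S_k:=S'\vee\{q\}$, this is a $k$-simplex having $S_{k-1}=S'$ as a facet, it lies in $C\cap D$ by convexity (as $S'\subseteq C\cap D$ and $q\in C\cap D$), and $\relint S_k\subseteq\relint P=\relint F_k$ (again by the absorption property, with $q$ in place of $x$). Together with the chain $S_0\subsetneq\dots\subsetneq S_{k-1}$ this completes the induction.

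I do not expect a single deep difficulty, but two things need care. The real idea is the choice of what to induct on: knowing only that $C\cap D$ meets each lower $\relint F_i$ is not enough to place the apex $q$ inside $C\cap D$ — one genuinely needs a full $(k-1)$-simplex $S'\subseteq C\cap D$ sitting in $\relint F_{k-1}$ to feed into \Cref{fact:face_directions}. The other, more routine, point is the bookkeeping with relative interiors (that $\aff S'=\aff F_{k-1}$, that $\relint P$ is separated from $\aff F_{k-1}$ by the facet's supporting functional, and that the pyramid interiors land in $\relint P$), all of which are standard once the right functional $\nu$ is named.
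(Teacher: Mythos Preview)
Your proof is correct and follows essentially the same approach as the paper. The paper inducts on the index $i$ within a fixed flag, carrying along points $x_j\in\relint F_j\cap C\cap D$ (which, being in general position, form exactly the simplex you track explicitly), and at each step applies \Cref{fact:face_directions} to the simplex $\bigvee_{j<i}x_j$ and points $x\in\relint F_i\cap C$, $y\in\relint F_i\cap D$; your induction on $\dim P$ via the sub-polytope $F_{k-1}$ unrolls to the same construction, just packaged with the simplex and its face chain made explicit.
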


\begin{proof}
    Let $C,D \in \Phi$ and assume that for any $i \in [0,k]$, $\relint F_i \cap C$ and $\relint F_i \cap D$ are nonempty. We will show that for any $i \in [0,k]$, $\relint F_i \cap C \cap D \neq \emptyset$ by induction on $i$.

    For $i = 0$, there is a unique point $p \in \relint F_0$ and by hypothesis $p \in C \cap D$.
    
    Assume the statement is true for any $j \in [0,i-1]$, we will show it holds for $i$ as well. By the induction hypothesis, there are points $x_j \in \relint F_j \cap C \cap D$, for $j \in [0,i-1]$. 
    Choose some $x \in \relint F_i \cap C$ and $y \in \relint F_i \cap D$.
    We first note that $\{x_j\}_{j=0}^{i-1} \cup \{x,y\}$ all belong to an $i$-dimensional affine subspace, namely $\aff F_i$, and that $\{x_j\}_{j=0}^{i-1}$ affinely span $\aff F_{i-1}$. Next, note that $x$ and $y$ lie on the same connected component of $\aff F_i \setminus \aff F_{i-1}$, since $F_{i-1}$ is a facet of $F_i$. 
    By \Cref{fact:face_directions}, there is some $z \in \relint \parens*{x \vee \bigvee_{j=0}^{i-1} x_j} \cap \relint \parens*{y \vee \bigvee_{j=0}^{i-1} x_j} \subseteq C \cap D$. Since $x \vee \bigvee_{j=0}^{i-1} x_j$ and $y \vee \bigvee_{j=0}^{i-1} x_j$ both affinely span $F_i$ and are contained in it, we see that $z \in \relint F_i$. See \Cref{fig:book}.
\end{proof}

\begin{figure}
    \centering
    \begin{tikzpicture}[scale=5,line join=bevel]
        \pgfsetxvec{\pgfpoint{1cm}{0cm}}
        \pgfsetyvec{\pgfpoint{-.5cm}{-.5cm}}
        \pgfsetzvec{\pgfpoint{0cm}{0.8cm}}
        
        \coordinate (S0) at (0,0,0);
        \coordinate (S1) at (0,0,1);
        \coordinate (CD0) at (1,1,0);
        \coordinate (CD1) at (1,1,1);
        \coordinate (C0) at (0,0.7,0);
        \coordinate (C1) at (0,0.7,1);
        \coordinate (D0) at (0.7,0,0);
        \coordinate (D1) at (0.7,0,1);
        \coordinate (x0) at (0.5,0.5,0.25);
        \coordinate (F1) at (-0.1,-0.1,0.25);
        \coordinate (x1) at ($(x0)!0.5!(F1)$);
        \coordinate (F2) at (0.85,0.85,1.1);
        \coordinate (x) at (0.5,0.5,0.75);
        \coordinate (y) at (0.1,0.1,0.75);

        \draw (S0) -- (S1) -- (CD1) -- (CD0) -- cycle;
        \draw (S0) -- (S1) -- (C1) -- (C0) -- cycle;
        \draw (S0) -- (S1) -- (D1) -- (D0) -- cycle;
        \draw[line width=0.5mm] (x0) -- (F1);
        \draw[line width=0.5mm] (x0) -- (F2);

        \node[anchor=north] at (x0) {$x_0$};
        \node[circle,draw=purple, fill=purple, inner sep=0pt, minimum size=3pt] () at (x0) {};
        \node[anchor=north] at (x1) {$x_1$};
        \node[circle ,draw=purple, fill=purple, inner sep=0pt, minimum size=3pt] () at (x1) {};
        \node[anchor=south] at (x) {$x$};
        \node[circle,draw=red, fill=red, inner sep=0pt, minimum size=3pt] () at (x) {};
        \node[anchor=south] at (y) {$y$};
        \node[circle,draw=blue, fill=blue, inner sep=0pt, minimum size=3pt] () at (y) {};

        \fill[fill=red, opacity=0.5] (x0) -- (x) -- (x1) -- cycle;
        \fill[fill=blue, opacity=0.5] (x0) -- (y) -- (x1) -- cycle;

        \node[anchor=south west] at ($(C0)!0.5!(CD0)$) {\large $C$};
        \node[anchor=south west] at ($(D0)!0.5!(CD0)$) {\large $D$};
    \end{tikzpicture}
    \caption{The cones $C,D$ intersecting at a plane; a point $x \in \relint F_2 \cap C$ (in red); a point $y \in \relint F_2 \cap D$ (in blue); and points $(x_j)_{j=0}^{1}$ (in purple).}
    \label{fig:book}
\end{figure}
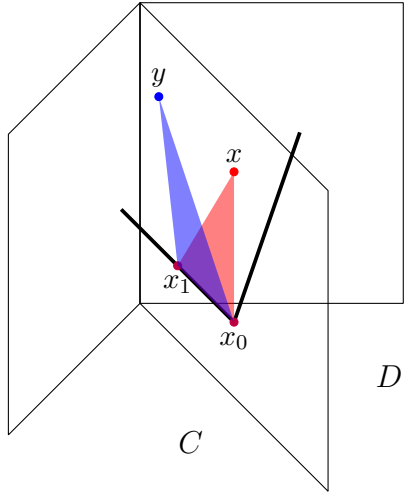

We are ready to define our flag signs.

\begin{definition}\label{def:sign}
    Let $P$ be a $k$-dimensional polytope in $\RR^d$ and let $F \in \flags(P)$. By \Cref{cl:sign} the set of cones $C \in \Phi$ such that $\relint F_i \cap C \neq \emptyset$ for all $i \in [0,k]$ is not empty, and by \Cref{cl:meet_closed} it is closed under intersections. Denote by $\sign_\Phi(F)$ the intersection of this set. This cone will be called the sign of the flag $F$. It follows from \Cref{cl:sign,cl:meet_closed} that $\dim \sign_\Phi F \geq k$.
    
    Denote by $\signedflags{C}(P)$ the set of flags of $P \cap \lin C$ with sign $C$. 
\end{definition}

\subsection{A flip lemma}\label{sec:flip_lemma}

In this subsection we prove \Cref{lem:unique_proj}, which will be of use later. Essentially, this lemma tells us that given any polytope $P$ and a flag $F \in \flags(P)$, there is a unique edge containing $F_0$ such that projecting $F$ along this edge results in a flag. To prove this lemma, we recall the definition of a \textit{flip}.

Recall that for any polytope $P$, $i \in [0,\dim P-1]$, and two faces $F_{i-1} \in \faces{i-1}(P), F_{i+1} \in \faces{i+1}(P)$ with $F_{i-1} \subseteq F_{i+1}$, there are exactly two $i$-dimensional faces $H \in \faces{i}(P)$ such that $F_{i-1} \subseteq H \subseteq F_{i+1}$. This property is called the \textit{diamond property} of polytopes, and will be widely used in this work.
Therefore, given a flag $F \in \flags(P)$, there is a unique flag $r_i F \in \flags(P)$ which differs from $F$ exactly in its $i$th coordinate. We call the operators $(r_i)_{i=0}^{\dim P-1}$ \textit{flips}, and the group generated by them is called the \textit{monodromy group} of $P$.
The same holds for polyhedra as well, except in dimension $i=0$: in this case, there may be a unique vertex $F_0$ between $F_{-1} = \emptyset$ and the chosen $F_1 \in \faces{1}(P)$. Thus for polyhedra, all the flips $r_1,...,r_{\dim P - 1}$ 
We start this subsection with a claim about the interaction of flips of flags and inclusion.

\begin{claim}\label{cl:ladder}
    Let $P$ be a $d$-dimensional polytope, $F \in \flags(P)$ and let $i,j \in [0,d-1]$ with $i \leq j$. Then $F_i \not\subseteq (r_j ... r_{i+1} r_i F)_j$.
\end{claim}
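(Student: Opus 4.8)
The plan is to prove the claim by induction on $j-i \ge 0$. The base case $j=i$ is immediate: since $P$ is a polytope, the flip $r_i$ is defined for every $i \in [0,d-1]$, and $(r_i F)_i$ is by construction an $i$-dimensional face of $P$ distinct from $F_i$; as two distinct faces of equal dimension cannot contain one another, $F_i \not\subseteq (r_i F)_i$.

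For the inductive step I would assume $i<j$ and that the claim holds for the pair $(i,j-1)$. Write $H' = r_{j-1}\cdots r_i F$ and $H = r_j H' = r_j \cdots r_i F$. Since each flip $r_k$ changes only the $k$-th face of a flag, the flips $r_i,\dots,r_{j-1}$ leave the $j$-th face untouched, so $H'_j = F_j$; likewise $H_{j-1} = H'_{j-1}$ because $r_j$ does not affect dimension $j-1$. Put $G := H'_{j-1} = H_{j-1}$ and $G' := H_j$. By the definition of $r_j$ we have $G' \ne H'_j = F_j$, and since $H$ and $H'$ are flags, $G \subseteq F_j$, $G \subseteq G'$, and $\dim G = j-1$.

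The key step is to observe that $G$ must equal the intersection $F_j \cap G'$: this intersection is a face of $P$, and being a proper face of the $j$-dimensional face $F_j$ (proper since $F_j \ne G'$) it has dimension at most $j-1$, while it contains the $(j-1)$-dimensional face $G$; hence $F_j \cap G' = G$. Now suppose for contradiction that $F_i \subseteq G' = (r_j\cdots r_i F)_j$. As $F$ is a flag we also have $F_i \subseteq F_j$, so $F_i \subseteq F_j \cap G' = G = (r_{j-1}\cdots r_i F)_{j-1}$, contradicting the inductive hypothesis for $(i,j-1)$. This closes the induction.

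I do not expect a genuine obstacle here; the only delicate point is the bookkeeping of which flips move which faces --- in particular checking that after $r_i,\dots,r_{j-1}$ the face in dimension $j$ is still $F_j$, that $r_j$ then yields a face $G'$ sharing with $F_j$ exactly the codimension-one face $G$, and that this $G$ is precisely the dimension-$(j-1)$ face of $H'$ to which the inductive hypothesis applies. The background facts invoked are all standard: the diamond property (so the flips are well defined), that an intersection of faces of a polytope is a face, and that a proper face has strictly smaller dimension.
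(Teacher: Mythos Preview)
Your proof is correct. The approach differs from the paper's, though both rely on the same underlying polytope facts. The paper argues by contradiction without explicit induction: denoting $F^k = r_k\cdots r_i F$, it assumes $F_i \subseteq F^j_j$, picks the maximal $k_0$ with $F_{k_0} \subseteq F^j_j$, and then shows that both $F_{k_0}$ and $F^{k_0}_{k_0}$ lie in $F^j_j$; since these two distinct $k_0$-faces together span $F_{k_0+1}$, this forces $F_{k_0+1} \subseteq F^j_j$, contradicting maximality of $k_0$. Your argument replaces this ``climb from $k_0$'' by an induction on $j-i$, isolating the single clean observation that the two $j$-faces $F_j$ and $G' = (r_j H')_j$ intersect exactly in the $(j-1)$-face $G = H'_{j-1}$, so $F_i \subseteq G'$ would force $F_i \subseteq G$ and trigger the inductive hypothesis. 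Your route is marginally more streamlined and packages the bookkeeping more transparently; the paper's version avoids setting up an induction but needs the extra ``span'' step to propagate the inclusion upward. Either way, the substance is the same: two distinct faces of equal dimension meet in a face of strictly smaller dimension, and tracking which face that is yields the contradiction.
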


\begin{proof}
    For any $k \in [i,j]$ denote $F^k = r_k ... r_{i+1} r_i F \in \flags(P)$, see \Cref{fig:flips}. First note that for any $k \in [i,j]$, $F^k_k \neq F_k$, as $F_k = F^{k-1}_k$ and $F^k = r_k F^{k-1}$ and so $F^k$ differs from $F^{k-1}$ in its $k$-th face. Assume towards contradiction that $F_i \subseteq F^j_j$. Denote $k_0 = \max \setdef{k \in [0,j]}{F_k \subseteq F^j_j}$, then $k_0 \geq i$ by hypothesis, and $k_0 < j$ since $F_j \neq F^j_j$ and two distinct $j$-dimensional faces cannot contain one another. However, we claim that $F^{k_0}_{k_0} \subseteq F^j_j$ and $F^{k_0}_{k_0} \subseteq F_{k_0+1}$. Indeed, $F^{k_0}_{k_0} = F^j_{k_0} \subseteq F^j_j$ since $r_{k_0+1},...,r_j$ do not change the $k_0$-dimensional face, and similarly $F^{k_0}_{k_0} \subseteq F^{k_0}_{k_0 + 1} = F_{k_0+1}$ since $r_i,...,r_{k_0}$ do not change the $(k_0+1)$-dimensional face.

    Note that $\supp_P(F_{k_0} \vee F^{k_0}_{k_0}) = F_{k_0 + 1}$, since $F_{k_0}, F^{k_0}_{k_0}$ are distinct faces contained in $F_{k_0+1}$, and $\dim F_{k_0+1} = k_0+1$ while $\dim F_{k_0} = \dim F^{k_0}_{k_0} = k_0$. Therefore, since $F^j_j \supseteq F_{k_0}, F^{k_0}_{k_0}$, we have $F^j_j \supseteq \supp_P(F_{k_0} \vee F^{k_0}_{k_0}) = F_{k_0+1}$, in contradiction to the choice of $k_0$. 
\end{proof}

\begin{figure}\centering
\begin{tikzpicture}[scale=2,line join=bevel]
    \coordinate (F1) at (0,0);
    \coordinate (F2) at (-1,1);
    \coordinate (F2') at (1,1);
    \coordinate (F3) at (0,2);
    \coordinate (F3') at (2,2);
    \coordinate (F4) at (1,3);
    \coordinate (F4') at (3,3);
    \coordinate (F5) at (2,4);
    \coordinate (F5') at (4,4);
    \coordinate (A) at (-0.1,0);
    \coordinate (B) at (-0.2,0);
    \coordinate (C) at (-0.3,0);
    \coordinate (NW) at (-1,1);
    \coordinate (NE) at (1,1);
    \coordinate (OFFSET) at (-1,-1);
    
    \node[anchor=north] at (F1) {$F_{i-1}$};
    \node at (F2) {$F_i$};
    \node[anchor=north west] at (F2') {$(F^i)_i$};
    \node[anchor=west] at (F3) {$F_{i+1}$};
    \node[anchor=west] at (F3') {$\dots$};
    \node[anchor=west] at (F4) {$\dots$};
    \node[anchor=west] at (F4') {$(F^{j-1})_{j-1}$};
    \node[anchor=west] at (F5) {$F_j$};
    \node[anchor=west] at (F5') {$(F^j)_j$};
    
    \draw[red] (C) -- ++(NW) -- ++(NE) -- ++ (NE) -- ++(NE);
    \draw[red] (B) -- ++(NE) -- ++(NW) -- ++ (NE) -- ++(NE);
    \draw[red] (A) -- ++(NE) -- ++(NE) -- ++ (NE) -- ++(NW);
    \draw[red] (F1) -- ++(NE) -- ++(NE) -- ++ (NE) -- ++(NE);
    
    \node at ($(F1)!0.5!(F2)+0.3*(OFFSET)$) {\color{red} $F$};
    \node at ($(F2')!0.5!(F3)+0.25*(OFFSET)$) {\color{red} $F^i$};
    \node at ($(F4')!0.5!(F5)+0.25*(OFFSET)$) {\color{red} $F^{j-1}$};
    \node at ($(F4')!0.5!(F5')-0.1*(NW)$) {\color{red} $F^j$};
    
\end{tikzpicture}\caption{The flags {\color{red} $F, F^i, ..., F^j$}, represented by lines passing through their respective faces.}\label{fig:flips}
\end{figure}
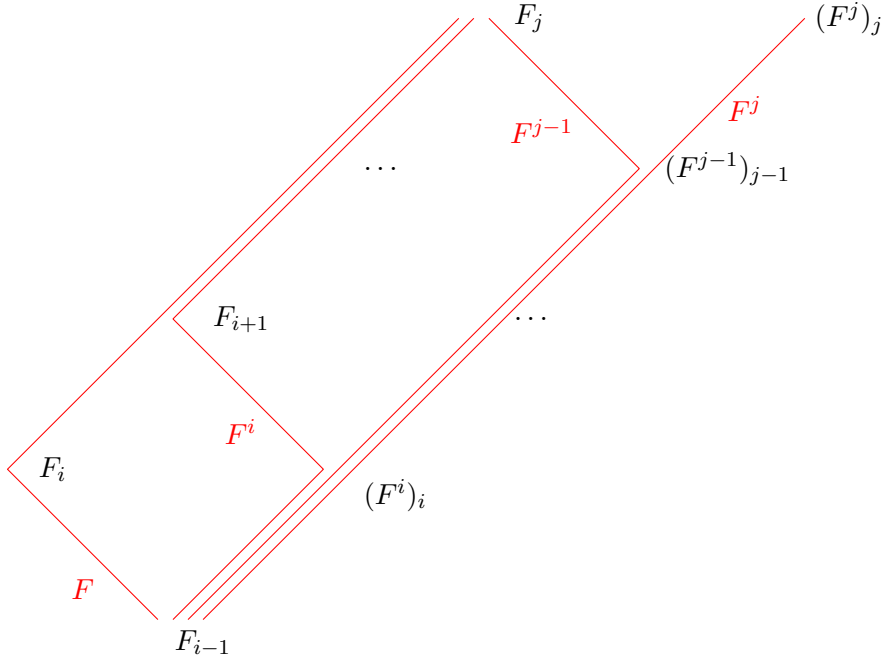

The main lemma of this subsection is as follows.

\begin{lemma}\label{lem:unique_proj}
    Let $P \subseteq \RR^d$ be a $d$-dimensional polytope and let $F \in \flags(P)$. There exists a unique edge $E \in \faces{1}(P)$ that contains $F_0$ and such that $(\supp_{\proj_{E^\perp} P} \proj_{E^\perp} F_i)_{i = 0}^{d-1}$ is a flag of $\proj_{E^\perp} P$. Furthermore,
    \[
        E = (r_1 r_2... r_{d-1} F)_1 .
    \]
\end{lemma}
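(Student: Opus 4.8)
The plan is to reduce the statement to a purely combinatorial condition on the edge $E$, namely
\[
    \dim \supp_P(F_i \vee E) = i+1 \qquad \text{for all } i \in [0,d-1],
\]
and then to exhibit and recognise the unique edge satisfying it. First I would carry out the geometric reduction. Fix an edge $E \ni F_0$, let $u$ be a direction vector of $E$ so that $E^\perp = u^\perp$, write $\pi = \proj_{E^\perp}$, and note $\pi P$ is a $(d-1)$-dimensional polytope. For $\ell \in E^\perp$ and $x \in P$ one has $\langle \pi x, \ell\rangle = \langle x, \ell\rangle$, hence $\pi(P(\ell)) = (\pi P)(\ell)$, every face of $\pi P$ has this form, and $\pi(F_i) \subseteq \pi(P(\ell))$ iff $F_i \subseteq P(\ell)$ iff $\ell \in N_P F_i$. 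Combining this with the standard facts that $(\pi P)(\ell_1)\cap(\pi P)(\ell_2) = (\pi P)(\ell_1+\ell_2)$ and that each point lies in the relative interior of a unique cone of the normal fan of $\pi P$, I would deduce that $\supp_{\pi P}\pi(F_i) = \pi(P(\ell_i))$ for any $\ell_i$ in the relative interior of $N_P F_i \cap E^\perp$, and that the normal cone of this face in $\pi P$ equals $N_P F_i \cap E^\perp$. Since $F_0 \subseteq F_i$ gives $N_P F_i \subseteq N_P F_0$ and $N_P E = N_P F_0 \cap u^\perp \subseteq E^\perp$, we get $N_P F_i \cap E^\perp = N_P F_i \cap N_P E = N_P(\supp_P(F_i \vee E))$, which has dimension $d - \dim\supp_P(F_i\vee E)$; as $\pi P$ is $(d-1)$-dimensional this yields $\dim \supp_{\pi P}\pi(F_i) = \dim\supp_P(F_i\vee E) - 1$. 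Since $(\supp_{\pi P}\pi(F_i))_{i=0}^{d-1}$ is an increasing chain of $d$ faces in the $(d-1)$-polytope $\pi P$, it is a flag exactly when its members have dimensions $0,1,\dots,d-1$, i.e. exactly when the displayed condition holds. This reduces the lemma to finding the unique edge $E \ni F_0$ with that property and showing it equals $(r_1\cdots r_{d-1}F)_1$.

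Next I would analyse the candidate edge. Set $\hat F := r_1 r_2 \cdots r_{d-1} F$ and $E := \hat F_1$; this is an edge through $F_0$, since none of $r_1,\dots,r_{d-1}$ changes the $0$-face. Writing $F^{(k)} := r_k r_{k+1}\cdots r_{d-1}F$ (so $F^{(d)}=F$, $F^{(k)}=r_kF^{(k+1)}$, $\hat F = F^{(1)}$), a routine induction using only the diamond property shows that for $j\in[1,d-1]$ we have $\hat F_j = F^{(j)}_j$, $F^{(j+1)}_{j-1} = F_{j-1}$, $F^{(j+1)}_j = F_j$, and $F^{(j+1)}_{j+1} = \hat F_{j+1}$; consequently $\hat F_j$ is precisely the $j$-face distinct from $F_j$ lying between $F_{j-1}$ and $\hat F_{j+1}$. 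In particular $\hat F$ is a flag with $\hat F_0 = F_0$, $\hat F_d = P$, $\hat F_j \ne F_j$ for $j\in[1,d-1]$, and (reading the relation one index up) $F_i \subseteq \hat F_{i+1}$ for all $i\in[0,d-1]$. I also need $E = \hat F_1 \not\subseteq F_i$ for all $i\le d-1$; since $F_i \subseteq F_{d-1}$ it is enough to see $\hat F_1 \not\subseteq F_{d-1}$, which is exactly \Cref{cl:ladder} applied to the flag $\hat F$ with $i=1$, $j=d-1$, because $r_{d-1}r_{d-2}\cdots r_1\hat F = F$ (the flips telescope). Granting this, for each $i\in[0,d-1]$ we have $F_i\subseteq \hat F_{i+1}$ and $E\subseteq \hat F_{i+1}$, so $\supp_P(F_i\vee E)\subseteq \hat F_{i+1}$; and $E\not\subseteq F_i$ forces $\supp_P(F_i\vee E)\supsetneq F_i$, hence $\dim\supp_P(F_i\vee E)\ge i+1$. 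Therefore $\supp_P(F_i\vee E) = \hat F_{i+1}$ has dimension $i+1$, so by the first step $E = (r_1\cdots r_{d-1}F)_1$ has the required property.

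For uniqueness, let $E'\ni F_0$ be any edge with the property; by the first step $\dim\supp_P(F_i\vee E') = i+1$ for $i\in[0,d-1]$. Put $F'_0 := F_0$ and $F'_j := \supp_P(F_{j-1}\vee E')$ for $j\in[1,d]$; then $(\emptyset,F'_0,\dots,F'_d)$ is a flag of $P$ with $F'_1 = E'$ and $F'_d = P$. For $j\in[1,d-1]$, if $F'_j = F_j$ then $E'\subseteq F_j$ and $\supp_P(F_j\vee E') = F_j$ would have dimension $j\ne j+1$, a contradiction, so $F'_j\ne F_j$. Since $F_{j-1}\subseteq F_j\subseteq F'_{j+1}$ and $F_{j-1}\subseteq F'_j\subseteq F'_{j+1}$, the diamond property gives that $\{F_j,F'_j\}$ is exactly the set of $j$-faces between $F_{j-1}$ and $F'_{j+1}$; thus $F'_j$ is the unique such face distinct from $F_j$ — and the same description characterises $\hat F_j$ via $F_{j-1}$ and $\hat F_{j+1}$. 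A downward induction from $F'_d = P = \hat F_d$ then gives $F'_j = \hat F_j$ for all $j$, and in particular $E' = F'_1 = \hat F_1 = E$.

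I expect the main obstacle to be the geometric reduction in the first step: correctly identifying $\supp_{\proj_{E^\perp}P}\proj_{E^\perp}F_i$ and, crucially, its normal cone and dimension inside $\proj_{E^\perp}P$, while keeping straight how intersections of faces, of normal cones, and of cones of the two normal fans interact (in particular that $N_P F_i \cap E^\perp$ really is the normal cone of a face of $\proj_{E^\perp}P$ of the claimed dimension). Once that translation is in hand, the remaining work is essentially the bookkeeping of which face each $F^{(k)}$ carries in each dimension, together with an application of the diamond property, and the only mildly delicate point — $\hat F_1 \not\subseteq F_{d-1}$ — is handed to us by \Cref{cl:ladder}.
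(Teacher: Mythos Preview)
Your argument is correct. The route is closely parallel to the paper's, just organized differently. The paper first proves the dual statement, \Cref{lem:unique_face} (unique \emph{facet} $G\ni F_0$ with $(F_i\cap G)_i$ a flag), by exactly the flip/diamond/\Cref{cl:ladder} bookkeeping you carry out in your steps two and three; it then deduces \Cref{lem:unique_proj} by the order-reversing bijection $m_P\colon L\mapsto N_P L$ between faces of $P$ through $F_0$ and faces of $N_P F_0$, which turns ``projection along $E$ yields a flag'' into ``intersection with the facet $m_P(E)$ yields a flag''. Your first step performs that same normal-cone translation, but instead of landing in the dual polyhedron and quoting \Cref{lem:unique_face} you record the condition back in $P$ as $\dim\supp_P(F_i\vee E)=i+1$ and then redo the existence/uniqueness argument primally. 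So the content is the same --- same normal-cone identity $N_{\pi P}\bigl(\supp_{\pi P}\pi F_i\bigr)=N_PF_i\cap E^\perp=N_P\bigl(\supp_P(F_i\vee E)\bigr)$, same appeal to \Cref{cl:ladder} (via the telescoping $r_{d-1}\cdots r_1\hat F=F$), same diamond-property downward induction for uniqueness --- while the paper gains a clean modular split into \Cref{lem:unique_face} plus a short duality paragraph, and your version gains being self-contained in the primal without an auxiliary lemma.
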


Saying that the projection of a flag, in a certain direction $n$, results in a flag, is actually a statement about the interaction of the direction $n$ and the normal cones to the faces in the flag, which form a flag themselves (i.e. they are nested). Thus \Cref{lem:unique_proj} is proven by first showing the dual statement:

\begin{lemma}\label{lem:unique_face}
    Let $P \subseteq \RR^d$ be a $d$-dimensional polyhedron with faces of all dimensions, and let $F \in \flags(P)$. There exists a unique facet $G \in \faces{d-1}(P)$ that contains $F_0$ and such that $(F_i \cap G)_{i \in [d]}$ is a flag of $G$. Furthermore,
    \[
        G = (r_{d-1} r_{d-2} ... r_1 F)_{d-1} .
    \]
\end{lemma}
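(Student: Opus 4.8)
First I would show that the facet produced by the ``furthermore'' clause works, and then that it is the only one. Write $F^{0}=F$ and $F^{k}=r_{k}r_{k-1}\cdots r_{1}F$ for $k\in[1,d-1]$; since $P$ is a polyhedron with faces of all dimensions, each of the flips $r_{1},\dots,r_{d-1}$ is well defined, so every $F^{k}$ is a genuine flag of $P$. Put $G:=F^{d-1}_{d-1}$, a $(d-1)$-dimensional face. The bookkeeping I would rely on is that $r_{1},\dots,r_{k}$ move only the faces of dimensions $1,\dots,k$; hence $F^{k}_{m}=F_{m}$ whenever $m=0$ or $m>k$, and $F^{d-1}_{m}=F^{m}_{m}$ for every $m\le d-1$. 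In particular, for each $i\in[d]$,
\[
    F^{d-1}_{i-1}=F^{i-1}_{i-1}\subseteq F^{i-1}_{i}=F_{i},
\]
the inclusion because $F^{i-1}$ is a flag and the last equality because $r_{1},\dots,r_{i-1}$ fix the $i$-dimensional face. Since also $F^{d-1}_{i-1}\subseteq F^{d-1}_{d-1}=G$, we obtain an $(i-1)$-dimensional face contained in $F_{i}\cap G$; for $i=1$ this already gives $F_{0}=F^{d-1}_{0}\subseteq G$.

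For the matching upper bound I would invoke \Cref{cl:ladder} with $(i,j)=(1,d-1)$ (its proof uses only flips of index $\ge 1$, so it is valid for polyhedra with faces of all dimensions), which gives $F_{1}\not\subseteq(r_{d-1}\cdots r_{1}F)_{d-1}=G$. As $F_{1}\subseteq F_{i}$ for all $i\ge1$, this forces $F_{i}\not\subseteq G$, so $F_{i}\cap G$ is a proper face of the $i$-dimensional face $F_{i}$ and $\dim(F_{i}\cap G)\le i-1$. Combined with the previous paragraph this yields $\dim(F_{i}\cap G)=i-1$ for all $i\in[d]$, so $(F_{i}\cap G)_{i\in[d]}$ is a flag of $G$; thus $G$ has the required property and equals $(r_{d-1}\cdots r_{1}F)_{d-1}$.

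For uniqueness, let $G'$ be any facet with $F_{0}\subseteq G'$ and $\dim(F_{i}\cap G')=i-1$ for all $i\in[d]$, and set $H_{j}:=F_{j+1}\cap G'$ for $j\in[0,d-1]$. This $H_{j}$ is a face of $P$ of dimension $j$, with $H_{0}=F_{0}$ and $H_{d-1}=G'$, and $H_{j-1}\subseteq H_{j}$ since $F_{j}\subseteq F_{j+1}$. Define $\tilde F^{0}=F$ and $\tilde F^{j}=(\emptyset,H_{0},\dots,H_{j},F_{j+1},\dots,F_{d})$ for $j\in[1,d-1]$; each is a flag, the only non-obvious inclusion being $H_{j}\subseteq F_{j+1}$. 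I would then check that $\tilde F^{j}=r_{j}\tilde F^{j-1}$: the two flags differ only in their $j$-face, and $H_{j}\ne F_{j}$ because $\dim(F_{j}\cap G')=j-1$ forces $F_{j}\not\subseteq G'$; since $H_{j-1}\subseteq F_{j},H_{j}\subseteq F_{j+1}$ are two distinct $j$-faces between the $(j-1)$-face $H_{j-1}$ and the $(j+1)$-face $F_{j+1}$, the diamond property identifies $H_{j}$ as the flipped face. Iterating, $\tilde F^{d-1}=r_{d-1}\cdots r_{1}F$, so $G'=H_{d-1}=\tilde F^{d-1}_{d-1}=(r_{d-1}\cdots r_{1}F)_{d-1}=G$.

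The hard part will be the lower bound $\dim(F_{i}\cap G)\ge i-1$: it is easy to see $F_{i}\cap G$ is small, but to rule out it being \emph{too} small one has to exhibit an explicit $(i-1)$-face inside it, and the correct one, $F^{i-1}_{i-1}$, only surfaces after carefully tracking which flips move which faces. Everything else is routine use of the diamond property and of the fact that an intersection of faces is a face; the only point needing a remark is that \Cref{cl:ladder}, stated for polytopes, is applied here to a polyhedron, which is harmless.
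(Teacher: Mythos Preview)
Your proof is correct and follows essentially the same approach as the paper: both define $F^{k}=r_{k}\cdots r_{1}F$, set $G=F^{d-1}_{d-1}$, show $F^{d-1}_{i-1}=F_{i}\cap G$ via the same bookkeeping and \Cref{cl:ladder}, and then use the diamond property for uniqueness. Your uniqueness argument is packaged slightly differently---you show directly that any admissible $G'$ forces the flag $(F_{i}\cap G')_{i}$ to coincide with $F^{d-1}$ via successive flips, whereas the paper compares $F_{i}\cap G'$ to $F_{i}\cap G$ by induction---but the underlying diamond step is identical, and your version has the small advantage of not needing the existence part as input. Your single invocation of \Cref{cl:ladder} with $(i,j)=(1,d-1)$, propagating $F_{1}\not\subseteq G$ to all $F_{i}\not\subseteq G$, is a mild simplification over the paper's per-index use; your remark that \Cref{cl:ladder} extends to polyhedra is also a point the paper leaves implicit.
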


\begin{proof}
    Let $G = (r_{d-1} r_{d-2} ... r_1 F)_{d-1}$, we will show it is the unique facet of $P$ that contains $F_0$ and such that $(F_k \cap G)_{k \in [d]} \in \flags(G)$. For any $i \in [d-1]$ denote $F^{i} = r_i ... r_2 r_1 F \in \flags(P)$, and also denote $F^{0} = F$. In this notation, $G = (F^{d-1})_{d-1}$. Note that we have $(F^{i})_j = (F^{k})_j$ for any $i,j,k \in [0,d-1]$ such that $i,k < j$ or $i,k \geq j$, since $r_l$ only changes the face in dimension $l$, for any $l$.
    First let us show that for any $i \in [0,d-2]$,
    \[
        (F^{i})_i = F_{i+1} \cap G .
    \]
    Indeed, by the above, $(F^{i})_i = (F^{d-1})_i \subseteq (F^{d-1})_{d-1} = G$, and also $(F^{i})_i \subseteq (F^i)_{i+1} = (F^{0})_{i+1} = F_{i+1}$, so $(F^{i})_i \subseteq F_{i+1} \cap G$. But $F_{i+1} \not\subseteq G$ by \Cref{cl:ladder}, which implies that $\dim F_{i+1} \cap G \leq i$, and of course $\dim F^{i}_i = i$. Therefore they are equal. 
    
    Specifically, using $i=0$, we get $F_0 = (F^{0})_0 = F_1 \cap G$ and so $F_0 \subseteq G$. Considering $0 \leq i \leq d-2$, we get that $F_{i+1} \cap G = (F^{i})_i = (F^{d-1})_i$, and since $F_d \cap G = P \cap G = (F^{d-1})_{d-1}$, we see that $(F_i \cap G)_{i \in [d]}$ is just $((F^{d-1})_i)_{i=0}^{d-1}$, which is a flag of $G$ by construction.

    Next, let $G'$ be a facet of $P$ such that $F_0 \subseteq G'$ and assume that $(F_i \cap G')_{i \in [d]}$ is a flag of $G'$. We will show that for any $i \in [0,d]$, $F_i \cap G' = F_i \cap G$ by induction on $i$.
    
    Indeed, for $i=0$ the condition is guaranteed since $F_0 \subseteq G'$ by hypothesis. Assume that $F_i \cap G' = F_i \cap G$, we will show that $F_{i+1} \cap G' = F_{i+1} \cap G$. The face $F_i \cap G'$ is of dimension $i-1$ due to its location in the sequence of faces $(F_j \cap G')_{j \in [d]}$, which is a sequence of faces of dimensions $0,1,2,...,d-1$. Therefore $F_i \not\subseteq G'$, since if $F_i \subseteq G'$ then $\dim F_i \cap G' = \dim F_i = i$, a contradiction. By the same argument, $F_i \not\subseteq G$. Additionally, both $F_{i+1} \cap G'$ and $F_{i+1} \cap G$ are faces of $P$ contained in $F_{i+1}$ and containing $F_i \cap G' = F_i \cap G$. By the diamond property there are exactly two faces of $P$ between $F_i \cap G'$ and $F_{i+1}$. Note that $F_i$ is one of these faces, and that neither $F_{i+1} \cap G'$ nor $F_{i+1} \cap G$ can equal $F_i$ since that would imply $F_i \subseteq G'$ or $F_i \subseteq G$. Thus $F_{i+1} \cap G' = F_{i+1} \cap G$. 

    Using $i=d$ we get $G' = F_d \cap G' = F_d \cap G = G$.
\end{proof}

\begin{proof}[Proof of \Cref{lem:unique_proj}]
    This proof is a standard duality argument: for any polytope $P \subseteq \RR^d$ and $k \in [0,d]$ there is a bijection
    \[
        m_P: \faces{k}(P)_{\geq F_0} \to \faces{d-k}(N_P F_0)
    \]
    given by $L \mapsto N_P L$, where $\faces{k}(P)_{\geq F_0}$ denotes the set of faces of $P$ containing $F_0$ of dimension $k$. The inverse map given by $G \mapsto (F_0+G^\perp) \cap P$. Further, for any $E \in \faces{1}(P)_{\geq F_0}$ and $L \in \faces{k}(P)_{\geq F_0}$, denoting $L' = \proj_{E^\perp} L$ and $P' = \proj_{E^\perp} P$,
    \begin{equation}\label{eq:duality}
        m_{P'}(\supp_{P'} L') \cap E^\perp = m_P(L) \cap E^\perp,
    \end{equation}
    or in other words, $N_{P'} (\supp_{P'} L') \cap E^\perp = N_P L \cap E^\perp$.
    Indeed, for any $\nu \in E^\perp$ we have
    \[
        \nu \in N_P L \Leftrightarrow L \subseteq P(\nu) \Leftrightarrow L' \subseteq P'(\nu) \Leftrightarrow \supp_{P'} L' \subseteq P'(\nu) \Leftrightarrow \nu \in N_{P'} (\supp_{P'} L') .
    \]
    Since $m_{P'}$ is order-reversing, we see that $(\supp_{P'} \proj_{E^\perp} F_i)_{i=0}^{d-1}$ is a flag of $P'$ if and only if $(m_{P'} (\supp_{P'} \proj_{E^\perp} F_{d-i-1}) \cap E^\perp)_{i=0}^{d-1}$ is a flag of $N_P F_0 \cap E^\perp$. Using this fact, \eqref{eq:duality}, and then \Cref{lem:unique_face} with the flag $(m_P(F_{d-k}))_{k=0}^d$, we get existence and uniqueness of the edge $E$. Since this duality is order-reversing, we have $r_k \circ m = m \circ r_{d-k}$, where here $m$ is considered as a map $\setdef{G \in \flags(P)}{G_0 = F_0} \to \flags(N_P F_0)$ acting facewise.
    This proves the 'furthermore' statement.
\end{proof}

\subsection{Some properties of locally anti-blocking polytopes}\label{sec:properties}

In this subsection we recall several useful propositions, and the definition of the graph $\graph{P}$ which encodes useful combinatorial data about the polytope $P$. We start with the following proposition, rephrased in the language of the standard fan $\phist$.

\begin{prop}[{\cite[Proposition 8]{sanyal2023kalai}}]\label{prop:face_cant_cross}
    Let $P$ be a locally anti-blocking polytope in $\RR^d$, $F \in \faces(P)$, and $C,D \in \phist$. If $\relint F \cap \relint C \neq \emptyset$ and $\relint F \cap \relint D \neq \emptyset$ then $N_P F \subseteq \lin (C \cap D)$.
\end{prop}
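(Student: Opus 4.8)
The plan is to reduce the statement to one elementary property of anti-blocking polytopes. If $Q$ is anti-blocking in the closed orthant $\RR^d_{\geq 0}$ (that is, $x \in Q$ and $0 \leq y \leq x$ coordinatewise force $y \in Q$) and $w \in \RR^d$ has $w_i < 0$, then $Q(w) \subseteq \{z : z_i = 0\}$, since from any maximizer $x$ of $\langle \cdot, w\rangle$ over $Q$ with $x_i > 0$ one may decrease the $i$-th coordinate to $0$, staying in $Q$ and strictly increasing the functional. Because $P$ is locally anti-blocking, for each sign vector $\varepsilon \in \{\pm 1\}^d$ the section $P \cap \RR^d_\varepsilon$, where $\RR^d_\varepsilon = \{z : \varepsilon_i z_i \geq 0 \ \forall i\}$, is anti-blocking in $\RR^d_\varepsilon$ (up to the coordinate reflection taking $\RR^d_\varepsilon$ to $\RR^d_{\geq 0}$), so this fact is available for $P$ in every orthant.

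Next I would establish the sign-compatibility of a face with its normal directions: \emph{if $\nu \in N_P F$ then $z_i \nu_i \geq 0$ for every $z \in F$ and every $i \in [d]$}. Given $z \in F$, pick $\varepsilon$ with $\varepsilon_i = \sign z_i$ wherever $z_i \neq 0$, so that $z \in P \cap \RR^d_\varepsilon$. Since $\nu \in N_P F$ we have $F \subseteq P(\nu)$, so $z$ maximizes $\langle \cdot, \nu\rangle$ over $P$, hence over $P \cap \RR^d_\varepsilon$. If $\varepsilon_i \nu_i < 0$ for some index with $z_i \neq 0$, the fact above contradicts maximality; thus $\varepsilon_i \nu_i \geq 0$, i.e.\ $z_i \nu_i = (\varepsilon_i \nu_i)\abs{z_i} \geq 0$, when $z_i \neq 0$, and $z_i \nu_i = 0$ otherwise. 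Equivalently, $F$ lies in the union of closed orthants $\{z : z_i \nu_i \geq 0 \ \forall i\}$; moreover, whenever $\nu_i \neq 0$ this is a halfspace with bounding hyperplane $\{z_i = 0\}$, so any point of $\relint F$ with vanishing $i$-th coordinate forces $F \subseteq \{z_i = 0\}$.

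Now fix $\nu \in N_P F$ and choose $x \in \relint F \cap \relint C$ and $y \in \relint F \cap \relint D$; let $\sigma, \tau \in \{+,-,0\}^d$ be the sign vectors of $C$ and $D$, so $\sign x_i = \sigma_i$ with $x_i = 0$ exactly when $\sigma_i = 0$, and similarly for $y$ and $\tau$. The cone $C \cap D$ has sign vector $\rho$ with $\rho_i = \sigma_i$ if $\sigma_i = \tau_i$ and $\rho_i = 0$ otherwise, so $\lin(C \cap D) = \RR^{\{i : \rho_i \neq 0\}}$, and it suffices to show $\nu_i = 0$ whenever $\rho_i = 0$. If $\sigma_i$ and $\tau_i$ are both nonzero they must be opposite, say $x_i > 0 > y_i$; then $x_i \nu_i \geq 0$ gives $\nu_i \geq 0$ and $y_i \nu_i \geq 0$ gives $\nu_i \leq 0$, hence $\nu_i = 0$. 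In the remaining case one of $\sigma_i, \tau_i$ is zero — say $\sigma_i = 0$, so $x_i = 0$ with $x \in \relint F$ — and one rules out $\nu_i \neq 0$ by combining the last observation of the previous paragraph (which, were $\nu_i \neq 0$, would force $F \subseteq \{z_i = 0\}$) with a closer analysis of the orthant sections of $P$ that meet $F$. Running this over all $i$ with $\rho_i = 0$ gives $\nu \in \RR^{\{i : \rho_i \neq 0\}} = \lin(C \cap D)$, and since $\nu \in N_P F$ was arbitrary, $N_P F \subseteq \lin(C \cap D)$.

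The step I expect to be the main obstacle is precisely this last case: coordinates on which the face $F$ is pinned to a coordinate hyperplane while $N_P F$ could a priori extend in that direction. Away from it the argument is just the sign bookkeeping above; the delicate point is to exploit the anti-blocking structure of the orthant sections, together with the position of $F$ inside $P$, in order to control $N_P F$ at such coordinates.
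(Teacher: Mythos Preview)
The paper does not prove this proposition; it is quoted from \cite{sanyal2023kalai}, so there is no in-paper argument to compare against. Your sign-compatibility step is correct and is the heart of the matter: for $\nu \in N_P F$ and $z \in F$ one has $z_i \nu_i \geq 0$ for every $i$, and from this the case where $\sigma_i$ and $\tau_i$ are both nonzero (hence opposite) follows immediately, exactly as you wrote.

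The obstacle you flag---coordinates $i$ with $\sigma_i = 0$ or $\tau_i = 0$---is not a gap in your reasoning but a genuine failure of the statement as paraphrased here. Take $P = [-1,1] \times \lozenge^2 \subseteq \RR^3$ and the edge $F = [-1,1] \times \{e_2\} = \{(t,1,0) : -1 \le t \le 1\}$. With $C,D$ the two $2$-dimensional cones of $\phist$ whose relative interiors meet $\relint F$, one gets $\lin(C \cap D) = \{0\} \times \RR \times \{0\}$, whereas $N_P F = \{(0,\nu_2,\nu_3): \nu_2 \geq |\nu_3|\}$ is not contained in that line. Even more simply, $P = \lozenge^2$, $F = \{e_1\}$, $C = D = \RR_{\geq 0} e_1$ already fails. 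The intended statement is presumably for full-dimensional cones $C,D$ (open orthants), in which case $\sigma_i,\tau_i \in \{+,-\}$ for all $i$ and your Case~1 argument is the entire proof, with no residual case.

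This restricted version is also all that the paper actually uses. In the proof of \Cref{lem:one_step} one has points of $\relint G_k$ on both sides of a single coordinate hyperplane $n^\perp$, and the desired conclusion is $N_P G_k \subseteq n^\perp$. That is exactly your Case~1 applied to the one relevant coordinate, and your sign-compatibility inequality yields it directly. So your argument is complete for the correct (orthant) statement and for the application; the ``closer analysis'' you were searching for in the degenerate case does not exist, because the conclusion is simply false there.
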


\begin{remark}\label{rem:generalization}
    \Cref{prop:face_cant_cross} is in fact a characterization of locally anti-blocking polytopes: a polytope is locally anti-blocking if and only if it satisfies the conclusion of the proposition. This condition may be viewed as being ``symmetric w.r.t. $\phist$'', and we can consider the following generalization: for a fan $\Phi$, a polytope $P$ is ``$\Phi$-symmetric'' if for any $F \in \faces(P)$ and $C,D \in \Phi$, if $\relint F \cap \relint C \neq \emptyset$ and $\relint F \cap \relint D \neq \emptyset$, then $N_P F \subseteq \lin (C \cap D)$. 
\end{remark}

We next recall the definition of $\graph{P}$ from \cite{sanyal2023kalai}, see also \cite{reisner1991certain} for the 1-unconditional case.
Let $P$ be a proper locally anti-blocking polytope. Assume that $P$ has the following property: for any $J \subseteq [d]$ with $|J| = 2$ the intersection $P \cap \RR^J$ is either the Cartesian product $[a_1,b_1] \times [a_2,b_2]$ of some two segments in the coordinate axes in $\RR^J$ with $0$ in their relative interior, or it is the convex hull $[a_1,b_1] \vee [a_2,b_2]$ of two such segments. In the former case, we say $P \cap \RR^J$ is \textit{axis-aligned}, and in the latter case, we say it is \textit{a diamond}.
Under this condition, define $\graph{P}$ to be the graph on the vertices $[d]$ such that $\{i,j\}$ is an edge if and only if $P \cap \RR^{\{i,j\}}$ is axis-aligned. As seen in \cite[Lemma 19, Lemma 20]{sanyal2023kalai}, if $P$ minimizes the number of \textit{faces} then these graphs capture many combinatorial properties of $P$, and specifically they determine whether $P$ is a generalized Hanner polytope. We will show an analogous statement for the case where $P$ minimizes the number of flags, see \Cref{cl:hanner_characterization,cor:graph_determines_P}; these proofs are directly analogous to the ones in \cite{sanyal2023kalai}.

Recall that a graph is called a cograph if it is either a single vertex, a complement of a cograph, or a disjoint union of two cographs.
The following two claims will be of use.

\begin{prop}[{\cite[Proposition 12]{sanyal2023kalai}}]\label{prop:graph_calculation}
    Let $P$ and $Q$ be proper locally anti-blocking polytopes such that $\graph{P}, \graph{Q}$ is defined. Then
    \begin{enumerate}
        \item $\graph{P^\circ}$ is the complement graph of $\graph{P}$.
        \item $\graph{P \cap \RR^J}$ is the subgraph of $\graph{P}$ induced by the vertices $J \subseteq [d]$.
        \item $\graph{P \vee Q}$ is the disjoint union $\graph{P} \sqcup \graph{Q}$, where here $P$ and $Q$ are assumed to be embedded in orthogonal subspaces.
    \end{enumerate}
\end{prop}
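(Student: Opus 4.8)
The plan is to handle the three parts in order of increasing difficulty, in each case reducing the statement to an elementary computation on $2$-dimensional coordinate sections. Throughout, note that since $P$ (and $Q$) is proper, $0$ lies in its interior, so every section $P \cap \RR^{\{i,j\}}$ is genuinely $2$-dimensional and every $P \cap \RR^{\{i\}}$ is a segment with $0$ in its relative interior; also, for such a $2$-dimensional body ``axis-aligned'' and ``diamond'' are mutually exclusive options (a nondegenerate rectangle has no vertex on a coordinate axis, a diamond has all four of them there), so $\graph{\cdot}$ is unambiguous whenever defined. Part (2) is then a tautology: for $i,j \in J$ one has $(P \cap \RR^J) \cap \RR^{\{i,j\}} = P \cap \RR^{\{i,j\}}$, so $P \cap \RR^J$ is again a proper locally anti-blocking polytope whose $2$-dimensional coordinate sections (being sections of $P$) are all axis-aligned or diamonds; hence $\graph{P \cap \RR^J}$ is defined, and $\{i,j\}$ is one of its edges exactly when it is an edge of $\graph{P}$.

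For part (3), write $\RR^d = \RR^I \oplus \RR^{[d] \setminus I}$ with $P \subseteq \RR^I$ and $Q \subseteq \RR^{[d] \setminus I}$. I would first record that $P \vee Q$ is proper (it contains the convex hull of the points $\pm \varepsilon e_i$, $i \in [d]$, for $\varepsilon$ small, since $P$ and $Q$ are proper) and locally anti-blocking (projections commute with convex hulls, and a short coordinate bookkeeping gives $\proj_{\RR^K}(P \vee Q) = (P \cap \RR^K) \vee (Q \cap \RR^K) = (P \vee Q) \cap \RR^K$). The crux is to identify the $2$-dimensional coordinate sections: writing a point of $P \vee Q$ as $\lambda p + (1-\lambda) q$ with $p \in P$, $q \in Q$, $\lambda \in [0,1]$, the requirement that it lie in $\RR^{\{i,j\}}$ forces certain coordinates of $p$ and $q$ to vanish, and one reads off that $(P \vee Q) \cap \RR^{\{i,j\}}$ equals $P \cap \RR^{\{i,j\}}$ when $i,j \in I$ (using here $0 \in P$ and $0 \in Q$), equals $Q \cap \RR^{\{i,j\}}$ when $i,j \in [d] \setminus I$, and equals $(P \cap \RR^{\{i\}}) \vee (Q \cap \RR^{\{j\}})$ --- a diamond --- when $i$ and $j$ lie on opposite sides of the partition. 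Consequently the edges of $\graph{P \vee Q}$ are precisely those of $\graph{P}$ inside $I$ together with those of $\graph{Q}$ inside $[d] \setminus I$, i.e.\ $\graph{P \vee Q} = \graph{P} \sqcup \graph{Q}$.

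For part (1), the engine is the standard fact that polarity interchanges coordinate sections and coordinate projections: for a convex body $K$ with $0$ in its interior and a coordinate subspace $V$, $(\proj_V K)^{\circ} = K^\circ \cap V$, where the left polar is taken inside $V$. Applying this with $K = P$ and with $K = P^\circ$ and using the defining identity $\proj_{\RR^J} P = P \cap \RR^J$ of a locally anti-blocking polytope, one gets (taking a polar inside $\RR^J$) both that $P^\circ$ is locally anti-blocking and that $P^\circ \cap \RR^J = (P \cap \RR^J)^{\circ}$ for every coordinate subspace $\RR^J$. Taking $J = \{i,j\}$ reduces everything to the planar computation that, inside $\RR^{\{i,j\}}$, the polar of an axis-aligned rectangle $[a_1,b_1] \times [a_2,b_2]$ (with $a_k < 0 < b_k$) is the diamond with vertices $a_1^{-1} e_i, b_1^{-1} e_i, a_2^{-1} e_j, b_2^{-1} e_j$, and conversely the polar of such a diamond is an axis-aligned rectangle. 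In particular every $2$-dimensional coordinate section of $P^\circ$ is axis-aligned or a diamond, so $\graph{P^\circ}$ is defined, and $\{i,j\}$ is an edge of $\graph{P^\circ}$ iff $P \cap \RR^{\{i,j\}}$ is a diamond iff $\{i,j\}$ is a non-edge of $\graph{P}$; thus $\graph{P^\circ}$ is the complement of $\graph{P}$.

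The only non-formal ingredients are the closure of the class of proper locally anti-blocking polytopes under polarity and under $\vee$ of complementary coordinate blocks, together with the section--projection duality for polars; granting these, each of the three parts is a one-paragraph verification on $2$-dimensional sections. I would expect part (1) to be the part worth writing out most carefully, as it is the one that genuinely uses the interaction of the locally anti-blocking property with duality rather than merely with intersections.
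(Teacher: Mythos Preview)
The paper does not actually prove this proposition: it is stated with a citation to \cite[Proposition 12]{sanyal2023kalai} and no proof is supplied. Your argument is correct and self-contained; each of the three parts is reduced, as you say, to an elementary identification of the $2$-dimensional coordinate sections, and the supporting facts you invoke (closure of the locally anti-blocking class under polarity, the section--projection duality $(\proj_V K)^\circ = K^\circ \cap V$, and the behaviour of $P \vee Q$ on coordinate subspaces) are all standard and correctly applied. If anything, the verification that $(P \vee Q) \cap \RR^K = (P \cap \RR^{K\cap I}) \vee (Q \cap \RR^{K \cap ([d]\setminus I)})$ in part (3) deserves a sentence more than ``short coordinate bookkeeping'' (one needs $0 \in P$ and $0 \in Q$ to handle the degenerate $\lambda \in \{0,1\}$ cases cleanly), but the claim is true and you clearly have the right picture.
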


\begin{lemma}[{\cite[Theorem 2]{corneil1981complement}}]\label{lem:cograph_characterization}
    A graph is a cograph if and only if it does not contain a path of length 3 as an induced subgraph.
\end{lemma}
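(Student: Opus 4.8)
The plan is to prove both directions by induction on $n=|V(G)|$, using throughout that the path on four vertices is self-complementary, $\overline{P_4}\cong P_4$; consequently both ``$G$ is a cograph'' and ``$G$ is $P_4$-free'' are properties closed under complementation as well as under passing to induced subgraphs.

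For the forward direction I would assume $G$ is a cograph and induct along its recursive construction. A single vertex has no induced $P_4$. If $G=\overline H$ for a cograph $H$, an induced $P_4$ in $G$ would complement to an induced $\overline{P_4}=P_4$ in $H$, contradicting the inductive hypothesis. If $G=H_1\sqcup H_2$ is a disjoint union of cographs, any four vertices inducing a $P_4$ lie in a single $H_i$, since $P_4$ is connected while there are no edges between $H_1$ and $H_2$; this again contradicts the inductive hypothesis.

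For the converse I would induct on $n$, the case $n=1$ being trivial, and reduce everything to the claim that \emph{every $P_4$-free graph on at least two vertices is disconnected or has disconnected complement}. Granting this: if $G$ is disconnected, write $G=H_1\sqcup H_2$ and note each $H_i$ is a smaller $P_4$-free graph, hence a cograph by induction, so $G$ is a cograph; if instead $\overline G$ is disconnected, then $\overline G$ is $P_4$-free as well, so by the same argument $\overline G$ is a cograph and hence so is $G=\overline{\overline G}$.

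It remains to prove the claim, which I would also do by induction on $n$, with base case $n=2$ immediate. Let $n\ge 3$ and suppose for contradiction that $G$ is $P_4$-free with both $G$ and $\overline G$ connected. Delete a vertex $v$: by induction $G-v$ or $\overline{G-v}=\overline G-v$ is disconnected, and replacing $G$ by $\overline G$ if necessary (and carrying the $P_4$ we will produce back through $\overline{P_4}=P_4$) we may assume $G-v$ is disconnected, with components $C_1,\dots,C_k$, $k\ge 2$. Since $G$ is connected, $v$ has a neighbour in every $C_i$. If some $C_i$ also contained a non-neighbour of $v$, then, $C_i$ being connected, it would contain an edge $uu'$ with $u\not\sim v$ and $u'\sim v$; picking a neighbour $w$ of $v$ in another component, the four vertices $u,u',v,w$ would induce the path $u-u'-v-w$, contradicting $P_4$-freeness. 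Hence $v$ is adjacent to all other vertices, so $v$ is isolated in $\overline G$, contradicting connectedness of $\overline G$. I expect this last claim — choosing the right vertex to delete and spotting the $P_4$ it exposes — to be the only real obstacle; the surrounding argument is routine complementation bookkeeping, and while one could alternatively appeal to modular decomposition, the vertex-deletion argument keeps the proof elementary and self-contained.
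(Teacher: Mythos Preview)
The paper does not prove this lemma at all: it is quoted verbatim from \cite{corneil1981complement} and used as a black box. So there is no ``paper's own proof'' to compare against.

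That said, your argument is correct and self-contained. The forward direction is routine. For the converse, the heart is the claim that a $P_4$-free graph on $\ge 2$ vertices has either $G$ or $\overline G$ disconnected, and your vertex-deletion proof of this works: the induced $P_4$ you exhibit on $u,u',v,w$ really has exactly the edges $uu',\,u'v,\,vw$ (the remaining non-edges hold because $u,u'$ and $w$ lie in different components of $G-v$, and $u\not\sim v$ by choice), and the conclusion that $v$ is universal forces $\overline G$ to be disconnected. One small point of presentation: when you pass from $G$ to $\overline G$ in the ``WLOG'' step, you should note that the hypotheses are symmetric (both $G$ and $\overline G$ are $P_4$-free and both are assumed connected), so the swap is legitimate; you do say this, but it could be stated more explicitly. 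Otherwise the proof is a clean, elementary substitute for the citation.
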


\section{The inequality part of the main theorem}\label{sec:inequality}

For a proper locally anti-blocking $d$-dimensional polytope $P$, we will show that $|\flags(P)| \geq 2^d \cdot d!$ by induction on the sign. 
We will show that for $C,D \in \phist$ such that $C$ is a facet of $D$, any flag $F \in \signedflags{C}(P)$ induces a distinct flag in $\signedflags{D}(P)$. This will imply that $|\signedflags{D}(P)| \geq \dim D !$ and the result will follow by summing over all of the full-dimensional signs $D \in \phist$. We start by showing how a flag $F \in \signedflags{C}(P)$ induces a flag $G \in \signedflags{D}(P)$.

In short, every face $F_i$ of a flag $F \in \signedflags{C}(P)$ lies in a unique minimal face of $P \cap \lin D$, and furthermore, this minimal face is either of dimension $\dim F_i = i$, or of dimension $i+1$. In the former case we set $G_i$ to be this minimal face, and in the latter we choose $G_i$ to be a suitable $i$-face of this minimal face. This will give a flag $G \in \signedflags{D}(P)$, sitting "above" $F$, see \Cref{fig:raise} for examples. The precise details are given in the next lemma, which is stated for the case $\dim D = d$ for convenience.

\begin{lemma}\label{lem:one_step}
    Let $P$ be a proper locally anti-blocking polytope. Let $D \in \phist$ be full-dimensional and let $C \in \faces{d-1}(D)$ be a facet. Denote by $n$ the unique inner unit normal to $D$ at $C$. Let $F \in \signedflags{C}(P)$. Then there is a flag $G \in \signedflags{D}(P)$ such that for any $k \in [0,d-1]$,
    \begin{enumerate}[label=(\roman*)]
        \item \label{prop:lem1} $G_k \subseteq \aff F_k + \RR_{\geq 0} n$,
        \item \label{prop:lem2}  $n \not\in \lin G_k$, and
        \item \label{prop:lem3} $\supp_{\proj_{\lin C} P} \proj_{\lin C} G_k = F_k$.
    \end{enumerate}
\end{lemma}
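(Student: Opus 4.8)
Let me think about this carefully. We have $P$ a proper locally anti-blocking polytope, $D$ a full-dimensional orthant in $\phist$, $C$ a facet of $D$ (so $C$ is a codim-1 face of the orthant, lying in a coordinate hyperplane), $n$ the inward normal. A flag $F \in \signedflags{C}(P)$ is a flag of $P \cap \lin C$ with sign exactly $C$. We want to produce $G \in \signedflags{D}(P)$ — a flag of $P \cap \lin D$ with sign exactly $D$ — satisfying (i), (ii), (iii).

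So here's the setup. $\lin C$ is a coordinate hyperplane, say $\lin C = \RR^{[d-1]}$ (after reindexing), $n = \pm e_d$, and $\lin D = \RR^d$. The polytope $Q := P \cap \lin D = P$ (since $D$ is full-dimensional, $\lin D = \RR^d$), wait — actually $P \cap \lin D = P$ when $D$ is full dimensional. And $\proj_{\lin C}$ is the coordinate projection killing the $e_d$ coordinate; by the locally anti-blocking property, $\proj_{\lin C} P = P \cap \lin C$. So $F$ is a flag of $P \cap \lin C = \proj_{\lin C} P$ with sign $C$.

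Here's the key structural fact I'd want: for each $i$, $F_i$ is a face of $P \cap \lin C = \proj_{\lin C} P$ whose relative interior meets $\relint C$. Consider the "minimal face of $P$ lying above $F_i$", i.e. the minimal face $\hat F_i$ of $P$ with $\proj_{\lin C}\hat F_i \supseteq F_i$ — equivalently, since $\proj_{\lin C}$ maps faces of $P$ onto faces of $\proj_{\lin C} P = P \cap \lin C$, we can take a point $x \in \relint F_i \cap \relint C$ and lift it: since $x \in P \cap \lin C$ and we're in the orthant situation, there's a unique face of $P$ whose relative interior meets the fiber over $\relint F_i$ on the $D$-side. I'd make precise the claim that this minimal face $\hat F_i$ has $\dim \hat F_i \in \{i, i+1\}$ and that $\proj_{\lin C} \hat F_i = F_i$. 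The reason the dimension jumps by at most one: the normal cone $N_P \hat F_i$ — using \Cref{prop:face_cant_cross} with $C$ and $D$ (both contain points near the relative interior of $\hat F_i$ in the appropriate sense) — must lie in $\lin(C \cap D) = \lin C$, so $e_d \notin \lin(N_P \hat F_i)$... hmm, actually I want the dual statement. Let me reconsider: if $\relint \hat F_i$ meets $\relint D$ and $F_i = \proj_{\lin C}\hat F_i$ has $\relint F_i$ meeting $\relint C$, I can combine. The point is that $\hat F_i \cap \lin C$ is either $F_i$ itself (dimension $i$, case "aligned") or a facet of $\hat F_i$... so $\dim \hat F_i$ is $i$ or $i+1$.

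The construction of $G$: for each $i$ from $0$ to $d-1$, if $\dim \hat F_i = i$ set $G_i = \hat F_i$; if $\dim \hat F_i = i+1$, we must pick an $i$-face of $\hat F_i$. To make the $G_i$ nest into a flag, I'd build them recursively: having chosen $G_{i-1} \subseteq \hat F_{i-1}$, and knowing $\hat F_{i-1} \subseteq \hat F_i$ (this needs checking — monotonicity of $\hat F_i$ in $i$, which follows from $F_{i-1} \subseteq F_i$ and minimality), pick $G_i$ to be an $i$-face of $\hat F_i$ containing $G_{i-1}$ — possible by the diamond property / shelling since $\dim \hat F_i = i$ or $i+1$ and $\dim G_{i-1} = i-1$. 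When $\dim \hat F_i = i+1$, there's a choice among the $i$-faces of $\hat F_i$ through $G_{i-1}$; I'd pin down the choice by requiring property (i), $G_i \subseteq \aff F_i + \RR_{\geq 0} n$, i.e. $G_i$ should be the face "on the $D$ side", using \Cref{fact:face_directions} or a direct separating-hyperplane argument to see such a face exists and is unique. Then I verify (ii) $n \notin \lin G_i$ (since $G_i$ projects onto $F_i$ which spans $\lin F_i \subseteq \lin C$ of the right dimension $i = \dim G_i$, so $\proj_{\lin C}|_{\lin G_i}$ is injective, hence $n = \pm e_d \notin \lin G_i$) and (iii) $\proj_{\lin C} G_i = F_i$ (by the dimension count: $\proj_{\lin C} G_i \subseteq \proj_{\lin C}\hat F_i = F_i$, both dimension $i$, and it's a face). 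Finally I must check $G = (G_i)$ has sign exactly $D$, not a smaller cone: since each $\relint G_i$ meets $\relint D$ by construction (the lift was to the $D$-side), $\sign_\phist G \subseteq D$; and $\dim \sign_\phist G \geq d$ by \Cref{cl:sign}, forcing $\sign_\phist G = D$.

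The main obstacle I anticipate is the coherence of the recursive choice: ensuring simultaneously that (a) the lifts $\hat F_i$ are genuinely monotone in $i$, (b) at each step an $i$-face of $\hat F_i$ containing $G_{i-1}$ and lying on the correct ($D$-) side actually exists and is unique, and (c) property (i) as stated — with $\aff F_k$, not $\aff G_{k-1}$ — is strong enough to pin the choice down yet consistent with the nesting. The cleanest route is probably to set up (i)–(iii) as the induction hypothesis itself and verify the inductive step using \Cref{fact:face_directions} to witness that a point of $\relint \hat F_i$ on the $D$-side lies in $\relint(G_{i-1} \vee \{\text{that point}\})$, which forces the existence of the desired $i$-face; uniqueness then comes from the diamond property applied between $G_{i-1}$ and $\hat F_i$ together with the side constraint. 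I'd also lean on \Cref{lem:unique_proj} / \Cref{lem:unique_face} as a sanity check that "the face on one side" is well-defined, though a direct argument is likely shorter here.
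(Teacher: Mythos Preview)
Your overall architecture matches the paper's: define a lift of each $F_k$ (the paper writes $H_k := \supp_P\bigl((\aff F_k + \RR_{\geq 0} n) \cap P\bigr)$ rather than your $\hat F_k$, which makes well-definedness immediate), observe that $\dim H_k \in \{k,k+1\}$ with a threshold $k_0$ above which the dimension is always $k+1$, and build $G_k$ recursively inside $H_k$ via the diamond property. The substantive gap is your selection rule. You propose to single out $G_k$ among the two $k$-faces between $G_{k-1}$ and $H_k$ by imposing property~(i), and then to derive (ii) from (iii). This is circular --- your argument for (iii) needs $\dim \proj_{\lin C} G_k = k$, which is precisely (ii) --- and, more to the point, condition~(i) alone does not distinguish the two candidates once $k > k_0$. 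Concretely, let $P \subseteq \RR^2$ be the locally anti-blocking pentagon with vertices $(1,0),(1,1),(0,1),(-1,0),(0,-1)$, take $C = \RR_{\geq 0} e_1$, $n = e_2$, $F_0 = \{(1,0)\}$. Then $k_0 = 0$, $G_0 = \{(1,1)\}$, and at $k=1$ the two edges of $P$ through $(1,1)$ are $\{1\}\times[0,1]$ and $[0,1]\times\{1\}$, \emph{both} contained in $\aff F_1 + \RR_{\geq 0} n = \{y \geq 0\}$ and both meeting $\relint D$.

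The paper resolves this by reversing the logical order: for $k > k_0$ it \emph{defines} $G_k$ to be the unique $k$-face between $G_{k-1}$ and $H_k$ with $n \notin \lin G_k$ (the other candidate is always $H_{k-1}$, which has $n \in \lin H_{k-1}$ by construction). Property~(i) is then deduced from (ii) via \Cref{prop:face_cant_cross}: were $\relint G_k$ to meet both open half-spaces determined by $\lin C$, the normal cone $N_P G_k$ would lie in $\lin C$, forcing $n \in \lin G_k$. Property~(iii) then follows from (i) and (ii) together. At the threshold step $k = k_0$ your ``meets $\relint D$'' criterion does work (since there $G_{k_0-1} = F_{k_0-1} \subseteq \lin C$), and (ii) is checked by a short direct argument. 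Finally, \Cref{lem:unique_proj} and \Cref{lem:unique_face} play no role in this lemma; they enter only later, to show that the maps $\chi_C^D$ have disjoint images.
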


\begin{proof}
    For any $k \in [0,d-1]$ set $H_k = \supp_P ((\aff F_k + \RR_{\geq 0} n) \cap P) \in \faces(P)$, and also set $H_{-1} = \emptyset$. Note that by definition of $H_k$, for all $k \in [-1,d-1]$, $F_k \subseteq H_k \subseteq \aff F_k + \RR n$. Therefore $k = \dim F_k \leq \dim H_k \leq \dim F_k + 1 = k+1$. 
    By definition we have the inclusion $H_k \subseteq H_{k+1}$ for all $k$. Note that this inclusion is strict: if for some $k$, $H_k = H_{k+1}$ then $\dim H_k = \dim H_{k+1} = k+1$ and so $\aff H_k = \aff F_k + \RR n$ on the one hand. On the other hand, $\aff H_{k+1} = \aff F_{k+1}$, but $\aff F_{k+1} \neq \aff F_k + \RR n$ since $F_{k+1} \subseteq n^\perp$. This implies that there is some $k_0 \in [0,d-1]$ such that for any $k < k_0$, $\dim H_k = k$ and for any $k \geq k_0$, $\dim H_k = k+1$. 

    Denote $L = \lin C$. We will build the flag $G$ in steps, see \Cref{fig:raise}, while maintaining the three \cref{prop:lem1,prop:lem2,prop:lem3} from the statement of the Lemma. For any $k \in [-1,d]$, in order, we determine $G_k$ as follows. Set $G_{-1} = \emptyset$. \\

    If $k < k_0$, we have $\dim H_k = \dim F_k$, so $H_k = F_k$. In this case $F_k \in \faces(P)$. Set $G_k = F_k$. \Cref{prop:lem1,prop:lem2,prop:lem3} hold trivially since $F_k \subseteq L$, we have $G_k \supseteq G_{k-1}$ by construction, and $\dim G_k = \dim F_k = k$ (these latter two conditions are necessary for $G$ to be a flag). Additionally, since $F_k \subseteq C \subseteq D$, we have $\relint G_k \cap D \neq \emptyset$. \\

    If $k = k_0$, by construction we have $G_{k-1} \subseteq H_{k-1} \subseteq H_k$, and also $\dim H_k = k+1 = \dim G_{k-1} + 2$. By the diamond property, there are exactly two faces of $P$ of dimension $k$ between $G_{k-1}$ and $H_k$.
    Exactly one of these two faces intersects $\relint D$: indeed, if neither intersect $\relint D$ then $H_k \subseteq L + \RR_{\leq 0} n$, which implies that $H_k = F_k$ and in particular $\dim H_k = k$, in contradiction to $k = k_0$; and if both faces intersect $\relint D$ then $H_k \setminus G_{k-1} \subseteq \relint D = L + \RR_{>0} n$, which is a contradiction to the fact that $F_k \subseteq H_k \cap L$. Denote by $G_k$ the unique $k$-dimensional face of $P$ between $G_{k-1}$ and $H_k$ that intersects $\relint D$. Note that \cref{prop:lem2} holds in this case: otherwise, $n \in \lin G_k$, and since by construction $n \not\in \lin G_{k-1}$ and $G_k \supseteq G_{k-1}$, we get $\aff G_k = \aff G_{k-1} + \RR n$. Since $G_k$ intersects $L + \RR_{>0}n$, we see that $P \cap (\aff F_{k-1} + \RR_{>0}n) \neq \emptyset$ and so by definition $H_{k-1} \supsetneq F_{k-1}$, in contradiction. Since $G_k \cap \relint D \neq \emptyset$ and $D$ is full-dimensional, we also have $\relint G_k \cap \relint D \neq \emptyset$ and in particular $\relint G_k \cap D \neq \emptyset$. \Cref{prop:lem1,prop:lem3} will be verified shortly. \\

    If $k > k_0$, again by construction $G_{k-1} \subseteq H_{k-1} \subseteq H_k$ and $\dim H_k = \dim G_{k-1} + 2$. By the diamond property there are exactly two $k$-dimensional faces of $P$ between $G_{k-1}$ and $H_k$. Exactly one of these faces $S$ has the property $n \not\in \lin S$: indeed, since $n \not\in \lin G_{k-1}$, if both faces have $n$ in their linear span then they have equal affine span, which is impossible; however, consider $H_{k-1}$: we have already established $G_{k-1} \subsetneq H_{k-1} \subsetneq H_k$, and since $k-1 \geq k_0$ we have $\dim H_{k-1} = k$. Since by definition, $H_{k-1} \subseteq \aff F_{k-1} + \RR n$, we see that $\lin H_{k-1} = \lin F_{k-1} + \RR n$ and specifically $n \in \lin H_{k-1}$. Thus at least one of the faces strictly between $G_{k-1}$ and $H_k$ has $n$ in their linear span. Define $G_k$ to be the unique face strictly between $G_{k-1}$ and $H_k$ with $n \not\in \lin G_k$, i.e. \cref{prop:lem2} holds.    
    Since by construction $G_{k-1}$ itself intersects $\relint D = L + \RR_{>0} n$ and $G_{k-1} \subseteq G_k$, $G_k$ also intersects $\relint D$. By the same argument as above, $\relint G_k \cap D \neq \emptyset$. See \Cref{fig:FGH}. \\
    
    We turn to verifying \cref{prop:lem1,prop:lem3} in the cases $k \geq k_0$.
    Since we established that $H_k \subseteq \aff F_k + \RR n$ and $G_k$ is a face of $H_k$, $G_k \subseteq \aff F_k + \RR n$. Assume $G_k \not\subseteq \aff F_k + \RR_{\geq 0} n$, then $\relint G_k \cap (L + \RR_{<0}n) \neq \emptyset$, and by construction $G_k \cap (L + \RR_{>0}n) \neq \emptyset$ which also implies $\relint G_k \cap (L + \RR_{>0}n) \neq \emptyset$. By \Cref{prop:face_cant_cross}, $N_P G_k \subseteq L = n^\perp$ and so $n \in \lin G_k$, in contradiction. This proves \cref{prop:lem1}.
    \Cref{prop:lem3} quickly follows: since $G_k \subseteq \aff F_k + \RR_{\geq 0} n$, $\proj_L G_k \subseteq \aff F_k$, and so $\supp_{\proj_L P} \proj_L G_k \subseteq F_k$. Since $n \not\in \lin G_k$, $\dim \proj_L G_k = k$, and so $\dim \supp_{\proj_L P} \proj_L G_k \geq k$.  Thus $\supp_{\proj_L P} \proj_L G_k = F_k$.

    Finally, set $G_d = P$. Since $P$ is \textit{proper} locally anti-blocking, $\relint P \cap D \neq \emptyset$. We have built a flag $G$ that satisfies \cref{prop:lem1,prop:lem2,prop:lem3} and such that for all $k \in [0,d]$, $\relint G_k \cap D \neq \emptyset$. Since $D$ is full-dimensional, this implies $\sign G = D$ and we are done.
\end{proof}

\begin{figure}
    \centering
    \begin{tikzpicture}[scale=2,line join=bevel]
        \coordinate (A1) at (1,0);
        \coordinate (A2) at (-1,0);
        \coordinate (B1) at (0,1);
        \coordinate (B2) at (0,-1);
        \coordinate (C1) at (-1,1);
        \coordinate (C2) at (1,-1);
        
        \coordinate (O) at (0,0);
        \coordinate (E1) at (1.5,0);
        \coordinate (E2) at (0,1.5);
        
        \draw (A1) -- (B1) -- (C1) -- (A2) -- (B2) -- (C2) -- cycle;

        \draw[green,line width=3] (A1) -- (A2);
        
        \draw[blue,line width=3] (A1) -- (B1);

        \node at (A1) [draw, circle, minimum size=0.15cm, inner sep=0pt, path picture={
            \clip (path picture bounding box.center) circle (0.15cm);
            \fill[green] (-0.2,0) rectangle (0.2,0.2);
            \fill[blue]  (-0.2,-0.2) rectangle (0.2,0);
        }] {};
        
        \draw[->] (O) -- ++(E1);
        \draw[->] (O) -- ++(E2);
        \node[anchor=south] at (E1) {$x$};
        \node[anchor=south west] at (E2) {$y$};
        \node[anchor=north west] at (A1) {$F_0 = H_0 = G_0$};
        \node[anchor=north] at (O) {$F_1$};
        \node[anchor=south west] at ($(A1)!0.5!(B1)$) {$G_1$};
        \node at ($(A1)!0.5!(B2)$) {$H_1$};
    \end{tikzpicture}
        
    \caption{A flag $F \in \protect\signedflags{C}(P)$ (in green), and the $0,1$ faces of $\chi_C^D(F) \in \protect\signedflags{D}(P)$, in blue, where $C$ is spanned by $x$ and $D$ by $x,y$. The face $H_1$ equals $P$ itself here.}
    \label{fig:FGH}
\end{figure}
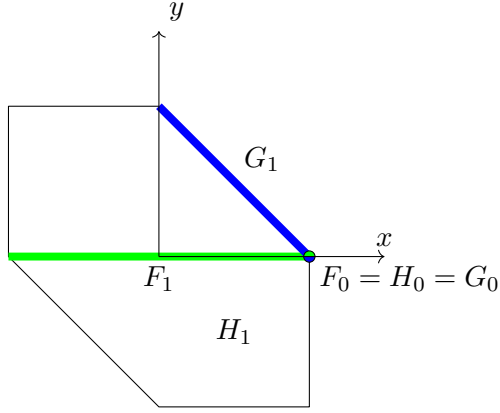

\begin{prop}\label{prop:count}
    Let $P \subseteq \RR^d$ be a proper locally anti-blocking polytope and let $D \in \phist$ be full-dimensional. There are at least $d!$ flags of $P$ with sign $D$.
\end{prop}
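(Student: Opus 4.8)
The plan is to fix the full-dimensional cone $D\in\phist$ — an orthant of $\RR^d$, so that $\lin D=\RR^d$ and $\signedflags{D}(P)$ is exactly the set of flags of $P$ whose sign equals $D$ — and to prove $\abs{\signedflags{D}(P)}\geq d!$ by induction on $d$, ranging over all proper locally anti-blocking $P\subseteq\RR^d$ and all full-dimensional $D\in\phist$ at once. The base case $d=0$ is trivial, since $P\cap\lin D=\{0\}$ has a single flag, of sign $\{0\}=D$.

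For the inductive step, recall that the orthant $D$ has exactly $d$ facets $C_1,\dots,C_d$, with $C_i=D\cap e_i^{\perp}$ a full-dimensional cone of the standard fan inside the coordinate hyperplane $\lin C_i=e_i^{\perp}$. The section $P\cap\lin C_i$ is again a proper locally anti-blocking polytope, of dimension $d-1$ in $\lin C_i\cong\RR^{d-1}$ (intersecting with a coordinate subspace preserves local anti-blocking, and the origin stays interior); moreover the sign of a flag of $P\cap\lin C_i$ is the same whether computed with the standard fan of $\RR^d$ or that of $\lin C_i$, because if a cone $E\in\phist$ meets the relative interior of every face of such a flag then so does $E\cap\lin C_i$, a cone of $\phist$ contained in $\lin C_i$. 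Hence $\signedflags{C_i}(P)=\signedflags{C_i}(P\cap\lin C_i)$, and the inductive hypothesis gives $\abs{\signedflags{C_i}(P)}\geq(d-1)!$. For each $i$, \Cref{lem:one_step} furnishes a map $\chi_{C_i}^{D}\colon\signedflags{C_i}(P)\to\signedflags{D}(P)$, $F\mapsto G$, and this map is injective: by property (iii) of \Cref{lem:one_step}, $F_k=\supp_{\proj_{\lin C_i}P}\proj_{\lin C_i}G_k$ for all $k$, so $F$ is determined by $G=\chi_{C_i}^{D}(F)$. Thus, if the images $\Ima\chi_{C_1}^{D},\dots,\Ima\chi_{C_d}^{D}$ are pairwise disjoint, then $\abs{\signedflags{D}(P)}\geq\sum_{i=1}^{d}\abs{\signedflags{C_i}(P)}\geq d\cdot(d-1)!=d!$, closing the induction.

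Everything therefore reduces to the disjointness of the images, i.e.\ to recovering the facet $C_i$ from $G=\chi_{C_i}^{D}(F)$ alone. The invariant to isolate is the coordinate direction $e_i$: property (iii) says the facewise projection $\bigl(\supp_{P\cap e_i^{\perp}}\proj_{e_i^{\perp}}G_k\bigr)_{k=0}^{d-1}$ is a flag of $P\cap e_i^{\perp}=\proj_{e_i^{\perp}}P$, which dually is a transversality condition on the nested normal cones $N_PG_0\supsetneq\cdots\supsetneq N_PG_{d-1}$ with respect to $e_i^{\perp}$ (each $N_PG_k\cap e_i^{\perp}$ keeps dimension $d-1-k$). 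I would show this transversality fails for every other coordinate hyperplane bounding $D$: were it to hold for both $e_i^{\perp}$ and $e_{i'}^{\perp}$ with $i\neq i'$, then by injectivity $G$ would be the common output of the two constructions of \Cref{lem:one_step} attached to $C_i$ and $C_{i'}$; running these in parallel, writing $k_0,k_0'$ for the levels at which each first leaves its hyperplane, and using properties (i)–(ii) of \Cref{lem:one_step} together with \Cref{prop:face_cant_cross} and the diamond property, the face forced at level $\min(k_0,k_0')$ by one construction contradicts the one forced by the other.

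I expect this last step to be the crux and to be genuinely delicate: unlike in the flip \Cref{lem:unique_proj}, projection onto a coordinate hyperplane need not be projection along an edge of $P$ (a locally anti-blocking polytope may possess no edge parallel to $e_i$), so \Cref{lem:unique_proj} cannot be applied off the shelf — one must instead lean on the local anti-blocking identity $\proj_{e_i^{\perp}}P=P\cap e_i^{\perp}$ and on \Cref{prop:face_cant_cross}.
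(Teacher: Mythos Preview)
Your inductive scheme, the use of \Cref{lem:one_step} to build the maps $\chi_{C_i}^{D}$, and the injectivity argument via property~(iii) all coincide with the paper's proof (packaged there as \Cref{lem:construction_of_D}, of which \Cref{prop:count} is an immediate corollary). The only substantive divergence is in how you establish that the images $\Ima\chi_{C_1}^{D},\dots,\Ima\chi_{C_d}^{D}$ are pairwise disjoint, and this is where your proposal has a genuine gap: you offer only a sketch (``running these in parallel \dots\ the face forced at level $\min(k_0,k_0')$ by one construction contradicts the one forced by the other''). The one concrete step extractable from it --- that $k_0<k_0'$ is impossible because $G_{k_0}$ would meet $\relint D$ (from the $C_i$-construction) while lying in $\lin C_{i'}$ (from the $C_{i'}$-construction, since $k_0<k_0'$) --- still leaves the case $k_0=k_0'$ entirely open, and nothing in your outline handles it.

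Contrary to your expectation, the paper settles disjointness precisely by invoking \Cref{lem:unique_proj}. The bridge you are missing is this: under the contradiction hypothesis $G=\chi_{C}^{D}(F)=\chi_{C'}^{D}(F')$, property~(iii) at level $k=0$ says $F_0=\proj_{\lin C}G_0$ is a \emph{vertex} of $\proj_{\lin C}P=P\cap\lin C$, so the fibre $E:=(G_0+\RR\,n_C^D)\cap P=(F_0+\RR\,n_C^D)\cap P$ of this projection over $F_0$ is a face of $P$ of dimension at most~$1$; the paper argues it is in fact an edge, with direction $n_C^D$, whence $E^\perp=\lin C$ and property~(iii) becomes exactly the hypothesis of \Cref{lem:unique_proj} for the edge $E$. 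Repeating for $C'$ yields a second edge $E'$ through $G_0$ in direction $n_{C'}^D\neq n_C^D$ with the same flag-projection property, and the uniqueness clause of \Cref{lem:unique_proj} delivers the contradiction. Your intuition that a locally anti-blocking polytope need not possess an edge parallel to $e_i$ is correct in the abstract, but the paper's point is that at \emph{this particular} vertex $G_0$, under the contradiction hypothesis, such a coordinate-direction edge is forced; your proposed detour through \Cref{prop:face_cant_cross} and the diamond property, if it could be completed, would essentially be a reproof of this edge-existence step by hand.
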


This proposition is an immediate corollary of the following lemma:

\begin{lemma}\label{lem:construction_of_D}
    Let $P \subseteq \RR^d$ be a proper locally anti-blocking polytope. There exist injections $\chi_C^D: \signedflags{C}(P) \to \signedflags{D}(P)$ for all $D \in \phist$ and a facet $C \in \faces{\dim D - 1}(D)$, such that the following properties hold.
    \begin{enumerate}[label=(\roman*)]
        \item \label{prop:1} For any $D \in \phist$ and $C \neq C' \in \faces{\dim D -1}(D)$, $\Ima \chi_C^D \cap \Ima \chi_{C'}^D = \emptyset$.
        \item \label{prop:2} For any $D \in \phist$, $|\signedflags{D}(P)| \geq \dim D!$.
    \end{enumerate}
\end{lemma}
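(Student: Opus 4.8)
The plan is to prove \Cref{lem:construction_of_D} by induction on $\dim D$, using \Cref{lem:one_step} to build the injections $\chi_C^D$ and \Cref{lem:unique_proj} to establish injectivity and disjointness of images. I will also need to reinterpret part \ref{prop:2} as a combinatorial consequence of \ref{prop:1}: a full-dimensional cone $D \in \phist$ has exactly $\dim D$ facets $C$ (since it is an orthant, spanned by $\dim D$ of the $\pm e_i$, and removing any one generator gives a facet), so if the images of the $\chi_C^D$ are disjoint, then $|\signedflags{D}(P)| \ge \sum_{C \in \faces{\dim D - 1}(D)} |\signedflags{C}(P)| \ge \dim D \cdot (\dim D - 1)! = \dim D!$ by the inductive hypothesis applied to each facet $C$ (which is a cone of dimension $\dim D - 1$). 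The base case $\dim D = 0$ is trivial since $\signedflags{D}(P)$ for the zero cone is a single flag (the flag of the point $P \cap \{0\}$, assuming $0 \in \inter P$), matching $0! = 1$.

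For the construction: given $D \in \phist$ of dimension $m$ and a facet $C$, let $n$ be the unit normal to $C$ inside $\lin D$ pointing towards $D$, as in \Cref{lem:one_step}. Although \Cref{lem:one_step} is stated for $\dim D = d$, the same proof applies verbatim inside the ambient space $\lin D$ (replacing $P$ by $P \cap \lin D$, which is again a proper locally anti-blocking polytope in $\lin D$ — here ``proper'' uses that $0 \in \inter P$, hence $0 \in \relint(P \cap \lin D)$), so it produces from each $F \in \signedflags{C}(P)$ a flag $G = \chi_C^D F \in \signedflags{D}(P)$ satisfying properties \ref{prop:lem1}--\ref{prop:lem3}. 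I would remark on this reduction explicitly at the start of the proof to justify invoking \Cref{lem:one_step} at all dimensions.

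The main work is injectivity of $\chi_C^D$ and disjointness of images for distinct facets $C, C'$ of $D$. This is where \Cref{lem:unique_proj} enters, and it is the part I expect to be the main obstacle. The idea: given a flag $G \in \signedflags{D}(P)$ in the image of some $\chi_C^D$, I want to recover both the facet $C$ and the original flag $F$ from $G$ alone. By property \ref{prop:lem3}, $F_k = \supp_{\proj_{\lin C} P} \proj_{\lin C} G_k$, so $F$ is determined by $G$ once $C$ is known; thus it suffices to show $C$ is determined by $G$. Working inside $\lin D$, the facet $C$ corresponds to the direction $n$, and properties \ref{prop:lem1}--\ref{prop:lem2} say precisely that $G_k \subseteq \aff F_k + \RR_{\ge 0} n$ with $n \notin \lin G_k$, i.e. projecting the flag $(G_k)_{k=0}^{m-1}$ along the edge direction determined near $G_0$ yields the flag $(F_k)$ of a facet-section. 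By the uniqueness clause of \Cref{lem:unique_proj} applied to $P \cap \lin D$ and the flag $G$ (extended appropriately, with $G_m = P \cap \lin D$), there is a \emph{unique} edge $E \ni G_0$ of $P \cap \lin D$ along which projecting $G$ gives a flag; one then argues that for $G = \chi_C^D F$ this edge $E$ must be an edge of $P \cap \lin D$ pointing in the $+n$ direction out of $C$, and moreover $n^\perp \cap \lin D = \lin C$ is forced, so that $C$ — the unique facet of $D$ with $\lin C \perp n$ within $\lin D$ — is recovered. Hence $\chi_C^D$ is injective and, since different facets $C \ne C'$ give different normals $n \ne n'$ hence different recovered data, the images are disjoint, giving \ref{prop:1}; then \ref{prop:2} follows by the counting argument above. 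The delicate point to get right is matching the ``project along an edge'' formulation of \Cref{lem:unique_proj} with the ``contained in $\aff F_k + \RR_{\ge 0}n$'' formulation coming out of \Cref{lem:one_step}, and verifying that the edge $E$ produced by \Cref{lem:unique_proj} genuinely encodes $n$ (equivalently $C$) rather than something weaker.
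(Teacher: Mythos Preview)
Your proposal is correct and follows essentially the same route as the paper: induct on $\dim D$, build $\chi_C^D$ via \Cref{lem:one_step} applied inside $\lin D$, and use the uniqueness in \Cref{lem:unique_proj} to show the images are disjoint, after which \ref{prop:2} is just the counting argument you describe. The one minor deviation is that the paper proves injectivity of each $\chi_C^D$ separately via a dimension argument using properties \ref{prop:lem1} and \ref{prop:lem2} (if $F_k\neq F_k'$ then $G_k$ lies in $\aff(F_k\cap F_k')+\RR_{\ge 0}n$ with $n\notin\lin G_k$, forcing $\dim G_k<k$), whereas you extract injectivity directly from property \ref{prop:lem3}; your route is arguably cleaner. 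For the ``delicate point'' you flag, the paper's resolution is exactly what you anticipate: from \ref{prop:lem3} at $k=0$ one sees that $(G_0+\RR n_C^D)\cap(P\cap\lin D)$ is an edge, and from \ref{prop:lem3} at all $k$ that projecting $G$ along this edge yields the flag $F$, so the edge singled out by \Cref{lem:unique_proj} is this one and hence determines $C$.
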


\begin{proof}
    The proof is by induction on the sign $D$. For the minimal sign $\{0\} \in \phist$ there is nothing to prove.

    Let $D \in \phist$ and set $n = \dim D$. Assume by induction that for any $C \in \faces{n-1}(D)$, $|\signedflags{C}(P)| \geq (n - 1)!$.
    For any $C \in \faces{n-1}(D)$ construct the map $\chi_{C}^D: \signedflags{C}(P) \to \signedflags{D}(P)$ that maps $F \in \signedflags{C}(P)$ to the $G \in \signedflags{D}(P)$ given by \Cref{lem:one_step}, when applied to $P \cap \lin D$, in which $D$ is full-dimensional.
    The lemma also guarantees that for any $k \in [0,n-1]$, $G_k \subseteq \aff F_k + \RR_{\geq 0} n_C^D$, where $n_C^D$ is the unique unit normal to $\lin C$ such that $D \subseteq C + \RR_{\geq 0} n_C^D$; that $n_C^D \not\in \lin G_k$, and that $\supp_{\proj_{\lin C} P} \proj_{\lin C} G_k = F_k$. 

    We first claim that the maps $\chi_C^D$ are injections. Indeed, assume towards contradiction that $F \neq F'$ are distinct flags in $\signedflags{C}(P)$ for some $C \in \faces{n-1}(D)$, and that $\chi_C^D F = \chi_C^D F' = G$. There is some $k \in [0,n-1]$ such that $F_k \neq F_k'$. Since
    \[
        G_k \subseteq (\aff F_k + \RR_{\geq 0} n_C^D) \cap (\aff F_k' + \RR_{\geq 0} n_C^D) = \aff (F_k \cap F_k') + \RR_{\geq 0} n_C^D
    \]
    and $n_C^D \not\in \lin G_k$, we find that $G_k$ is a subset of $\aff (F_k \cap F_k') + \RR_{\geq 0} n_C^D$ of codimension at least 1. But $\dim \aff (F_k \cap F_k') < k$, which implies that $\dim G_k < k$, in contradiction. 

    We turn to showing \cref{prop:1,prop:2}. For the remainder of this proof, consider $P \cap D$ as a full-dimensional polytope in $\lin D$, and specifically, whenever we take the orthogonal complement $v^\perp$ of a vector $v \in \lin D$, we mean the orthogonal complement in $\lin D$.
    In order to show \cref{prop:1}, assume towards contradiction that for $C \neq C' \in \faces{n-1}(D)$, there are $F \in \signedflags{C}(P)$ and $F' \in \signedflags{C'}(P)$ such that $\chi_C^D(F) = \chi_{C'}^D(F') = G \in \signedflags{D}(P)$. Since for any $k \in [0,n-1]$,
    \begin{equation}\label{eq:2}
        \supp_{\proj_{\lin C} P} \proj_{\lin C} G_k = F_k \qquad{and}\qquad \supp_{\proj_{\lin C'} P} \proj_{\lin C'} G_k = F_k' ,
    \end{equation}
    specifically for $k=0$, we see that $\supp_{\proj_{\lin C} P} \proj_{\lin C} G_0 = F_0$ is a vertex of $\proj_{\lin C} P$, which implies that $N_{\proj_{\lin C} P} F_0 \cap \lin C$ is $(n-1)$-dimensional and thus $E := (G_0 + \RR n_C^D) \cap P = (F_0 + \RR n_C^D) \cap P$ is an edge of $P \cap \lin D$ containing $G_0$. Similarly, $E' := (G_0 + \RR n_{C'}^D) \cap P$ is an edge of $P \cap \lin D$ containing $G_0$.
    Using \eqref{eq:2} again, we see that $(\supp_{\proj_{E^\perp} (P \cap \lin D)} \proj_{E^\perp} G_k)_{k=0}^{n-1} = (F_k)_{k=0}^{n-1}$ is a flag of $\proj_{E^\perp} (P \cap \lin D)$, and similarly $(\supp_{\proj_{(E')^\perp} (P \cap \lin D)} \proj_{(E')^\perp} G_k)_{k=0}^{n-1} = (F_k')_{k=0}^{n-1}$ is a flag of $\proj_{(E')^\perp} (P \cap \lin D)$. Since $E \neq E'$, we find two distinct edges of $P \cap \lin D$ that satisfy the condition of \Cref{lem:unique_proj}, in contradiction.

    Finally, for \cref{prop:2}, since by induction $|\signedflags{C}(P)| \geq (n - 1)!$ for all $C \in \faces{n-1}(D)$, and since by \cref{prop:1}, $\signedflags{D}(P)$ contains the disjoint union $\bigsqcup_{C \in \faces{n-1}(D)} \chi_C^D \parens*{\signedflags{C}(P)}$, we have
    \[
        |\signedflags{D}(P)| \geq \sum_{C \in \faces{n-1}(D)} |\signedflags{C}(P)| = n \cdot (n - 1)! = n ! = \dim D ! .
    \]
\end{proof}

The inequality part of \Cref{thm:main_equiv} is a simple corollary of \Cref{prop:count} by summing over all the full-dimensional signs $D \in (\phist)_d$. 

\begin{remark}
    The inequality part still holds for general fans $\Phi$. More precisely: under the condition that for any $D \in \Phi$ and $C$ a facet of $D$ with inner normal $n$, $C + \RR_{\geq 0} n \subseteq D$, and that $P$ is $\Phi$-symmetric (as defined in \Cref{rem:generalization}), the same proof holds verbatim and shows that $|\flags(P)| \geq |\flags(\Phi)|$, i.e. $P$ has at least as many flags as the fan $\Phi$. 
\end{remark}

\section{Equality case}\label{sec:equality}

We follow the strategy of Sanyal and Winter in \cite{sanyal2023kalai}. Namely, if $P$ is a minimizer for the number of flags, we will show that for any coordinate subspace $H$, $P \cap H$ is also a minimizer. This allows us to define the graph $\graph{P}$ of $P$ which encodes some of the combinatorial information of $P$, which was recalled in \Cref{sec:properties}. We will see that since all $P \cap H$, and specifically those with $\dim H = 4$, will be shown to be minimizers, the graph $\graph{P}$ is a co-graph, and therefore $P$ is a Hanner polytope.

\begin{prop}\label{prop:min_downwards}
    Let $P$ be a normalized locally anti-blocking $d$-dimensional polytope and assume $|\flags(P)| = 2^d \cdot d!$. Then for any coordinate subspace $H$ of $\RR^d$, $|\flags(P \cap H)| = 2^{\dim H} \cdot \dim H !$.
\end{prop}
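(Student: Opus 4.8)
The plan is to build on the sign-refined counting of \Cref{sec:inequality}. First I would upgrade the equality hypothesis into an exact count \emph{for every sign}, namely $|\signedflags{C}(P)| = \dim C!$ for all $C \in \phist$; then I would identify $\flags(P\cap H)$ with the disjoint union of the $\signedflags{C}(P)$ over those cones $C \in \phist$ with $\lin C = H$, and simply add up.

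\textbf{Step 1: exact counts by sign.} Since $P$ is normalized it is proper, hence full-dimensional, so by \Cref{cl:sign} every $F \in \flags(P)$ has $\dim \sign F \geq d$; thus $\sign F$ is one of the $2^d$ orthants of $\RR^d$ and $\flags(P) = \bigsqcup_{D \in (\phist)_d} \signedflags{D}(P)$. By \Cref{prop:count} each summand has size $\geq d!$, so $|\flags(P)| = 2^d d!$ forces $|\signedflags{D}(P)| = d!$ for every full-dimensional $D$. I then proceed by downward induction on $m = \dim C$: assuming $|\signedflags{D}(P)| = m!$ for every $D$ with $\dim D = m$, take any $C \in \phist$ with $\dim C = m-1$ and enlarge its set of generating signed unit vectors by one more $\pm e_i$ to obtain a cone $D \in \phist$ of dimension $m$ having $C$ as a facet. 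Since the maps $\chi_{C'}^D$ of \Cref{lem:construction_of_D} are injective with pairwise disjoint images, since that lemma also gives $|\signedflags{C'}(P)| \geq (m-1)!$ for each facet $C'$ of $D$, and since an $m$-dimensional cone of $\phist$ has exactly $m$ facets,
\[
    m! = |\signedflags{D}(P)| \;\geq\; \sum_{C' \in \faces{m-1}(D)} |\signedflags{C'}(P)| \;\geq\; m\cdot(m-1)! = m! ,
\]
so all inequalities are equalities and in particular $|\signedflags{C}(P)| = (m-1)!$. This establishes $|\signedflags{C}(P)| = \dim C!$ for all $C \in \phist$.

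\textbf{Step 2: sections see exactly the flags whose sign lies in $H$.} Let $H$ be a coordinate subspace with $k = \dim H$. As $P$ is proper, $0 \in \inter P$, so $P\cap H$ is full-dimensional in $H$, i.e.\ $\dim(P\cap H) = k$. I claim every $F \in \flags(P\cap H)$ has $\sign F \subseteq H$ and $\lin \sign F = H$. Indeed, if $E \in \phist$ meets $\relint F_i$ for all $i$, then $E\cap H$ is the face of $E$ obtained by setting to $0$ the coordinates active in $E$ but not indexed by $H$; hence $E\cap H$ is again a cone of $\phist$, and since each $\relint F_i \subseteq H$, any witness point in $E \cap \relint F_i$ also lies in $E\cap H$, so $E\cap H$ still meets every $\relint F_i$. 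Intersecting over all such $E$ gives $\sign F \subseteq H$, hence $\dim\sign F \leq k$; together with $\dim\sign F \geq \dim(P\cap H) = k$ from \Cref{cl:sign} this forces $\sign F$ to be a $k$-dimensional cone of $\phist$ contained in $H$, i.e.\ $\lin\sign F = H$. Conversely, for any $C\in\phist$ with $\lin C = H$ the set $\signedflags{C}(P)$ consists by \Cref{def:sign} of flags of $P\cap\lin C = P\cap H$, so it lies in $\flags(P\cap H)$. As distinct flags have distinct signs, $\flags(P\cap H) = \bigsqcup_{C \in \phist,\, \lin C = H}\signedflags{C}(P)$, and since there are exactly $2^k$ cones $C\in\phist$ with $\lin C = H$, Step 1 yields $|\flags(P\cap H)| = 2^k\cdot k! = 2^{\dim H}\cdot\dim H!$.

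\textbf{Expected main obstacle.} The delicate point is the claim in Step 2 that the sign of a flag of $P\cap H$ cannot escape $H$; this rests on the elementary but essential observation that intersecting a cone of the standard fan with a coordinate subspace yields a \emph{face} of that cone — again a cone of $\phist$ — which still meets the relative interiors of all the $F_i$. Everything else is bookkeeping on top of \Cref{prop:count}, \Cref{lem:construction_of_D} and \Cref{cl:sign}, and no further geometric facts about locally anti-blocking polytopes are needed beyond those already used in the inequality part.
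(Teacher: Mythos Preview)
Your proof is correct and takes a genuinely different route from the paper's. The paper argues by contradiction: assuming some section $P\cap H$ fails to minimize, it locates a ``critical pair'' $H_0\subset H_1$ of adjacent coordinate subspaces where minimality breaks, uses pigeonhole to find a single sign $D_0\subset H_0$ with $|\signedflags{D_0}(P)|>(n-1)!$, and then pushes this excess up via the injections $\chi_{C}^{D_1}$ to contradict minimality of $P\cap H_1$. You instead run a direct argument: first a downward induction on $\dim C$ squeezes the inequalities in \Cref{lem:construction_of_D} to obtain the stronger fact $|\signedflags{C}(P)|=\dim C!$ for \emph{every} $C\in\phist$; then you partition $\flags(P\cap H)$ by sign, using the elementary observation that $E\cap H$ is again a cone of $\phist$ meeting all $\relint F_i$ whenever $E$ does, so the sign of a flag of $P\cap H$ is forced to be a full-dimensional cone inside $H$. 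Your approach buys a clean, contradiction-free proof and an independently useful intermediate statement (exact sign counts); the paper's approach avoids having to prove that strengthened statement and works more locally. Both rest on exactly the same machinery from \Cref{sec:inequality}. One cosmetic slip: ``distinct flags have distinct signs'' should read ``a flag has a unique sign, so the sets $\signedflags{C}(P)$ are pairwise disjoint''.
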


\begin{proof}
    The proof uses \Cref{lem:construction_of_D} to show that if a certain coordinate section does not minimize flags, then any coordinate intersection sitting "above" it must also not minimize, and specifically the full polytope doesn't.
    
    Assume there exists some coordinate subspace $H$ with $|\flags(P \cap H)| > 2^{\dim H} \cdot \dim H!$. Since $|\flags(P \cap \RR^d)| = |\flags(P)| = 2^d \cdot d!$ by hypothesis, there exists two coordinate subspaces $H_0 \subseteq H_1$ with $n := \dim H_1 = \dim H_0 + 1$ such that $|\flags(P \cap H_0)| > 2^{n-1} \cdot (n-1)!$ and $|\flags(P \cap H_1)| = 2^n \cdot n!$. 
    Therefore by the pigeonhole principle there is some $D_0 \in \phist$ with $\lin D_0 = H_0$ such that $\left| \signedflags{D_0}(P \cap H_0) \right| > (n-1)!$. Set some $D_1 \in \phist$ such that $\lin D_1 = H_1$ and $D_0 \in \faces{n-1}(D_1)$. 
    By \Cref{lem:construction_of_D} we get 
    \begin{align*}
        \left| \flags(P \cap H_1) \right| =& \sum_{\substack{D \in \phist \\ \lin D = H_1}} |\signedflags{D}(P \cap H_1)| = |\signedflags{D_1}(P \cap H_1)| + \sum_{\substack{D_1 \neq D \in \phist \\ \lin D = H_1}} |\signedflags{D}(P \cap H_1)| \\
        \geq& |\signedflags{D_1}(P \cap H_1)| + \sum_{\substack{D_1 \neq D \in \phist \\ \lin D = H_1}} \dim D ! \\
        \geq& \left| \Ima \chi_{D_0}^{D_1} \right| + \sum_{D_0 \neq C \in \faces{n-1}(D_1)} \left| \Ima \chi_C^{D_1} \right| + (2^n-1) \cdot n! \\
        =& |\signedflags{D_0}(P \cap H_0)| + \sum_{D_0 \neq C \in \faces{n-1}(D_1)} \left| \signedflags{C}(P \cap H_0) \right| + (2^n - 1) \cdot n! \\
        >& (n-1)! + (n-1) \cdot (n-1)! + (2^n - 1) \cdot n! = 2^n \cdot n! , 
    \end{align*}
    in contradiction to the choice of $H_1$.
\end{proof}

This implies that any 2-dimensional coordinate section must be either $\square^2$ or $\lozenge^2$.

\begin{claim}\label{cl:have_graph}
    Let $P$ be a normalized locally anti-blocking $d$-dimensional polytope with $|\flags(P)| = 2^d \cdot d!$. Then for any coordinate subspace $H$ of $\RR^d$ with $\dim H = 2$, either $P \cap H = \square^d \cap H$, or $P \cap H = \lozenge^d \cap H$.
\end{claim}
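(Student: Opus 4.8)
The plan is to apply \Cref{prop:min_downwards} to get $|\flags(P \cap H)| = 2^2 \cdot 2! = 8$, and then classify the $2$-dimensional polytopes with exactly eight flags that are squeezed between $\lozenge^2$ and $\square^2$. Write $Q := P \cap H$ and work inside $H$, identified with $\RR^2$ via the two standard basis vectors spanning it; then $\lozenge^d \cap H = \lozenge^2$ and $\square^d \cap H = \square^2$, and since $P$ is normalized locally anti-blocking we have $\lozenge^d \subseteq P \subseteq \square^d$, hence $\lozenge^2 \subseteq Q \subseteq \square^2$. In particular $Q$ is $2$-dimensional, the four points $\pm e_1, \pm e_2$ all lie in $Q$, each lies on $\partial\square^2$ and hence on $\partial Q$, and $h_Q(\pm e_i) = 1$. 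A $2$-dimensional polytope with $V$ vertices has $V$ edges, hence $2V$ flags (a flag being an edge together with one of its two endpoints), so the hypothesis forces $Q$ to have exactly four vertices; it remains to show $Q \in \{\lozenge^2, \square^2\}$.

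The structural observation driving the argument is: \emph{if $\varepsilon e_i$ (with $\varepsilon \in \{\pm 1\}$, $i \in \{1,2\}$) is not a vertex of $Q$, then it lies in the relative interior of an edge $E$ of $Q$ with $E \subseteq \{x_i = \varepsilon\}$, and both endpoints of $E$ are vertices of $Q$ on the line $\{x_i = \varepsilon\}$.} Indeed, since $h_Q(\varepsilon e_i) = 1$ we have $\varepsilon e_i \in Q(\varepsilon e_i) = \setdef{x \in Q}{x_i = \varepsilon}$, so the minimal face $E := \supp_Q(\varepsilon e_i)$ satisfies $E \subseteq Q(\varepsilon e_i)$; as $\varepsilon e_i$ lies on $\partial Q$ and is not a vertex, $E$ is an edge, and its two endpoints are vertices of $Q$ lying on $\{x_i = \varepsilon\}$.

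I would then finish by a case analysis on the number $k$ of the four points $\pm e_1, \pm e_2$ that are vertices of $Q$. If $k = 4$, these are all four vertices of $Q$, so $Q = \bigvee\{\pm e_1, \pm e_2\} = \lozenge^2$. If $k = 0$, the four points lie in the relative interiors of four edges of $Q$, one on each side of $\square^2$; since $Q$ has only four edges these are all of them, two edges on the parallel lines $\{x_1 = 1\}$ and $\{x_1 = -1\}$ cannot be adjacent (their common vertex would satisfy $x_1 = 1$ and $x_1 = -1$), so consecutive edges lie on perpendicular sides and meet at the corners $(\pm1,\pm1)$, giving $Q = \square^2$. For $1 \le k \le 3$ I would derive a contradiction: the $4-k \ge 1$ non-vertex points among $\pm e_1, \pm e_2$ force that many distinct coordinate-parallel edges of $Q$, none of whose endpoints can be one of the remaining points $(\pm1,0)$ or $(0,\pm1)$ (these have the wrong coordinate to lie on the relevant side of $\square^2$); counting, together with the constraints that $Q$ has only four edges and four vertices and that parallel forced edges must be opposite in the edge-cycle, then yields either more than four vertices of $Q$, or all four vertices lying on $\{x_1 = \pm 1\}$ while some $(0,\pm1)$ is also a vertex — both impossible.

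I expect the range $1 \le k \le 3$ to be the only real work: one must keep track of which of the forced coordinate edges can share a corner of $\square^2$ and thread them around the four-edge cycle of $Q$. Everything else — the reduction to a quadrilateral squeezed between $\lozenge^2$ and $\square^2$, and the observation that a non-vertex among $\pm e_1,\pm e_2$ forces a full coordinate-parallel edge — is short, and notably local anti-blocking of the section $Q$ is not needed beyond the inclusions $\lozenge^2 \subseteq Q \subseteq \square^2$.
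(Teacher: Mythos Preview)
Your reduction to a quadrilateral via \Cref{prop:min_downwards} and the flag count $2V$ is exactly what the paper does. The divergence is in the final step: the paper simply says that a \emph{normalized locally anti-blocking} quadrilateral must be $\square^2$ or $\lozenge^2$, invoking the locally anti-blocking structure of the section directly (each orthant-piece of $Q$ is anti-blocking, which forces the shape). You instead discard the locally anti-blocking hypothesis on $Q$ and work only from the sandwich $\lozenge^2 \subseteq Q \subseteq \square^2$, running a case analysis on how many of the four points $\pm e_1,\pm e_2$ are vertices.

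Your approach is correct and the case analysis goes through: for $k=3$ the single forced coordinate edge contributes two vertices disjoint from the three unit vectors, giving five; for $k=2$ adjacent (e.g.\ $e_1,e_2$ vertices) the two forced edges on $\{x_1=-1\}$ and $\{x_2=-1\}$ share at most the corner $(-1,-1)$, giving at least three new vertices plus the two unit vectors; for $k=2$ opposite the two forced edges are on parallel lines and share nothing; for $k=1$ the three forced edges share at most two corners. The $k=0$ and $k=4$ cases are as you wrote. So the sketch is sound, and what you call ``the only real work'' is indeed routine. The trade-off is clear: the paper's line is shorter but leans on the inherited LAB structure of the section, while your argument is slightly longer but shows that normalization alone (the inclusions $\lozenge^2 \subseteq Q \subseteq \square^2$) already pins down a quadrilateral, which is a mildly stronger observation.
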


\begin{proof}
    Let $H$ be a 2-dimensional coordinate subspace. By \Cref{prop:min_downwards} we have $|\flags(P \cap H)| = 8$. Note that any 2-dimensional $n$-gon has exactly $n$ vertices, each contained in exactly 2 edges, and therefore has exactly $2n$ flags. Thus $P \cap H$ is a quadrilateral. Since it is also locally anti-blocking it must be either $\square^d \cap H$ or $\lozenge^d \cap H$. 
\end{proof}

By \Cref{cl:have_graph}, for any $P$ which minimizes the flag number we may define the graph $\graph{P}$ as in \Cref{sec:preq}.
Our goal now is to present an analogue of \cite[Lemma 19]{sanyal2023kalai}; that is, to show that if $P$ minimizes flags, then it can be recovered from $\graph{P}$. 

Recall the definition of $\phist$. Any cone $C \in \phist$ can be uniquely presented as the positive span of some subset $J_C \subseteq \{\pm e_i\}_{i \in [d]}$. Denote by $1_C \in C$ the vector $1_C = \sum_{v \in J_C} v$, i.e. for full-dimensional $C \in \phist$ the vectors $1_C$ are exactly the vertices of the unit cube $\square^d$, and for general $C \in \phist$ the vectors $1_C$ are always barycenters of some face of $\square^d$.

We prove the following proposition, which says that the condition $1_D \in P$, for $D \in \phist$, is only deteremined by the condition $\forall C \in \faces{2}(D): 1_C \in P$. 

\begin{prop}\label{prop:clique_graph_condition}
    Let $P$ be a normalized locally anti-blocking polytope in $\RR^d$ such that $|\flags(P)| = 2^d \cdot d!$ and let $D \in \phist$ of dimension $\geq 2$. Assume that for any $C \in \faces{2}(D)$, $1_C \in P$. Then $1_D \in P$. 
\end{prop}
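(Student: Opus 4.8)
The plan is to induct on $m := \dim D$, after normalizing. A signed permutation of the coordinates is a symmetry of $\phist$ that preserves the classes of normalized locally anti-blocking polytopes and of flag-minimizers and sends each $1_C$ to the barycenter of the image cone, so I may assume $D = \RR^{[m]}_{\geq 0}$ is the non-negative orthant of $\RR^{[m]}$. Then $1_D = e_1 + \dots + e_m$, the $2$-faces of $D$ are the $\RR_{\geq 0}e_i + \RR_{\geq 0}e_j$ with barycenters $e_i + e_j$, and the hypothesis becomes: $e_i + e_j \in P$ for all distinct $i,j \in [m]$. For $m = 2$ we have $\faces{2}(D) = \{D\}$, so the conclusion is the hypothesis and there is nothing to prove; assume $m \geq 3$ and the proposition for all lower-dimensional cones.

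Set $Q := P \cap \RR^{[m]}$, a normalized locally anti-blocking $m$-dimensional flag-minimizer by \Cref{prop:min_downwards}. Since $\flags(Q) = \bigsqcup_{D'} \signedflags{D'}(Q)$ over the $2^m$ orthants $D'$ of $\RR^{[m]}$ and each summand has $\geq m!$ elements by \Cref{prop:count}, equality forces $|\signedflags{D'}(Q)| = m!$ for \emph{every} orthant $D'$; iterating over coordinate subspaces, $|\signedflags{C}(Q)| = \dim C!$ for all cones $C \in \phist$ inside $\RR^{[m]}$. By \Cref{cl:have_graph} the hypothesis says $\graph{Q} = K_m$. For each $k \in [m]$, the section $Q_k := Q \cap \RR^{[m]\setminus\{k\}}$ is a normalized locally anti-blocking minimizer (\Cref{prop:min_downwards}) with $\graph{Q_k} = K_{m-1}$ (\Cref{prop:graph_calculation}); since completeness of $\graph{Q_k}$ forces every $Q_k \cap \RR^{\{i,j\}}$ to be a normalized axis-aligned square and hence $1_C \in Q_k$ for every $2$-face $C$ of every orthant of $\RR^{[m]\setminus\{k\}}$, the inductive hypothesis in dimension $m-1$ gives $1_{D'} \in Q_k$ for all orthants $D'$; thus $Q_k$ contains all vertices of $\square^{m-1}$ and, being contained in $\square^{m-1}$, equals it. Since $Q$ is locally anti-blocking, $\proj_{e_k^\perp} Q = Q_k = \square^{m-1}$ for each $k$, so $Q \subseteq \{x : |x_i| \leq 1 \ \forall i \neq k\}$, whence $Q \subseteq \square^m$, while $Q \supseteq \bigcup_k Q_k$ gives $Q \supseteq \bigvee\bigl(\bigcup_k Q_k\bigr) = \{x \in \square^m : \|x\|_1 \leq m-1\}$. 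So $Q$ is $\square^m$ with a (possibly empty) corner cap deleted at each of its vertices, and the proposition asserts exactly that the cap at $1_D$ is empty.

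Suppose instead $1_D \notin Q$; I will contradict $|\signedflags{D}(Q)| = m!$ by exhibiting more than $m!$ flags of sign $D$. Let $G := Q(1_D)$, with support value $c := h_Q(1_D) \in [m-1, m)$ (lower bound from $1_D - e_k \in Q_k \subseteq Q$, upper from $Q \subseteq \square^m$ and $1_D \notin Q$). Every point of $G$ has $\sum_i x_i = c$ and $x_i \leq 1$, so $\sum_i(1-x_i) = m-c \leq 1$; hence every point of $G$ that is not a vertex of $\square^m$ has all coordinates positive, so $\relint F \subseteq \inter D$ for every face $F$ of $G$ of dimension $\geq 1$ (when $c > m-1$ even for $F = G$ itself; when $c = m-1$ because the vertices of $\square^m$ lying on $G$ are precisely the $1_D - e_k$, which are vertices of $G$). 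As the minimal cone of $\phist$ through an interior point of $D$ is $D$, any flag $F$ of $Q$ all of whose faces up to some $F_\ell = G$ are faces of $G$ satisfies $\sign F = D$. When $c = m-1$ one checks, using $Q \subseteq \square^m$, that $G = \bigvee_k(1_D - e_k)$ is an $(m-1)$-simplex and hence a facet of $Q$, which already yields $m!$ flags of sign $D$ (one per flag of $G$); and at least one further flag of sign $D$ has for its top face a coordinate facet $\{x_j = 1\} \cap Q \neq G$, whose relative interior meets $\inter D$. Thus $|\signedflags{D}(Q)| \geq m! + 1$, a contradiction.

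The main obstacle is making this last step uniform. When $m-1 < c < m$ the face $G = Q(1_D)$ may have dimension $\ell < m-1$, so the flags through $G$ only number $|\flags(G)| \cdot |\flags(Q/G)|$, which on its own can be smaller than $m!$; one must then also count the flags of sign $D$ running through the coordinate facets $\{x_j = 1\} \cap Q$ and show that the total strictly exceeds $m!$. That local flag count is the technical heart of the argument; the remainder is bookkeeping parallel to \cite[Lemma~19]{sanyal2023kalai}.
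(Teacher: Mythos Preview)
Your structural reduction is correct and illuminating: passing to $Q = P \cap \RR^{[m]}$, showing each hyperplane section $Q_k = \square^{m-1}$ via induction, and concluding that $Q$ is a unit cube with possibly-truncated corners is all sound. But the proof is not finished. You explicitly concede that when $m-1 < c < m$ (equivalently, when the cut at $1_D$ produces a face $G$ of dimension $< m-1$) you have not carried out the flag count, and you call this ``the technical heart of the argument.'' That is an honest assessment, but it means the proposition is not proved: the dimension of $G = Q(1_D)$ can be anything from $0$ to $m-1$, the product $|\flags(G)| \cdot |\flags(Q/G)|$ has no uniform lower bound near $m!$, and accounting for the remaining flags of sign $D$ through the coordinate facets is a genuine combinatorial problem, not bookkeeping. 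Even in the case $c = m-1$ that you do sketch, producing the one extra flag of sign \emph{exactly} $D$ through a facet $Q(e_j)$ requires more care than ``one checks.''

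The paper sidesteps counting entirely. Minimality forces equality throughout \Cref{lem:construction_of_D}, so $\signedflags{D}(P) = \bigsqcup_{C \in \faces{k-1}(D)} \Ima \chi_C^D$; hence it suffices to exhibit a \emph{single} flag $F$ of sign $D$ lying in no $\Ima \chi_C^D$. Pick any $v \in J_D$, set $C_0 = D \cap v^\perp$, and take $F$ with $F_{k-1} = (P \cap \lin D)(v)$; this is a genuine facet because the inductive hypothesis puts each $1_C$ (for $C \neq C_0$ a facet of $D$) inside it. For every facet $C \neq C_0$ the normal $n_C^D$ lies in $\lin F_{k-1} = v^\perp \cap \lin D$, so property~\ref{prop:lem2} of \Cref{lem:one_step} excludes $F$ from $\Ima \chi_C^D$. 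For $C = C_0$, property~\ref{prop:lem3} would force $\proj_{\lin C_0} F_0 = F_0 - v$ to be a vertex of $P \cap \lin C_0$; but your own reduction gives $P \cap \lin C_0 = \square^{k-1}$, whose only vertex in $C_0$ is $1_{C_0}$, while $F_0 \neq 1_D$ forces $F_0 - v \neq 1_{C_0}$. This one-flag contradiction replaces the entire flag enumeration you were attempting.
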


\begin{proof}
    The proof is by induction on $k := \dim D$. For $k = 2$ there is nothing to prove.
    
    Let $k \geq 3$ and assume $1_D \not\in P$ towards contradiction. By the induction hypothesis, for any facet $C \in \faces{k-1}(D)$, $1_C \in P$. Pick some $v \in J_D$ (i.e. $v \in D$ is a standard basis vector, up to sign) and denote $C_0 = D \cap v^\perp \in \faces{k-1}(D)$. Note that for any $C_0 \neq C \in \faces{k-1}(D)$, $1_C \in P$; also $v \in P$ as previously stated. Thus, since $P$ is normalized, $P(v)$ is $(k-1)$-dimensional. 
    Let $F \in \signedflags{D}(P)$ be a flag with $F_{k-1} = P(v)$. For any $v \neq u \in J_D$, $u \in \lin P(v)$, and so for any $C_0 \neq C \in \faces{k-1}(D)$, $F \not\in \Ima \chi_C^D$ by \cref{prop:lem2} of \Cref{lem:one_step}. However, $F_0 \neq 1_D$ and $\innerp{F_0}{v} = 1$ (since $F_0 \subseteq F_{k-1}$), and so $\proj_{C_0} F_0 \neq 1_{C_0}$ is not a vertex of $P \cap C_0$. Therefore $F \not\in \Ima \chi_{C_0}^D$ as well, by \cref{prop:lem3} of \Cref{lem:one_step}. In all, for any $C \in \faces{k-1}(D)$, $F \not\in \Ima \chi_C^D$, in contradiction to minimality. Thus $1_D \in P$. 
\end{proof}

As a corollary, we see that for normalized locally anti-blocking polytopes $P$ with the minimal number of flags, $P$ is determined by $\graph{P}$. 

\begin{corollary}\label{cor:graph_determines_P}
    Let $P$ be a normalized locally anti-blocking polytope in $\RR^d$ and assume $|\flags(P)| = 2^d \cdot d!$. Then 
    \[
        P = \calc(\graph{P}) := \bigvee_D 1_D ,
    \]
    where the convex hull runs over all $D \in \phist$ such that $D \subseteq \RR^J$ for some $J \subseteq [d]$ which induces a clique in $\graph{P}$. 
\end{corollary}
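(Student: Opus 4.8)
The plan is to prove both inclusions $P \subseteq \calc(\graph{P})$ and $\calc(\graph{P}) \subseteq P$.

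For the inclusion $\calc(\graph{P}) \subseteq P$: I need to show that each vertex $1_D$ in the definition of $\calc(\graph{P})$ lies in $P$, since $P$ is convex. Here $D \in \phist$ with $D \subseteq \RR^J$ for some clique $J$ of $\graph{P}$. If $\dim D \le 1$ this is immediate since $P$ is normalized (the standard basis vectors $\pm e_i$ lie in $P$ and so does $0$). If $\dim D \ge 2$, I would like to apply Proposition \ref{prop:clique_graph_condition}: it suffices to check $1_C \in P$ for every $C \in \faces{2}(D)$. Every such $C$ is spanned by two signed basis vectors $\pm e_i, \pm e_j$ with $i,j \in J$, so $\{i,j\}$ is an edge of $\graph{P}$, meaning $P \cap \RR^{\{i,j\}}$ is axis-aligned, i.e.\ equals $\square^d \cap \RR^{\{i,j\}}$ by the normalization and \Cref{cl:have_graph}. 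Then $1_C = (\pm e_i) + (\pm e_j)$ is a vertex of that square, hence in $P$. So Proposition \ref{prop:clique_graph_condition} gives $1_D \in P$, and taking the convex hull over all such $D$ yields $\calc(\graph{P}) \subseteq P$.

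For the reverse inclusion $P \subseteq \calc(\graph{P})$: it suffices to show every vertex of $P$ is of the form $1_D$ for some $D$ as above. Let $x$ be a vertex of $P$; since $P$ is locally anti-blocking and normalized, one expects $x \in \{\pm 1, 0\}^d$ — this should follow because $x$ lies in the relative interior of the orthant $\supp_{\phist} x$, and by \Cref{cl:have_graph} applied to the coordinate subspaces $\RR^{\{i,j\}}$, all two-dimensional coordinate sections are $\square^2$ or $\lozenge^2$; an axis-aligned section forces the two coordinates to independently take values in $\{0, \pm 1\}$, while for a diamond section the vertices still have coordinates in $\{0,\pm 1\}$. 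Thus $x = 1_D$ where $D = \supp_{\phist} x$ is the smallest orthant containing $x$, spanned by $\{\sign(x_i) e_i : x_i \neq 0\}$; let $J = \{i : x_i \neq 0\}$. It remains to check $J$ induces a clique in $\graph{P}$, i.e.\ that for all $i \ne j$ in $J$, $P \cap \RR^{\{i,j\}}$ is axis-aligned. Suppose instead $\{i,j\}$ is a non-edge, so $P \cap \RR^{\{i,j\}}$ is a diamond $[a_1,b_1] \vee [a_2,b_2]$; since $P$ is normalized, this diamond is $\lozenge^d \cap \RR^{\{i,j\}}$, whose vertices are $\pm e_i, \pm e_j$. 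But $\proj_{\RR^{\{i,j\}}} x = x_i e_i + x_j e_j$ with both $x_i, x_j$ nonzero equal to $\pm 1$; this point is not in $\lozenge^2$, contradicting that $P \cap \RR^{\{i,j\}} = \proj_{\RR^{\{i,j\}}} P \ni \proj_{\RR^{\{i,j\}}} x$ (using the locally anti-blocking property $\proj_H P = P \cap H$). Hence $J$ is a clique, $x = 1_D$ with $D$ an admissible cone, and $P = \bigvee\{\text{vertices of } P\} \subseteq \calc(\graph{P})$.

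The main obstacle I anticipate is the reverse inclusion, specifically the step showing every vertex of $P$ has coordinates in $\{0, \pm 1\}$: this requires carefully combining the normalization hypothesis with the structure of two-dimensional coordinate sections and the locally anti-blocking identity $\proj_H P = P \cap H$, rather than being a one-line observation. The forward inclusion, by contrast, is essentially a packaging of Proposition \ref{prop:clique_graph_condition} together with the clique condition translating to pairwise edges of $\graph{P}$.
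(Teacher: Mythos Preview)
Your overall plan coincides with the paper's: both prove $\calc(\graph P)\subseteq P$ by invoking Proposition~\ref{prop:clique_graph_condition} on each clique-supported $D$, and prove $P\subseteq\calc(\graph P)$ by examining vertices of $P$.

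The organizational difference lies in the reverse inclusion. The paper does \emph{not} first establish that vertices of $P$ lie in $\{0,\pm1\}^d$; that fact is recorded only as a \emph{consequence} of the corollary, in the remark immediately following it. Instead, for a vertex $v$ with support $J$ and $|J|\ge 2$, the paper argues directly that $J$ is a clique: for each $i,j\in J$ the projection $\proj_{\RR^{\{i,j\}}} v\in P$ lies off both coordinate axes, and combined with Claim~\ref{cl:have_graph} this forces $P\cap\RR^{\{i,j\}}=\square^2$ rather than $\lozenge^2$. Once $J$ is a clique, the paper finishes in one line using only normalization:
\[
v\in P\cap\RR^J\subseteq\square^d\cap\RR^J=\bigvee_{D\,:\,\lin D\subseteq\RR^J}1_D\subseteq\calc(\graph P),
\]
with no need to identify $v$ as a specific $1_D$.

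So the obstacle you flag --- pinning down each coordinate of $v$ as $0$ or $\pm1$ --- is a detour you can bypass entirely. Your argument that $J$ is a clique currently \emph{depends} on already knowing $v_i,v_j\in\{\pm1\}$ (so that $\proj_H v\notin\lozenge^2$); if you instead adopt the paper's route (off-axis projection plus Claim~\ref{cl:have_graph} forces the square section) and then trap $v$ inside $[-1,1]^J$, the proof closes without the step you were worried about.
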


\begin{proof}
    Let $D \in \phist$ with $D \subseteq \RR^J$ for some $J \subseteq [d]$ which induces a clique in $\graph{P}$. By definition of $\graph{P}$, for any $C \in \faces{2}(D)$, $1_C \in P$. By \Cref{prop:clique_graph_condition}, $1_D \in P$. Thus the $\supseteq$ inclusion holds.
    Let $v \in \faces{0}(P)$ and set $J \subseteq [d]$ to be the set of nonzero coordinates of $v$. If $|J| = 1$, $v = \pm e_i$ for some $i \in [d]$ and must be in $\calc(\graph{P})$. 
    If $|J| \geq 2$, since $P$ is locally anti-blocking, for any coordinate 2-plane $H \subseteq \RR^J$, $\proj_H v \in P$ is not contained in any of the coordinate axes. Therefore by \Cref{cl:have_graph}, $P \cap H = \square^d \cap H$ is axis-aligned and so the coordinates corresponding to $H$ form an edge in $\graph{P}$. Therefore $J$ induces a clique in $\graph{P}$, meaning $v \in \calc(\graph{P})$ and the $\subseteq$ inclusion also holds.
\end{proof}

\begin{remark}
    As another corollary, we see that any normalized locally anti-blocking polytope that minimizes flags has vertices in $\{\pm 1, 0\}^d$. 
\end{remark}

We also need the following direct analogue of \cite[Lemma 20]{sanyal2023kalai}:

\begin{claim}\label{cl:hanner_characterization}
    Let $P$ be a normalized locally anti-blocking polytope in $\RR^d$ and assume $|\flags(P)| = 2^d \cdot d!$. Then $P$ is a Hanner polytope if and only if $\graph{P}$ is a cograph.
\end{claim}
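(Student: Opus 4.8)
The plan is to prove the two implications separately, using the structural results just established together with Proposition~\ref{prop:graph_calculation} and Lemma~\ref{lem:cograph_characterization}. For the ``only if'' direction, suppose $P$ is a Hanner polytope. Hanner polytopes are built from $[-1,1]^1$ by the operations of polar duality and direct sum (convex hull in orthogonal subspaces), and under normalization generalized Hanner polytopes are exactly Hanner polytopes, so I may induct on $d$. The base case $d=1$ is the single-vertex graph, which is a cograph. For the inductive step, a $d$-dimensional Hanner polytope $P$ with $d\geq 2$ is either $Q^\circ$ or $Q_1\vee Q_2$ for lower-dimensional Hanner polytopes; by the inductive hypothesis $\graph{Q}$ (resp. $\graph{Q_1},\graph{Q_2}$) is a cograph, and then Proposition~\ref{prop:graph_calculation}(1) and~(3) say $\graph{P}$ is either the complement of a cograph or a disjoint union of two cographs, hence a cograph by the recursive definition. (One should note that $\graph{P}$ is well-defined here since a Hanner polytope is in particular a normalized locally anti-blocking polytope minimizing flags, by Theorem~\ref{thm:main_equiv}'s inequality part plus the fact that Hanner polytopes are the conjectured equality case — or more directly, every $2$-dimensional coordinate section of a Hanner polytope is $\square^2$ or $\lozenge^2$.)

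For the ``if'' direction, suppose $\graph{P}$ is a cograph; I want to show $P$ is Hanner, again by induction on $d$. The key tool is Corollary~\ref{cor:graph_determines_P}: $P = \calc(\graph{P})$, so $P$ is determined by $\graph{P}$ alone. By the recursive structure of cographs, $\graph{P}$ is either a single vertex (so $d=1$ and $P=[-1,1]$, Hanner), a disjoint union $\graph{P_1}\sqcup\graph{P_2}$ of two smaller cographs, or the complement of a cograph $G'$. In the disjoint-union case, let $J_1,J_2$ partition $[d]$ according to the two components; I would argue that $P = (P\cap\RR^{J_1})\vee(P\cap\RR^{J_2})$ — this should follow from Corollary~\ref{cor:graph_determines_P} since every clique of $\graph{P}$ lies entirely within one $J_i$, so the cones $D$ appearing in $\calc(\graph{P})$ all sit in $\RR^{J_1}$ or $\RR^{J_2}$. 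Then $\graph{P\cap\RR^{J_i}}$ is the induced subgraph $\graph{P_i}$ by Proposition~\ref{prop:graph_calculation}(2), which is a cograph; each $P\cap\RR^{J_i}$ still minimizes flags by Proposition~\ref{prop:min_downwards}; so by induction each is Hanner and hence $P$ is Hanner. In the complement case, $\graph{P^\circ}$ is $G'$, a cograph, by Proposition~\ref{prop:graph_calculation}(1); one checks $P^\circ$ is again a normalized locally anti-blocking polytope minimizing flags (normalization of $P$ gives $\square^d\supseteq P\supseteq\lozenge^d$, so $\square^d\supseteq P^\circ\supseteq\lozenge^d$, hence $P^\circ$ is normalized; and $P^\circ$ minimizes flags since polar duality preserves the face lattice up to reversal, hence preserves flag count); by induction $P^\circ$ is Hanner, so $P=(P^\circ)^\circ$ is Hanner.

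The main obstacle I anticipate is the disjoint-union step: justifying cleanly that $P = (P\cap\RR^{J_1})\vee(P\cap\RR^{J_2})$ from the combinatorial description, and in particular that the two factors are embedded in orthogonal coordinate subspaces in the way Proposition~\ref{prop:graph_calculation}(3) requires. This needs the observation that no clique of a disconnected graph meets both components, so Corollary~\ref{cor:graph_determines_P} immediately gives $\calc(\graph{P}) = \bigvee_{D\subseteq\RR^{J_1}}1_D \ \vee\ \bigvee_{D\subseteq\RR^{J_2}}1_D = \calc(\graph{P_1})\vee\calc(\graph{P_2})$, and then another application of the corollary to the sections $P\cap\RR^{J_i}$ (which are valid by Proposition~\ref{prop:min_downwards}) identifies each piece. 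A secondary point to handle carefully is the well-definedness of $\graph{\cdot}$ at each stage of both inductions — i.e. confirming the relevant polytope is normalized locally anti-blocking and minimizes flags before invoking Claim~\ref{cl:have_graph} or Corollary~\ref{cor:graph_determines_P} — but this is routine given Proposition~\ref{prop:min_downwards} and the behavior of normalization under sections and duality.
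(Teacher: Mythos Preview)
Your ``only if'' direction matches the paper. Your ``if'' direction has a gap in the complement case: you invoke the inductive hypothesis on $P^\circ$, but $\dim P^\circ = d$, so the induction on $d$ does not apply. The recursive definition of cographs does not by itself give a dimension drop when passing to complements. The easy fix is to use that every cograph on $\geq 2$ vertices is either disconnected or has disconnected complement (a standard consequence of Lemma~\ref{lem:cograph_characterization}); then after at most one application of polar duality you are in the disjoint-union case, where your argument does reduce dimension and goes through as written.

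The paper sidesteps this entirely with a slicker argument for the ``if'' direction: since $P \mapsto \graph{P}$ is injective on normalized flag-minimizing locally anti-blocking polytopes (this is exactly the content of Corollary~\ref{cor:graph_determines_P}), and the recursion in the ``only if'' direction already shows that every cograph arises as $\graph{Q}$ for some Hanner polytope $Q$, injectivity forces $P = Q$. No direct decomposition of $P$ is needed.
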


\begin{proof}
    Consider the map
    \[
        \mathcal{G}: \{\text{Normalized, flag-minimizing locally anti-blocking polytopes}\} \to \{\text{Graphs}\}
    \]
    given by $P \mapsto \graph{P}$. By \Cref{cor:graph_determines_P}, $\mathcal{G}$ is 1-to-1. Therefore it is enough to check that $\mathcal{G}$ maps the subset of Hanner polytopes onto the subset of cographs. By the recursive definition of Hanner polytopes, it is enough to check 3 cases:
    \begin{itemize}
        \item If $d = 1$, $\graph{P}$ is a single-vertex graph, and so it is a cograph.
        \item If $P = Q^\circ$ for some previously constructed Hanner polytope $Q$, then $\graph{Q}$ is a cograph by induction, and so $\graph{P}$ is the complement graph of $\graph{Q}$ by \Cref{prop:graph_calculation} and therefore is also a cograph.
        \item If $P = Q_1 \vee Q_2$ for some proviously constructed Hanner polytopes $Q_1,Q_2$ lying in orthogonal subspaces, then by induction $\graph{Q_1},\graph{Q_2}$ are cographs. By \Cref{prop:graph_calculation}, $\graph{P} = \graph{Q_1} \sqcup \graph{Q_2}$ is also a cograph.
    \end{itemize}
    By definition, all cographs can be constructed this way from a Hanner polytope.
\end{proof}

\begin{claim}\label{cl:is_cograph}
    Let $P$ be a normalized locally anti-blocking $d$-dimensional polytope with $|\flags(P)| = 2^d \cdot d!$ and let $J \subseteq [d]$ with $|J| = 4$. Then $\graph{P}[J]$, the induced graph on the vertex set $J$, is not a path of length 3.
\end{claim}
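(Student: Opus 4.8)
The plan is to reduce to dimension four and then derive a contradiction by computing the flag number of the unique candidate polytope.

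Suppose, for contradiction, that $\graph{P}[J]$ is a path of length $3$ for some $J \subseteq [d]$ with $\abs{J} = 4$, and write $H = \RR^J \cong \RR^4$. By \Cref{prop:min_downwards}, $P \cap H$ is a locally anti-blocking polytope in $H$ with $\abs{\flags(P\cap H)} = 2^4 \cdot 4!$, and it is normalized since $\lozenge^J \subseteq P \cap H \subseteq \square^J$ (as $\lozenge^d \subseteq P \subseteq \square^d$); by \Cref{prop:graph_calculation}(2), $\graph{P \cap H} = \graph{P}[J]$ is a $4$-vertex path, say with edges $\{1,2\},\{2,3\},\{3,4\}$ after relabeling $J = [4]$. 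Thus it suffices to rule out a normalized locally anti-blocking polytope $P \subseteq \RR^4$ with $\abs{\flags(P)} = 2^4\cdot 4! = 384$ whose graph is the path $1\text{-}2\text{-}3\text{-}4$. For such a $P$, \Cref{cor:graph_determines_P} forces
\[
    P = \calc(\graph{P}) = \bigvee\{\,\pm e_1 \pm e_2,\ \pm e_2 \pm e_3,\ \pm e_3 \pm e_4\,\},
\]
because the cliques of the path are exactly its four vertices and its three edges, and the singleton vectors $\pm e_i$ are already convex combinations of the listed $12$ points. So everything reduces to computing $\abs{\flags(P)}$ for this one explicit polytope and seeing that it is not $384$.

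To count the flags I would pass to the polar: polarity is an order-reversing bijection $\faces(P)\to\faces(P^\circ)$, hence $\abs{\flags(P)} = \abs{\flags(P^\circ)}$, and $P^\circ$ has the transparent description
\[
    P^\circ = \{\,x \in \RR^4 : \abs{x_1}+\abs{x_2}\le 1,\ \abs{x_2}+\abs{x_3}\le 1,\ \abs{x_3}+\abs{x_4}\le 1\,\}.
\]
From this $H$-representation I would establish two facts by direct inspection. First, $P^\circ$ has exactly $12$ vertices, namely the three sign-orbits $(\pm1,0,0,\pm1)$, $(\pm1,0,\pm1,0)$, $(0,\pm1,0,\pm1)$: at a vertex, at least one of the coordinate pairs $(x_1,x_2),(x_2,x_3),(x_3,x_4)$ must sit at a vertex of its defining diamond (otherwise fewer than four independent constraints are active), and a short case analysis along the path narrows the options down to these twelve points. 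Second, $P^\circ$ has exactly $12$ facets, namely its $12$ bounding hyperplanes, in two orbits under the symmetry group generated by coordinate sign changes and the reflection $1\leftrightarrow 4,\,2\leftrightarrow 3$: the eight "outer" facets such as $\{x_1+x_2=1\}$, each of which (in coordinates $(x_1,x_3,x_4)$) is the $3$-polytope $\{0\le x_1\le 1,\ \abs{x_3}\le x_1,\ \abs{x_3}+\abs{x_4}\le 1\}$ with $6$ vertices, one quadrilateral $2$-face and six triangular $2$-faces, hence $11$ edges; and the four "middle" facets such as $\{x_2+x_3=1\}$, each a tetrahedron with $6$ edges.

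Granting these, I would conclude with the elementary identity that for a $3$-polytope $G$ one has $\abs{\flags(G)} = 4\abs{\faces{1}(G)}$ (each $2$-face, a polygon with $m$ edges, carries $2m$ vertex-edge chains, and each edge lies in exactly two $2$-faces of $G$), and therefore, summing over facets, $\abs{\flags(Q)} = \sum_{G\in\faces{3}(Q)}\abs{\flags(G)} = 4\sum_{G\in\faces{3}(Q)}\abs{\faces{1}(G)}$ for any $4$-polytope $Q$. Applied to $Q = P^\circ$ this gives $\abs{\flags(P)} = \abs{\flags(P^\circ)} = 4\,(8\cdot 11 + 4\cdot 6) = 448 \ne 384$, the desired contradiction. (As a check, the same identity gives $\abs{\flags(\square^4)} = 4\cdot 8\cdot 12 = 2^4\cdot 4!$.) The only genuine work, and the only part requiring argument rather than bookkeeping, is the verification of the vertex set and of the facial structure of the two facet types of $P^\circ$; the symmetry group of $P$ reduces this to the three vertex orbits and two facet orbits described above. (Alternatively one could try, in the spirit of \Cref{prop:clique_graph_condition}, to exhibit a single flag of $P$ lying in no image $\Ima\chi_C^D$, but the direct count seems cleanest.)
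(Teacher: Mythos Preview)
Your proposal is correct and follows essentially the same route as the paper: reduce to $H=\RR^J$ via \Cref{prop:min_downwards}, identify $P\cap H=\calc(\Pi_3)$ via \Cref{cor:graph_determines_P} and \Cref{prop:graph_calculation}, and then compute $\abs{\flags(\calc(\Pi_3))}=448\neq 384$ to get the contradiction. The only difference is cosmetic: the paper sums flags over the $12$ vertices of $\calc(\Pi_3)$ using the vertex-figure/dual-facet correspondence, whereas you first pass to $P^\circ$ and sum over its $12$ facets; both viewpoints reduce to the identical pair of $3$-polytope computations (eight facets with $11$ edges, four tetrahedra) and the formula $\abs{\flags(G)}=4\abs{\faces{1}(G)}$, giving $4(8\cdot 11+4\cdot 6)=448$.
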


\begin{proof}
    Without loss of generality, $J = \{1,2,3,4\}$, and set $H = \RR^J$ to be the coordinate subspace spanned by the corresponding standard basis vectors. Assume towards contradiction that $\graph{P}[J]$ is the graph $\Pi_3$ whose edges are $\{(1,2),(2,3),(3,4)\}$. By \Cref{cor:graph_determines_P,prop:graph_calculation}, $P \cap H = \calc(\graph{P}[J]) = \calc(\Pi_3)$. A calculation (carried out in \ref{appendix}) gives $|\flags(\calc(\Pi_3))| = 448 > 384 = 2^4 \cdot 4!$, which is a contradiction to \Cref{prop:min_downwards}.
\end{proof}

We have now gathered all the components to prove the equality case in \Cref{thm:main_equiv}.

\begin{proof}[Proof of the equality part of \Cref{thm:main_equiv}]
    Assume $|\flags(P)| = 2^d \cdot d!$. By \Cref{cl:have_graph}, the graph $\graph{P}$ is defined, and by \Cref{cl:is_cograph} and the characterization \Cref{lem:cograph_characterization} it is a cograph. By \Cref{cl:hanner_characterization} we get that $P$ is a Hanner polytope. 
\end{proof}

\appendix
\gdef\thesection{Appendix \Alph{section}}
\section{Flag number calculation for $\calc(\Pi_3)$}\label{appendix}

By definition, $\calc(\Pi_3)$ has vertices
\begin{equation}\label{eq:vectors}
    \begin{pmatrix}
        \pm 1 \\ \pm 1 \\ 0 \\ 0
    \end{pmatrix} , 
    \begin{pmatrix}
        0 \\ \pm 1 \\ \pm 1 \\ 0
    \end{pmatrix} ,
    \begin{pmatrix}
        0 \\ 0 \\ \pm 1 \\ \pm 1
    \end{pmatrix} .
\end{equation}

By \Cref{prop:graph_calculation}, $\calc(\Pi_3)^\circ$ has vertices
\begin{equation*}\label{eq:vectors_dual}
    \begin{pmatrix}
        \pm 1 \\ 0 \\ 0 \\ \pm 1
    \end{pmatrix}, 
    \begin{pmatrix}
        0 \\ \pm 1 \\ 0 \\ \pm 1
    \end{pmatrix} ,
    \begin{pmatrix}
        \pm 1 \\ 0 \\ \pm 1 \\ 0
    \end{pmatrix} .
\end{equation*}
For any vertex $v$ of \eqref{eq:vectors}, its vertex figure has the same combinatorial structure as the dual facet of $\calc(\Pi_3)^\circ$ to $v$, see \cite[Section 2]{ziegler1993lectures}. Therefore the number of flags containing $v$ is the number of flags of the dual facet. For vertices in the first and third columns of \eqref{eq:vectors}, the dual facet is an affine image of the polytope with vertices
\[
    \begin{pmatrix}
        \pm 1 \\ 0 \\ \pm 1
    \end{pmatrix} ,
    \begin{pmatrix}
        1 \\ \pm 1 \\ 0
    \end{pmatrix} ,
\]
see \Cref{fig:1}.

Note that every edge of a 3-dimensional polytope contains exactly 2 vertices and is contained in exactly 2 facets. Therefore there are exactly 4 flags that contain every edge, and so the above polytope has exactly 44 flags. For vertices of \eqref{eq:vectors} of the second column, the dual facet is an affine image of the polytope with vertices
\[
    \begin{pmatrix}
        0 \\ 1 \\ \pm 1
    \end{pmatrix},
    \begin{pmatrix}
        \pm 1 \\ -1 \\ 0
    \end{pmatrix} ,
\]
which is simply a 3-dimensional simplex, and so has 24 flags. In all, there are $8 \cdot 44 + 4 \cdot 24 = 448$ flags in $\calc(\Pi_3)$.

\begin{figure}\centering
\begin{tikzpicture}[scale=2,line join=bevel]
    \coordinate (A1) at (1,0,1);
    \coordinate (A3) at (1,0,-1);
    \coordinate (A2) at (1,1,0);
    \coordinate (A4) at (1,-1,0);
    \coordinate (B1) at (-1,0,1);
    \coordinate (B2) at (-1,0,-1);
    
    \draw (A1) -- (A2) -- (A3) -- (A4) -- cycle;
    \draw (A1) -- (A2) -- (B1) -- cycle;
    \draw (A2) -- (A3) -- (B2) -- cycle;
    \draw (A3) -- (A4) -- (B2) -- cycle;
    \draw (A4) -- (A1) -- (B1) -- cycle;
    \draw (B1) -- (B2) -- (A2) -- cycle;
    \draw (B1) -- (B2) -- (A4) -- cycle;
    \draw [fill opacity=0.7,fill=green!80!black] (A1) -- (A2) -- (A3) -- (A4) -- cycle;
    \draw [fill opacity=0.7,fill=orange!80!black] (A4) -- (A1) -- (B1) -- cycle;
    \draw [fill opacity=0.7,fill=blue!80!black] (A1) -- (A2) -- (B1) -- cycle;
    \draw [fill opacity=0.7,fill=purple!80!black] (B1) -- (B2) -- (A2) -- cycle;
\end{tikzpicture}\caption{The dual facet for vertices in the first and third columns.}\label{fig:1}
\end{figure}
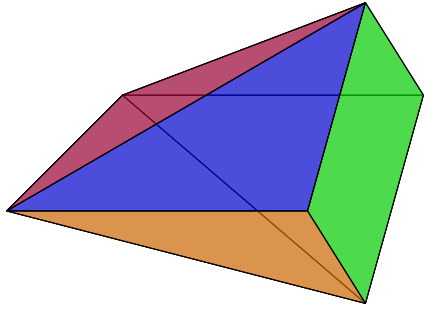

\bibliographystyle{plain}
\bibliography{refs.bib}

{\small
\noindent School of Mathematical Sciences, Tel Aviv University, Ramat
Aviv, Tel Aviv, 69978, Israel.\vskip 2pt
\noindent Email: arnonchor@gmail.com.
}

\end{document}